\newtheorem{theorem}{Theorem}
\newtheorem{corollary}[theorem]{Corollary}
\newtheorem{lemma}[theorem]{Lemma}
\newtheorem{proposition}[theorem]{Proposition}
\newtheorem{remark}[theorem]{Remark}
\newcommand{\wfin}{ ${\textstyle \Box}$    \vspace{3mm} }
\title{A mixed finite elements approximation of inverse source problems for the wave equation with variable coefficients using observability}
\author{Carlos \textsc{Castro} \thanks{Departamento de Matem\'atica e Inform\'atica Aplicadas a la Ingeniería Civil y Naval, Universidad Polit\'ecnica de Madrid, Madrid, 28040, Spain. E-mail: {\tt carlos.castro@upm.es}}
\and 
Sorin \textsc{Micu}\thanks{Department of Mathematics, University of
Craiova, 200585 and Gheorghe Mihoc-Caius Iacob Institute of Mathematical Statistics and Applied Mathematics of the Romanian Academy, 050711, Bucharest, Romania. E-mail: {\tt sd$\_$micu@yahoo.com}.} 
}
\begin{document}

\maketitle

\abstract{We consider an inverse problem for the linear one-dimensional wave equation with variable coefficients consisting in determining an unknown source term from a boundary observation. A method to obtain approximations of this inverse problem using a space discretization based on a mixed finite element method is proposed and analyzed.
Its stability and convergence properties relay on a new uniform boundary observability inequality with respect to the discretization parameter. This fundamental uniformity property of the observation is not verified in the case of more usual discretization schemes, such as centered finite differences or classical finite elements. For the mixed finite elements it is proved by combining lateral energy estimates with Fourier techniques.
}

\

\noindent {\bf Keywords.} Inverse source problem, boundary observability, wave
equation with variable coefficients, mixed finite elements.

\

\noindent{\bf MSC2020.} 35R30, 93B07, 65M60.

\section{Introduction}

We consider the one dimensional wave equation with a given potential $a\in L^\infty(0,1)$ and homogeneous Dirichlet boundary conditions,
\begin{equation}\label{eq.wave}
\left\{ \begin{array}{ll} u''(t,x)-u_{xx}(t,x)+a(x)u(t,x)= 0&
t\in(0,T),\,\, x\in(0,1)\\ u(t,0)=u(t,1)=0\,\, & t\in(0,T)\\
u(0,x)=u^0(x),\,\, u'(0,x)=u^1(x)& x\in(0,1),\end{array}\right.
\end{equation}
where $\begin{pmatrix}u^0\\u^1\end{pmatrix}$ are the initial data. For such low regularity potential function, the following observability inequality is proved in \cite{Z}: given $T>2$, there exists a constant $C$ such that 
\begin{equation} \label{eq_observab}
\int_0^T |u_x(t,0)|^2 \,{\rm d}t \geq C\left\| \begin{pmatrix}u^0\\u^1\end{pmatrix} \right\|_{H^1_0 \times L^2}^2\qquad \left( \begin{pmatrix}u^0\\u^1\end{pmatrix}\in H^1_0(0,1)\times L^2(0,1)\right).
\end{equation}
The above relation roughly states that the norm of the initial data can be estimated from the solution at one extremity of the interval $(0,1)$ for sufficiently large time $T$. 

As remarked in \cite{PI}, the observability inequality \eqref{eq_observab} can be used to study the stability property of some inverse problems (see also  \cite{ASTT, ACT2, IY,KY}). To illustrate this application, we consider the following inverse source problem: given $T>0$, $\begin{pmatrix}w^0\\w^1\end{pmatrix}\in H^2 (0,1)\cap H^1_0(0,1)\times H^1_0(0,1)$ and $\lambda\in H^1(0,T)$ such that $\lambda(0)\neq 0$ determine the function $f \in L^2(0,1)$ from the observation $w_x'(t,0)$  in the time interval $(0,T)$ of the solution to the following equation
\begin{equation}\label{eq:inv_cont0}
\left\{ \begin{array}{ll} w''(t,x)-w_{xx}(t,x)+a(x)w(t,x)= \lambda(t)f(x)&
t\in(0,T),\,\, x\in(0,1)\\ w(t,0)= w(t,1)=0\,\, & t\in(0,T)\\
w(0,x)=w^0(x),\,\, w'(0,x)=w^1(x)& x\in(0,1).\end{array}\right.
\end{equation}
Estimate \eqref{eq_observab} implies that the following stability result holds if $T>2$: there exists a constant $C>0$ such that 
\begin{equation}\label{eq:stabcont}
\left\|\widehat{f}-\widetilde{f}\right\|_{L^2(0,1)}\leq C \left\|\widehat{w}_x'(\,\cdot\,, 0)-\widetilde{w}_x'(\,\cdot\,, 0)\right\|_{L^2(0,T)},
\end{equation}
for any $\widehat{f},\,\widetilde{f}\in L^2(0,1)$, where $\widehat{w},\,\widetilde{w}$ are the corresponding solutions of \eqref{eq:inv_cont0} with non-homo\-ge\-neous terms $\lambda \widehat{f}$ and  $\lambda \widetilde{f}$, respectively. Inequality \eqref{eq:stabcont} shows, in particular, that the unknown source $f$ can be uniquely determined by the observation term $w_x'$ if the time $T$ is long enough and it is fundamental in any reconstruction method of $f$.

The aim of this paper is to give a convergent algorithm to find the numerical approximation of the source term $f$ when the boundary observation $w_x'(t,0)$ is known in the time interval $(0,T)$. The basic idea, often used in this context, is to minimize the functional ${\mathcal J}:L^2(0,T)\rightarrow \mathbb{R}$, defined by 
\begin{equation}\label{eq:mincont}
{\mathcal J}(\widehat{f})=\frac{1}{2}\left\| \widehat{w}_x'(\,\cdot\,,0)-w_x'(\,\cdot\,,0)\right\|^2_{L^2(0,T)},
\end{equation} 
where $\widehat{w}$ is the solution of  \eqref{eq:inv_cont0} with non-homo\-ge\-neous term $\lambda \widehat{f}$. Note that the unknown source term $f$ is a minimizer of the functional $\mathcal J$. Moreover, the stability condition \eqref{eq:stabcont} ensures the uniqueness of the minimizer of $\mathcal J$. 

When we are interested in the numerical approximations for inverse problems a discrete version of the functional $\mathcal J$ must be considered. This requires in particular to substitute the continuous equation \eqref{eq:inv_cont0} by a consistent discrete approximation in the space variable, depending on a discretization parameter $h$ tending to zero. To show the associated stability result for the approximate system a discrete version of  \eqref{eq_observab} is needed. Naturally, in the discrete context, the constant $C$ in the right-hand member of \eqref{eq_observab} will depend on the discretization parameter $h$ and will be denoted in the following by $C_h$. It is important to find such discrete version of (\ref{eq_observab}) with a constant $C_h$ uniformly bounded from below in $h$. Indeed, this condition is essential in proving the convergence of the reconstruction algorithm. 

It turns out that this is not the case when considering usual discretization of the wave equation based on finite differences or classical finite elements. This has been observed by a number of authors in different situations (see \cite{EZb} and the references therein for a complete description of this phenomenon and  some concrete examples). More precisely, in these cases the constant $C_h$ in the associated discrete inverse inequality tends to zero as $h\to 0$. 

There are several cures proposed in the literature to deal with this lack of uniformity of the constant $C_h$ with respect to $h$. Among the techniques used in this context we mention Tychonoff regularization \cite{GLL,GL}, bi-grid \cite{IgZ}, space-time methods \cite{NC1,NC2,NC3}, filtering techniques \cite{IZ,micu,LR}, vanishing viscosity method \cite{micu_siam}, etc. We also mention the recent volumes \cite{NumCont} for an overview of the state of the art in the application of these and other related methods.

In \cite{CM,CMM} a numerical method based on a mixed finite element formulation for the wave equation, firstly proposed in \cite{bi} for  stabilization problems, was analyzed. It was shown that the  corresponding semi-discrete version of the observability inequality  holds with a constant $C_h$ uniformly bounded in $h$. This was proved for the constant coefficients wave equation in one dimension and two dimension in a square.  The proof in the one-dimensional case is based on a Fourier series argument and requires a detailed spectral analysis, while the two-dimensional case relies on a discrete version of the classical multipliers method.  In this paper we extend the Fourier series proof in \cite{CM} to the wave equation  with a nonconstant potential. The main difficulty is that, in this case, we do not have an explicit formula for the eigenvalues and eigenfunctions of  neither the continuous or the discrete problem. The basic idea of the proof is to apply Ingham's inequality by showing that the corresponding sequence of discrete eigenvalues have a uniform gap. This and other additionally required spectral properties are obtained by a novel approach combining a contradiction argument with the lateral energy technique used in \cite{Z} to obtain the observability inequality for the continuous case. The precise result is presented in Theorem \ref{te.obsineg} stated in the following section and its complete proof is given in Section \ref{sec:4} below.

The uniform observability estimate shown in Theorem \ref{te.obsineg}  allows us to propose a convergent numerical algorithm for the reconstruction of the source term $f$ in problem  \eqref{eq:inv_cont0} based on a  discretization using the mixed finite elements method combined with the least-squares procedure \eqref{eq:mincont}. The description of the corresponding discrete inverse problem and the results of stability and convergence of the approximating scheme are given in Section \ref{sec:2}, Theorems \ref{te:estab0} and \ref{te:coninv}, respectively. 

Let us mention that other approximation methods for inverse problems and their relation with suitable discrete uniform observability inequalities have been studied in the literature. For instance, in \cite{BE,BBE,BEO} the problem of recovering a potential in the wave equation from the knowledge of the boundary data is analyzed.  However, instead of the mixed finite elemements, the classical finite difference scheme is considered. We have already mentioned that, for this particular numerical scheme, the natural observability inequality is not uniform. Therefore, to restore the uniformity, an extra observation term in the entire spatial domain, vanishing in the limit, is introduced. As mentioned in \cite{BE,BBE,BEO}, this corresponds to a Tychonoff regularization procedure. Let us mention that, in the case of the scheme studied by us, two discrete observation terms appear due to the corresponding non-diagonal mass matrix. One of them converges to the continuous observation term while the other one vanishes when the discretization step $h$ goes to zero. However, unlike the situation analyzed in \cite{BE,BBE,BEO}, the vanishing term is also located at the extremity from which the observation is made.

The reconstruction of a source term from a partial boundary observation is also considered in \cite{NC1,NC2} by a  least-squares technique as in this article. However, unlike in our approach,  \cite{NC1,NC2} deduce an optimality condition by taking the hyperbolic equation as the main constraint of the problem. The reconstruction is based on a numerical approximation by a space-time finite element discretization of this optimality condition which consists on an elliptic formulation in both variables. A related least-squares approach is used in \cite{MT} to approximate the controls for a nonlinear wave equation.

A different method, also related to the observability property but in a less direct way, is used in \cite{HR,RTW} in order to recover the initial state of an infinite-dimensional system from a known output function on a given time interval. By using the uniform observability estimate proved for mixed finite elements in Theorem \ref{te.obsineg} below,  the minimization  procedure from our paper can be adapted to solve this problem, too. In contrast to this, the method studied in \cite{HR,RTW}  consists on an   iterative algorithm based on Russell's principle which asserts that, for an operator group, forward and backward stabilizability implies exact controllability.

The rest of the paper is divided as follows. In Section 2 we introduce the mixed finite element discretization of (\ref{eq.wave}) and \eqref{eq:inv_cont0}, stating the main results. In Section 3 we prove some spectral properties for the associated discrete operator. Section 4 is devoted to the main observability result, uniform with respect to the discretization parameter. In Sections 5 we show the stability and convergence results concerning the inverse source problem.  We present some numerical experiments in the following section. Finally, the Appendix contains convergence results in the energy space for  the mixed finite elements scheme used in the proofs.

\section{A mixed finite element method and main results}\label{sec:2}

We first introduce a space semi-discretization using a mixed finite element method of the following more general  one-dimensional linear wave equation:
\begin{equation}\label{eq.wave_ap1}
\left\{ \begin{array}{ll} w''(t,x)-w_{xx}(t,x)+a(x)w(t,x)= g(t,x)&
t\in(0,T),\,\, x\in(0,1)\\ w(t,0)=w(t,1)=0\,\, & t\in(0,T)\\
w(0,x)=w^0(x),\,\, w'(0,x)=w^1(x)& x\in(0,1).\end{array}\right.
\end{equation} Let us consider $N\in \mathbb{N}^\ast$, $h=\frac{1}{N+1}$ and an uniform grid of the
interval $(0,1)$ given by $0=x_0<x_1<...<x_N<x_{N+1}=1$, with
$x_j=jh$, $0\leq j\leq N+1$. We use two families of basis functions $\{\varphi_j\}_{j=1}^N$ and $\{\psi_j\}_{j=1}^N$ where $\varphi_j,\psi_j:[0,1]\rightarrow
\mathbb{R}$ are given by
$$\varphi_j(x)=\left\{\begin{array}{cc} \frac{x-x_{j-1}}{h} & \mbox{ if
} x\in [x_{j-1},x_j] \\\frac{x_{j+1}-x}{h} & \mbox{ if } x\in
[x_{j},x_{j+1}] \\ 0 & \mbox{ otherwise} ,\end{array}\right.\quad
\psi_j(x)=\left\{\begin{array}{cc} \frac{1}{2} & \mbox{ if } x\in
(x_{j-1},x_{j+1}) \\ 0 & \mbox{ otherwise.} \end{array}\right.
$$
The main idea in a mixed finite element method is to approximate each component of the solution $\begin{pmatrix}w\\w'\end{pmatrix}$ to  \eqref{eq.wave_ap1} by linear combinations of the two different basis functions above as follows:   
\begin{equation}\label{eq.approx}
 w(t,x)\approx w_h(t,x):=\sum_{j=1}^{N} w_j(t) \varphi_j(x) ,\qquad 
 w'(t,x)\approx v_h(t,x):=\sum_{j=1}^{N} w'_j(t) \psi_j(x),
\end{equation}
for some coefficients $w_j(t)$ (see, for instance, \cite{bf}). Accordingly, the initial data $\begin{pmatrix}w^0\\w^1\end{pmatrix}$ is discretized as follows:
\begin{equation}\label{eq.approxinitdata}
 w^0(x)\approx w_h^0(x):=\sum_{j=1}^{N} w_j^0 \varphi_j(x) ,\qquad 
 w^1(x)\approx w_h^1(x):=\sum_{j=1}^{N} w_j^1 \psi_j(x).
\end{equation}
Observe that in (\ref{eq.approx}), the classical linear splines approximate the position $w$, whereas discontinuous piecewise constant functions are used for the velocity $w'$. Moreover, we considered a discrete nonhomogeneous term $g_h$ given by 
$$g(t,x)\approx g_h(t,x):=\sum_{j=1}^N g_j(t)\psi_j(x).$$
System (\ref{eq.wave}) is discretized in the following way:
\begin{equation}\label{eq.waverew1}
\left\{\begin{array}{lll}
\displaystyle \frac{d}{\,{\rm d}t}\int_0^1 w_h(t,x) \psi_j(x) \,{\rm d}x = \int_0^1 v_h(t,x)
\psi_j(x) \,{\rm d}x,  \mbox{ for all }1\leq j\leq
N,\\
\\  \displaystyle \frac{d}{\,{\rm d}t}<v_h(t,\,\cdot\,), \varphi_j>_{H^{-1},H^1_0} =
\int_0^1
(w_h)_x(t,x) (\varphi_j)_x(x) \,{\rm d}x - \\  \qquad \qquad  \displaystyle \int_0^1
a(x)w_h(t,x) \varphi_j(x) \,{\rm d}x +\int_0^1
g_h(t,x) \varphi_j (x) \,{\rm d}x , \mbox{ for all }1\leq j\leq N, \\ \\ 
w_h(0,x)=w^0_h(x),\,\, v_h(0,x)=w^1_h(x), \mbox{ for }x\in (0,1).
\end{array}
\right.
\end{equation}

By taking into account that, for any $1\leq i,j\leq N$,
$$<\psi_i, \varphi_j>_{H^{-1},H^1_0}=\int_0^1 \psi_i (x) \varphi_j(x) \,{\rm d}x
=\int_0^1 \psi_i (x) \psi_j(x) \,{\rm d}x =\left\{\begin{array}{ll}
\frac{h}{2} &\mbox{ if }i=j \\
\frac{h}{4} &\mbox{ if }|i-j|=1 \\
0 &\mbox{ othewise, }\end{array} \right.$$ and $$\int_0^1
(\varphi_i)_x (x) (\varphi_j)_x(x) \,{\rm d}x =\left\{\begin{array}{ll}
\frac{2}{h} &\mbox{ if }i=j \\
-\frac{1}{h} &\mbox{ if }|i-j|=1 \\
0 &\mbox{ othewise, }\end{array} \right.$$ 
and by considering an approximation of the potential term with a trapezoidal scheme 
\begin{equation}\label{eq:trap}\int_0^1 a(x) \varphi_i(x) \varphi_j(x)\approx \left\{
\begin{array}{ll}  0 & \mbox{ if }i\neq j \\
h a(x_j):=h a_j & \mbox{ if }i=j,\end{array}\right.\end{equation}
we obtain that (\ref{eq.waverew1}) is equivalent with the following system 
\begin{equation}\label{eq.dis0} \left\{\begin{array}{ll}
\vspace{2mm}

\displaystyle
\frac{h}{4}\left[2w''_j(t)+w''_{j+1}(t)+w''_{j-1}(t)\right]\\

\vspace{2mm}

\displaystyle
+\frac{1}{h}\left[2w_j(t)-w_{j+1}(t)-w_{j-1}(t)\right]+h a_j
w_j(t)
 =\widetilde{g}_j(t),&\mbox{ for }1\leq j\leq N,\,\,\, t>0,\\
w_0(t)=0,\,\, w_{N+1}(t)=0, &\mbox{ for }t>0,\\
w_j(0)=w^0_j,\,\, w'_j(0)=w^1_j,&\mbox{ for }1\leq j\leq N,
\end{array}
\right.
\end{equation}
where $\widetilde{g}_j(t)=\frac{h}{4}\left(2g_j(t)+g_{j+1}(t)+g_{j-1}(t) \right)$, $1\leq j\leq N$.

System (\ref{eq.dis0}) consists of $N$ linear differential
equations with $N$ unknowns $w_1, w_2,\break \ldots,w_N$.  In \eqref{eq.dis0} and in the sequel, given any $W=\left[w_j\right]_{1\leq j\leq N}\in \mathbb{C}^{N}$, by convention,  we consider that  $w_0=w_{N+1}=0$. It is convenient to write system (\ref{eq.dis0}) in an equivalent matrix form. Consider the matrices $ K_h,M_h,L_h \in {\cal M}_{N\times N}(\mathbb{R})$ and the vectors $G_h(t),W_h(t)\in \mathbb{C}^N$ defined as follows: {\small $$K_h=\frac{1}{h}
\left(\begin{array}{ccccccc}
2&-1&0&0&...&0&0\\
-1&2&-1&0&...&0&0\\
0&-1&2&-1&...&0&0\\
...&...&...&...&...&...&...\\
0&0&0&0&...&2&-1\\
0&0&0&0&...&-1&2
\end{array}\right),\, M_h=\frac{h}{4}
\left(\begin{array}{ccccccc}
2&1&0&0&...&0&0\\
1&2&1&0&...&0&0\\
0&1&2&1&...&0&0\\
...&...&...&...&...&...&...\\
0&0&0&0&...&2&1\\
0&0&0&0&...&1&2
\end{array}\right),$$
$$
L_h=h\left(\begin{array}{ccccccc}
a_1&0&0&0&.....&0&0\\
0&a_2&0&0&.....&0&0\\
0&0&a_3&0&.....&0&0\\
...&...&...&...&.....&...&...\\
0&0&0&0&.....&a_{N-1}&0\\
0&0&0&0&.....&0&a_N
\end{array}\right), \, G_h= \left(\begin{array}{c}
g_1\\
g_2\\
g_3\\
...\\
g_{N-1}\\
g_N
\end{array}\right) , \,W_h= \left(\begin{array}{c}
w_1\\
w_2\\
w_3\\
...\\
w_{N-1}\\
w_N
\end{array}\right) .
$$
}

Note that the components $\left[g_j\right]_{1\leq j\leq N}$ and $\left[w_j\right]_{1\leq j\leq N}$ of the vectors $G_h$ and $W_h$, respectively, depend on $h$ but, in order to keep the notation as simple as possible, we have chosen to not make explicit this dependence.  The semi-discrete system (\ref{eq.dis0}) can be written as
\begin{equation}\label{eq.general}
\left\{\begin{array}{ll}
M_h W_h''(t)+K_h W_h(t)+L_h W_h(t)=M_h G_h(t)\qquad (t\in (0,T))\\ 
W_h(0)=W_h^0,\,\, W'_h(0)=W_h^1.
\end{array}
\right.
\end{equation}
The unknown of (\ref{eq.general}) is the vector-valued function
$t\to W_h(t)\in\mathbb{C}^N$.  In the Appendix we study the convergence of  the solution of \eqref{eq.general} to the solution of the continuous wave equation \eqref{eq.wave_ap1}, under the hypothesis $a\in C[0,1]$, $\begin{pmatrix}w^0\\w^1\end{pmatrix} \in H_0^1(0,1)\times L^2(0,1)$ and $f\in L^2(0,T;L^2(0,1))$.

To state the main results we consider, besides the canonical inner product $\langle \,\cdot\,,\,\cdot\,\rangle $ (with its corresponding norm $\|\,\cdot\,\|$) defined in $\mathbb{C}^N$, two additional ones given by
\begin{equation}\label{eq.inprod2}
\displaystyle \langle U,W\rangle_1=\langle K_h U,W\rangle +\langle L_h
U,W\rangle,
\end{equation}
\begin{equation}\label{eq.inprodM} \displaystyle \langle
U,W\rangle_M=\langle M_h U,W\rangle,
\end{equation}
where $U,\,W\in\mathbb{C}^{N}$. Note that (\ref{eq.inprod2}) defines an inner product only if the matrix $K_h+L_h$ is positive defined. This is true when the diagonal values $(a_k)_{1\leq k\leq N}$ of the matrix $L_h$ are nonnegative or have sufficiently small absolute values. In order to avoid technical details, we will assume that this is the case by considering nonnegative potential functions 
\begin{equation}
\label{eq:pot}
a\in L^\infty(0,1),\quad a(x)\geq 0\qquad (x\in [0,1]).
\end{equation}
The corresponding norms to the inner products \eqref{eq.inprod2} and \eqref{eq.inprodM} will be denoted by $||\,\, \cdot \,\,||_1$ and $||\,\, \cdot \,\,||_M$, respectively.

Our first main result concerns the uniform observability of  the discrete version of equation \eqref{eq.wave} based on the mixed finite elements method described above.

\begin{theorem}\label{te.obsineg} Assume that \eqref{eq:pot} holds.  There exist two constants $T_0,\, \kappa_0>0$, independent of $h$, such that, for any $T> T_0$, the following inequality is verified:
\begin{equation}\label{eq:obsineq}
\int_0^{T} \left(\left|\frac{u_{1}(t)}{h}\right|^2+ \left|\frac{u_{1}'(t)}{2}\right|^2\right)\,{\rm d}t \geq \kappa_0 \left(\left\| U^0_h\right\|_1^2 +\left\| U^1_h\right\|_{M}^2\right)\qquad \left( U^0_h,\, U^1_h \in\mathbb{C}^{N}\right),
\end{equation} where $U_h=\left[u_{j}\right]_{1\leq j\leq N}$ is the solution of  the discrete homogeneous system 
\begin{equation}\label{eq.generalhom}
\left\{\begin{array}{ll}
M_h U_h''(t)+K_h U_h(t)+L_h U_h(t)=0 \qquad (t\in (0,T))\\ 
U_h(0)=U_h^0,\,\, U'_h(0)=U_h^1.
\end{array}
\right.
\end{equation}
\end{theorem}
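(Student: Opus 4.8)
The plan is to diagonalize the semi-discrete dynamics and reduce \eqref{eq:obsineq} to two uniform spectral properties, which are then assembled by means of Ingham's inequality in the time variable. Assumption \eqref{eq:pot} makes $M_h$ and $K_h+L_h$ symmetric and positive definite, so the generalized eigenvalue problem
\begin{equation*}
(K_h+L_h)\Phi_k^h=\lambda_k(h)^2\, M_h\Phi_k^h,\qquad 1\le k\le N,
\end{equation*}
has eigenvalues $0<\lambda_1(h)^2<\dots<\lambda_N(h)^2$ and eigenvectors $\{\Phi_k^h\}_{k=1}^N$ that may be chosen $M_h$-orthonormal, so that $\|\Phi_k^h\|_M=1$ and $\|\Phi_k^h\|_1=\lambda_k(h)$. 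Expanding the solution of \eqref{eq.generalhom} as $U_h(t)=\sum_{k=1}^N\bigl(a_k\cos(\lambda_k(h)t)+b_k\sin(\lambda_k(h)t)\bigr)\Phi_k^h$, the right-hand side of \eqref{eq:obsineq} is exactly $\sum_{k=1}^N\lambda_k(h)^2\bigl(|a_k|^2+|b_k|^2\bigr)$, whereas the two boundary observations are trigonometric sums in which the $k$-th term carries the first component $\phi_{1,k}^h$ of $\Phi_k^h$. After rewriting these sums over the symmetric set of frequencies $\{\pm\lambda_k(h)\}$, inequality \eqref{eq:obsineq} will follow once I establish the two facts below.

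First, I would prove a \emph{uniform spectral gap}: there exist $\gamma_0>0$ and $h_0>0$, independent of $h$, with $\lambda_{k+1}(h)-\lambda_k(h)\ge\gamma_0$ for every $k$ and every $h<h_0$ (note that $\lambda_1(h)$ is also bounded below, so the whole symmetric family is uniformly separated). Second, I would prove a \emph{uniform observability of the eigenvectors}: there is $c_0>0$, independent of $h$ and $k$, such that
\begin{equation*}
\frac{|\phi_{1,k}^h|^2}{h^2}+\frac{\lambda_k(h)^2\,|\phi_{1,k}^h|^2}{4}\ \ge\ c_0\,\lambda_k(h)^2 .
\end{equation*}
Granting the gap, Ingham's inequality provides, for $T>T_0:=2\pi/\gamma_0$, a constant $C_T>0$ (depending only on $T$ and $\gamma_0$, not on $N$) bounding each exponential sum from below by the sum of the squares of its coefficients. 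Applying it separately to $u_1/h$ and to $u_1'/2$ and adding yields a lower bound of the form $C_T\sum_k(|a_k|^2+|b_k|^2)\bigl(|\phi_{1,k}^h|^2/h^2+\lambda_k(h)^2|\phi_{1,k}^h|^2/4\bigr)$; the eigenvector estimate then turns the bracket into $c_0\lambda_k(h)^2$, recovering the energy and hence \eqref{eq:obsineq} with $\kappa_0=c_0C_T/2$.

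The main obstacle is the proof of the two spectral properties, which is genuinely harder here than in \cite{CM} because the variable potential precludes any closed form for $\lambda_k(h)$ and $\Phi_k^h$. I would argue by contradiction. If the gap (resp.\ the eigenvector estimate) were not uniform, there would be a sequence $h_n\to0$ and indices $k_n$ along which $\lambda_{k_n+1}(h_n)-\lambda_{k_n}(h_n)\to0$ (resp.\ the normalized boundary quantity tends to $0$). For frequencies that remain bounded, the spectral convergence of the scheme sends the interpolated eigenvectors to eigenfunctions of the continuous operator; as the latter are simple and have non-vanishing normal traces by \eqref{eq_observab}, both degeneracies are excluded in this regime. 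The delicate regime is that of high frequencies $\lambda_k(h)\sim h^{-1}$, which have no continuous counterpart; there I would multiply the discrete eigenvalue equation by a discrete analogue of the multiplier $x\,\partial_x$ and sum by parts, following the lateral (sidewise) energy technique of \cite{Z}, to obtain identities bounding the boundary data $\phi_{1,k}^h/h$ and $\lambda_k(h)\phi_{1,k}^h$ from below by the interior energy with remainders controlled uniformly in $h$. The crucial point making this succeed is the favourable dispersion of the mixed finite element scheme: in the constant-coefficient case one finds explicitly $\lambda_k(h)=\tfrac{2}{h}\tan(k\pi h/2)$, whose consecutive differences stay bounded below (indeed grow) up to the top of the spectrum, in sharp contrast with centered finite differences, where the gap collapses as $k\to N$ and observability is lost. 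The contradiction argument is precisely the device that transfers this robustness to the variable-coefficient setting.
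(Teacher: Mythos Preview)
Your reduction to Ingham plus the two spectral properties (uniform gap and uniform lower bound on the boundary trace of eigenvectors) is exactly the skeleton used in the paper. The difference, and the place where your proposal is incomplete, is in how the uniform gap is actually obtained.

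You propose a contradiction argument with a dichotomy: for eigenvalues that stay bounded, pass to the continuous limit; for high frequencies $\lambda_k(h)\to\infty$, use a stationary multiplier on the eigenvalue equation. The stationary multiplier does yield the eigenvector lower bound (in fact the paper's Lemma~\ref{te.obsing} uses precisely that multiplier for the companion \emph{upper} bound, and the identity it produces can be read both ways). However, a multiplier identity for a \emph{single} eigenvector cannot by itself produce a lower bound on $\lambda_{k+1}(h)-\lambda_k(h)$: it gives information on $\phi^h_{1,k}$, not on how two neighbouring eigenvalues relate. Your sentence ``the contradiction argument is precisely the device that transfers this robustness'' does not explain what happens when the offending pair escapes to infinity, where no continuous limit exists; this is the genuine gap in the plan.

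The paper closes this gap by a different mechanism that works uniformly across \emph{all} frequencies, with no regime splitting. It runs the discrete lateral-energy (sidewise) estimate not on eigenvectors but on \emph{time-dependent two-mode} solutions $b^n e^{-i\lambda_h^n t}\Phi_h^n+b^m e^{-i\lambda_h^m t}\Phi_h^m$ (Proposition~\ref{prop:11}). The extra boundary terms that obstruct a naive discrete Gronwall argument are here controlled by a trapezoidal-quadrature lemma (Lemma~\ref{le_c1}), whose hypothesis is precisely that the two frequencies are \emph{close}. This yields a two-mode observability inequality with a constant $C_\tau$ independent of $h,n,m$. A short algebraic lemma (Lemma~\ref{lemma:estgap}, Proposition~\ref{prop:12}) then turns any two-mode observability of this form, combined with the \emph{upper} eigenvector bound of Lemma~\ref{te.obsing}, into an explicit lower bound on $|\lambda_h^n-\lambda_h^m|$. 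Thus the argument is: if the gap were small, two-mode observability holds; but two-mode observability forces the gap to be at least an explicit positive constant. This self-bootstrapping step is the key idea your proposal is missing, and it is what replaces the regime dichotomy.
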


Notice that \eqref{eq:obsineq} is a discrete version of the continuous inequality \eqref{eq_observab}. In the left hand side of  \eqref{eq:obsineq} there are two terms. The first one represents a discrete version of the continuous normal derivative $u_x(t,0)$ from \eqref{eq_observab}. The second one, coming from the non-diagonal mass matrix $M_h$, approximates $u'(t,0)=0$. As already remarked in \cite[Theorem 4.4]{CM} for the case $a\equiv 0$, both terms are necessary in order to ensure the uniformity in $h$ of the constant $\kappa_0$. On the other hand, Theorem \ref{te.obsineg} ensures the existence of a uniform time $T_0\geq 2$. However, the optimal value of $T_0$ is an open problem. 

As a consequence of Theorem \ref{te.obsineg}, we show that the mixed finite elements scheme considered in this paper can be used to approximate numerically the solution $f$ of the inverse source problem  \eqref{eq:inv_cont0}.  Indeed, a convergent sequence of approximations for $f$ will be obtained in Theorem \ref{te:coninv} below by solving a suitable discrete  optimization problem. 

Firstly, let us note that the solution $w$ of \eqref{eq:inv_cont0} verifies the relation $w'=v'+u$, where $v$ and $u$ are solutions of the following two systems
\begin{equation}\label{eq:inv_cont01}
\left\{ \begin{array}{ll} v''(t,x)-v_{xx}(t,x)+a(x)v(t,x)= \lambda(t)f(x)&
t\in(0,T),\,\, x\in(0,1)\\ v(t,0)= v(t,1)=0\,\, & t\in(0,T)\\
v(0,x)=0,\,\, v'(0,x)=0& x\in(0,1),\end{array}\right.
\end{equation}
and
\begin{equation}\label{eq:inv_cont02}
\left\{ \begin{array}{ll} u''(t,x)-u_{xx}(t,x)+a(x)u(t,x)= 0&
t\in(0,T),\,\, x\in(0,1)\\ u(t,0)= u(t,1)=0\,\, & t\in(0,T)\\
u(0,x)=w^1(x),\,\, u'(0,x)=w^0_{xx}(x)-a(x)w^0(x)& x\in(0,1),\end{array}\right.
\end{equation}
respectively. Under the hypothesis $a\in L^\infty(0,1)$, $\lambda\in H^1(0,T)$ and $\begin{pmatrix}w^0\\w^1\end{pmatrix}\in H^2 (0,1)\cap H^1_0(0,1)\times H^1_0(0,1)$, according to the classical regularity results for the wave  equation, we have that $u\in  C\left([0,T];H^1_0(0,1)\right)\cap C^1\left([0,T];L^2(0,1)\right)$ and $v\in C^2\left([0,T];L^2(0,1)\right)\cap C^1\left([0,T];H^1_0(0,1)\right)\cap  C\left([0,T];H^2(0,1)\cap H^1_0(0,1)\right)$. On one hand, this shows that $w'$ is a weak solution of a wave equation and well-known `hidden regularity' results ensure that the observation term $w_x'(\,\cdot\,,0)$ makes sense in $L^2(0,T)$ (see \cite{JLL88}).  On the other hand, this decomposition allows us to work, in the discrete setting, with initial data belonging to the energy space $H^1_0(0,1)\times L^2(0,1)$, which is natural for the mixed finite elements method.

To discretize  \eqref{eq:inv_cont01} and \eqref{eq:inv_cont02} we introduce the vectors $V_h=[v_j]_{1\leq j\leq N}$ and  $U_h=[u_j]_{1\leq j\leq N}$ which verify
\begin{equation}\label{eq.us00}
\left\{
\begin{array}{l}
M_{h}V_h''(t)+K_h V_h(t)+L_hV_h(t)=\lambda(t) M_h F_h,\\
V_h(0)=V'_h(0)=0,
\end{array}
\right.        
\end{equation}
and 
\begin{equation}\label{eq.us000}
\left\{
\begin{array}{l}
M_{h}U_h''(t)+K_h U_h(t)+L_hU_h(t)=0,\\
U_h(0)=U^0_h,\,\, U'_h(0)=U_h^1,
\end{array}
\right.        
\end{equation}
respectively. In \eqref{eq.us00} we have considered an approximation of the source term $f$ given by  $M_hF_h$, where the vector  $F_h=[f_j]_{1\leq j\leq N}\in \mathbb{C}^N$ is defined by the relation \begin{equation}\label{eq:fhdef}
M_hF_h=\left[ \frac{1}{2}\int_{x_{j-1}}^{x_{j+1}}f(x)\,{\rm d}x\right]_{1\leq j\leq N}.
\end{equation}
Moreover, the discrete initial data $U_h^0=[u_j^0]_{1\leq j\leq N}$ and $U_h^1=[u_j^1]_{1\leq j\leq N}$ in \eqref{eq.us000} are given by
\begin{equation}\label{eq:datosdisc}
U_h^0=\left[ w^1(x_j)\right]_{1\leq j\leq N},\quad M_h U_h^1=\left[ \frac{1}{2}\int_{x_{j-1}}^{x_{j+1}}\left(w^0_{xx}(x)-a(x)w^0(x)\right)\,{\rm d}x\right]_{1\leq j\leq N}.
\end{equation}
The discrete approximations \eqref{eq:fhdef} and \eqref{eq:datosdisc} have been chosen to ensure the convergence to the nonhomogeneous term and to the continuous  initial data, respectively. More precisely, we have that 
\begin{align}\label{eq:convnonhomterm}
&\sum_{j=1}^N f_j\psi_j \to f \mbox{ in }L^2(0,1) \mbox{ as }h\to 0,\\
&\sum_{j=1}^N u^0_{j}\varphi_j \to w^1 \mbox{ in }H_0^1(0,1)\quad  \mbox{ and }\quad \sum_{j=1}^N u^1_{j}\psi_j \to w^0_{xx}-aw^0 \mbox{ in }L^2(0,1) \mbox{ as }h\to 0.\label{eq:convdatosdisc}
\end{align}
In this context we are able to prove that $V_h'(t)+U_h(t)$ provides a convergent approximation for the time derivative $w'(t)$ of the solution to equation \eqref{eq:inv_cont0}. Full details concerning all these convergence results will be given in the final Appendix.

Secondly, for $T>0$ we introduce 
\begin{equation}\label{eq:obsdis00}
Y_h(t)=\begin{pmatrix}\smallskip\frac{1}{h}\left(v_{1}'(t)+u_{1}(t)\right) \\  \frac{1}{2}\left(v_{1}''(t)+u_{1}'(t)\right)\end{pmatrix}\qquad (t\in(0,T)),
\end{equation}
which stands as an approximation for the boundary observation \begin{equation}\label{eq:obscon00}y(t)=\left(\begin{matrix}w_x'(t,0)\\0\end{matrix}\right)\qquad (t\in(0,T)),\end{equation} of the continuous inverse problem \eqref{eq:inv_cont0}.  We remark that we have added in the  continuous observation term $y$ a second null component to agree with the discrete observation $Y_h$. As indicated by \eqref{eq:obsineq}, we need the two components of $Y_h$ in order to observe uniformly the solution. 

Now we study the inverse source problem for \eqref{eq.us00} which consists in recovering the discrete unknown  source vector $F_h$ (which is independent of time) from the observation term $y$, assuming that $\left(\begin{matrix}w^0\\w^1\end{matrix}\right)$ and $\lambda$ are given.  The stability of this discrete inverse problem is detailed in the following result inspired in \cite[Theorem 4.3]{ASTT}.
\begin{theorem}\label{te:estab0}
Let $T_0>0$ be given by Theorem \ref{te.obsineg}, $T> T_0$ and $\lambda \in H^1(0,T)$ with  $\lambda(0)\neq 0$. Then, there exists a positive constant $\kappa$, independent of $h$, such that 
\begin{equation} \label{eq:estab0}
\| \widehat{F}_h-\widetilde{F}_h\|_M \leq \kappa\, \| \widehat{Y}_h-\widetilde{Y}_h \|_{[L^2(0,T)]^2},
\end{equation}for any $\widehat{F}_h,\, \widetilde{F}_h\in\mathbb{C}^N$ with the corresponding solutions $\widehat{V}_h,\, \widetilde{V}_h$ of \eqref{eq.us00} and the associated observations  $\widehat{Y}_h,\, \widetilde{Y}_h$ given by  \eqref{eq:obsdis00}. Note that, in spite of the fact that $\widehat{Y}_h$ and $\widetilde{Y}_h$ depend on the solutions of  \eqref{eq.us000},  the difference $\widehat{Y}_h-\widetilde{Y}_h$ is independent of it. 
\end{theorem}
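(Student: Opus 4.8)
The plan is to reduce the stability estimate \eqref{eq:estab0} to the uniform observability inequality \eqref{eq:obsineq} of Theorem \ref{te.obsineg}. The key observation is that the map $F_h \mapsto Y_h$ is affine, so that the difference $\widehat{Y}_h - \widetilde{Y}_h$ depends only on $\widehat{F}_h - \widetilde{F}_h$ through the solution of \eqref{eq.us00}, with the $U_h$-contribution cancelling entirely (this is exactly the final remark in the statement). Setting $D_h = \widehat{F}_h - \widetilde{F}_h$ and letting $Z_h$ solve \eqref{eq.us00} with source $\lambda(t) M_h D_h$, I have
\begin{equation}\label{eq:prop-diff}
\widehat{Y}_h - \widetilde{Y}_h = \begin{pmatrix}\frac{1}{h}z_{1}'(t)\\ \frac{1}{2}z_{1}''(t)\end{pmatrix},
\end{equation}
so it suffices to prove $\|D_h\|_M \leq \kappa\,\|(\tfrac{1}{h}z_1',\tfrac{1}{2}z_1'')\|_{[L^2(0,T)]^2}$ with $\kappa$ independent of $h$.

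The natural strategy, following \cite[Theorem 4.3]{ASTT}, is to differentiate in time. First I would set $P_h(t) = Z_h'(t)$ and observe from \eqref{eq.us00} that $P_h$ satisfies the inhomogeneous equation $M_h P_h'' + (K_h+L_h)P_h = \lambda'(t) M_h D_h$ with initial data $P_h(0) = 0$ and, from the equation at $t=0$, $P_h'(0) = Z_h''(0) = \lambda(0) D_h$ (using $Z_h(0)=Z_h'(0)=0$). The idea is that the homogeneous solution $S_h$ of \eqref{eq.generalhom} with initial data $S_h(0)=0$, $S_h'(0)=\lambda(0)D_h$ carries the information about $D_h$ through observability, while the inhomogeneous part driven by $\lambda'$ is a lower-order perturbation. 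By Duhamel's principle I would write $P_h = \lambda(0) S_h + R_h$, where $R_h$ collects the contribution of $\lambda'$. Applying \eqref{eq:obsineq} to $S_h$ gives
\begin{equation}\label{eq:prop-obs}
\lambda(0)^2\!\int_0^{T}\!\!\left(\left|\tfrac{s_1}{h}\right|^2 + \left|\tfrac{s_1'}{2}\right|^2\right)\!{\rm d}t \geq \lambda(0)^2 \kappa_0 \left(\|S_h(0)\|_1^2 + \|S_h'(0)\|_M^2\right) = \lambda(0)^2\kappa_0\,\|D_h\|_M^2,
\end{equation}
since $S_h(0)=0$. Because $\lambda(0)\neq 0$, this lower-bounds $\|D_h\|_M^2$ by the boundary traces of $S_h = \lambda(0)^{-1}(P_h - R_h)$, which I would then express in terms of the observed quantities $z_1' = p_1$ and $z_1'' = p_1'$ together with the perturbation traces of $R_h$.

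The main obstacle will be controlling the perturbation $R_h$ uniformly in $h$: I must show its boundary traces $\tfrac{1}{h}r_1'$ and $\tfrac{1}{2}r_1''$ are bounded in $L^2(0,T)$ by $C\|D_h\|_M$ with $C$ independent of $h$, and with a constant small enough (or absorbable) so that it does not swallow the good lower bound \eqref{eq:prop-obs}. This requires a uniform discrete hidden-regularity (direct trace) estimate bounding the observation functional of a solution by the energy of its data, which is the dual counterpart of observability and should follow from the same mixed-finite-element energy identities; combined with the Duhamel representation and $\|\lambda'\|_{L^2}$, it yields $\|R_h\|$-type bounds. I would handle the interplay by choosing $T>T_0$ and using the linearity in $D_h$ to set up a final inequality of the form $c_1\|D_h\|_M \leq \|\widehat{Y}_h-\widetilde{Y}_h\|_{[L^2]^2} + c_2\|D_h\|_M$ and then, if needed, a standard compactness–uniqueness or direct absorption argument to remove the lower-order term, giving \eqref{eq:estab0} with $\kappa = 1/(c_1 - c_2)$ once $T$ is large enough.
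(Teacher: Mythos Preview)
Your reduction is correct and in fact coincides with the paper's: your $S_h$ is exactly the paper's $Z_h$ (the homogeneous solution with data $(0,D_h)$), and your formula $P_h=\lambda(0)S_h+R_h$ with $R_h(t)=\int_0^t\lambda'(t-s)S_h(s)\,{\rm d}s$ is precisely the identity $V_h'(t)=\lambda(0)Z_h(t)+\int_0^t\lambda'(t-s)Z_h(s)\,{\rm d}s$ that the paper uses. The gap is in how you close the argument. Your proposed absorption $c_1\|D_h\|_M\le\|\widehat Y_h-\widetilde Y_h\|+c_2\|D_h\|_M$ requires $c_2<c_1$, and there is no mechanism for this: $c_2$ carries a factor $\|\lambda'\|_{L^1(0,T)}$ (or $\|\lambda'\|_{L^2}$) with no smallness assumption, while $c_1$ is fixed by $|\lambda(0)|$ and $\kappa_0$. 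Taking $T$ large does not help, since both constants grow. A compactness--uniqueness argument is also problematic here because you would need it to be \emph{uniform in $h$}, and there is no obvious compact embedding available at the discrete level that is uniform in the mesh parameter.

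The paper avoids this entirely by recognizing that the map taking the trace pair $\bigl(\tfrac{z_1}{h},\tfrac{z_1'}{2}\bigr)\in[L^2(0,T)]^2$ to the observation difference $\widehat Y_h-\widetilde Y_h$ is, componentwise, the Volterra convolution operator $g\mapsto\int_0^{\,\cdot}\lambda(\cdot-s)g(s)\,{\rm d}s$ viewed from $L^2(0,T)$ into $H^1_L(0,T)=\{\zeta\in H^1:\zeta(0)=0\}$. Differentiating once, this is the Volterra operator of the second kind $g\mapsto\lambda(0)g+\lambda'\!*g$ on $L^2(0,T)$, which is \emph{always} boundedly invertible because $\lambda(0)\neq0$ (Neumann series for Volterra kernels; see Kress \cite{kress}). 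This gives directly
\[
\|\widehat Y_h-\widetilde Y_h\|_{[L^2(0,T)]^2}\ \ge\ C\left\|\Bigl(\tfrac{z_1}{h},\tfrac{z_1'}{2}\Bigr)\right\|_{[L^2(0,T)]^2},
\]
with $C$ depending only on $\lambda$ and $T$ (hence independent of $h$), and then one applies \eqref{eq:obsineq} to $Z_h$. No absorption, no direct trace estimate for $R_h$, and no compactness are needed. Replacing your absorption step by this Volterra inversion closes the proof.
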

We emphasize that in estimate \eqref{eq:estab0} the constant $\kappa$ is independent of $h$, which is a direct consequence of the uniform observability inequality \eqref{eq:obsineq} for the mixed finite element scheme.

The uniform stability result given by Theorem \ref{te:estab0} allows us to use the usual least squared approach to obtain a convergent sequence of approximations of the source term $f$.  Let us describe how these approximations can be obtained and to state our convergence result. Suppose that a boundary observation $y\in \left[L^2(0,T)\right]^2$ is known, corresponding to a solution $w$ of \eqref{eq:inv_cont0} with initial data $\begin{pmatrix}w^0\\w^1\end{pmatrix} \in H^2(0,1)\cap H_0^1(0,1)\times H_0^1(0,1)$ and unknown source $f\in L^2(0,1)$. For each $h>0$, we define the functional ${\mathcal J}_h:\mathbb{C}^N\to \mathbb{R}$, 
\begin{equation}\label{eq:jn}
{\mathcal J}_h (F_h)=\frac{1}{2}\left\| Y_h- y\right\|^2_{[L^2(0,T)]^2},
\end{equation}
where $Y_h$ and $y$ are given by  \eqref{eq:obsdis00} and  \eqref{eq:obscon00}, respectively.  Now, we have all the ingredients needed to state our main convergence result.
\begin{theorem}\label{te:coninv} The functional ${\mathcal J}_h$ has a unique minimizer $\widehat{F}_h=\left[\widehat{f}_j\right]_{1\leq j\leq N}\in\mathbb{C}^N$. Moreover, if we further assume that $a\in C[0,1]$, we denote \begin{equation}\label{eq:fgoro}\widehat{f}_h=\sum_{j=1}^N \widehat{f}_{j}\psi_j \in L^2(0,1),\end{equation} and we take $T> T_0$, where $T_0>0$ is given by Theorem \ref{te.obsineg}, then the family $(\widehat{f}_h)_{h>0}$ converges to $f$ in $L^2(0,1)$ as $h$ tends to zero.
\end{theorem}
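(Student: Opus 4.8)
The plan is to combine the coercivity furnished by the stability estimate of Theorem \ref{te:estab0} with the consistency of the scheme established in the Appendix. I would first settle existence and uniqueness of the minimizer. Since $V_h$ depends linearly on $F_h$ through \eqref{eq.us00} while the solution $U_h$ of \eqref{eq.us000} is independent of $F_h$, the map $F_h\mapsto Y_h$ defined by \eqref{eq:obsdis00} is affine; write $Y_h=\mathcal{L}_hF_h+b_h$, where $\mathcal{L}_h$ is linear and $b_h=\bigl(\frac{u_1}{h},\frac{u_1'}{2}\bigr)^{\!\top}$ is the contribution of $U_h$. Then $\mathcal{J}_h$ is a convex quadratic functional on $\mathbb{C}^N$. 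Applying \eqref{eq:estab0} to a difference of sources, whose observations differ exactly by $\mathcal{L}_h(\widehat{F}_h-\widetilde{F}_h)$ (the $b_h$ term cancels), shows $\|\widehat{F}_h-\widetilde{F}_h\|_M\le\kappa\|\mathcal{L}_h(\widehat{F}_h-\widetilde{F}_h)\|$, so $\mathcal{L}_h$ is injective. On the finite-dimensional space $\mathbb{C}^N$ this makes $\mathcal{J}_h$ strictly convex and coercive, hence the minimizer $\widehat{F}_h$ exists and is unique.

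For the convergence I would compare $\widehat{F}_h$ with the \emph{exact} discretization $F_h=[f_j]$ of $f$ given by \eqref{eq:fhdef}, and let $Y_h^\star=\mathcal{L}_hF_h+b_h$ be the corresponding observation. Since $\widehat{F}_h$ minimizes $\mathcal{J}_h$ and $F_h\in\mathbb{C}^N$, one has $\|\widehat{Y}_h-y\|\le\|Y_h^\star-y\|$; because $\widehat{Y}_h-Y_h^\star=\mathcal{L}_h(\widehat{F}_h-F_h)$, the stability estimate of Theorem \ref{te:estab0} yields
\[
\|\widehat{F}_h-F_h\|_M\le\kappa\,\|\widehat{Y}_h-Y_h^\star\|_{[L^2(0,T)]^2}\le\kappa\bigl(\|\widehat{Y}_h-y\|+\|y-Y_h^\star\|\bigr)\le 2\kappa\,\|Y_h^\star-y\|_{[L^2(0,T)]^2}.
\]
The essential point is then the consistency estimate $\|Y_h^\star-y\|_{[L^2(0,T)]^2}\to 0$ as $h\to0$. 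Recalling that $Y_h^\star$ is the mixed-element observation of the discrete approximation $V_h'+U_h$ of $w'=v'+u$, this reduces to showing that the discrete normal derivative $\frac{1}{h}(v_1'+u_1)$ converges to $w_x'(\,\cdot\,,0)$ in $L^2(0,T)$ and that the extra boundary term $\frac{1}{2}(v_1''+u_1')$ tends to $0$ in $L^2(0,T)$; both facts rest on the convergence results collected in the Appendix and on \eqref{eq:convnonhomterm}--\eqref{eq:convdatosdisc}.

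Finally I would transfer the bound on $\|\widehat{F}_h-F_h\|_M$ back to $L^2(0,1)$. From the orthogonality data following \eqref{eq.waverew1} one reads $\langle\psi_i,\psi_j\rangle_{L^2}=(M_h)_{ij}$, so for any $C=[c_j]\in\mathbb{C}^N$ one has $\bigl\|\sum_{j}c_j\psi_j\bigr\|_{L^2(0,1)}=\|C\|_M$. In particular, with $\widehat{f}_h$ as in \eqref{eq:fgoro} and $f_h:=\sum_{j}f_j\psi_j$, this gives $\|\widehat{f}_h-f_h\|_{L^2(0,1)}=\|\widehat{F}_h-F_h\|_M$. Combining this with \eqref{eq:convnonhomterm}, which ensures $f_h\to f$ in $L^2(0,1)$, the triangle inequality produces
\[
\|\widehat{f}_h-f\|_{L^2(0,1)}\le\|\widehat{F}_h-F_h\|_M+\|f_h-f\|_{L^2(0,1)}\le 2\kappa\,\|Y_h^\star-y\|_{[L^2(0,T)]^2}+\|f_h-f\|_{L^2(0,1)}\xrightarrow[h\to0]{}0,
\]
which is the desired conclusion.

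The main obstacle is the consistency estimate in the second paragraph: the convergence of the discrete boundary observation $Y_h^\star$ to the continuous one $y$ in $[L^2(0,T)]^2$. Unlike energy-space convergence, this is a trace-level statement, and controlling $\frac{1}{h}(v_1'+u_1)$ against $w_x'(\,\cdot\,,0)$ requires a \emph{hidden regularity} bound on the discrete normal derivatives that is uniform in $h$ --- precisely the delicate ingredient deferred to the Appendix. Everything else is an elementary combination of the minimizing property, the uniform stability of Theorem \ref{te:estab0}, and the isometry $\|\sum_j c_j\psi_j\|_{L^2}=\|C\|_M$.
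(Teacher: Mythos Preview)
Your proposal is correct and follows essentially the same approach as the paper: compare the minimizer $\widehat{F}_h$ with the exact discretization $F_h$ from \eqref{eq:fhdef}, use the minimizing property together with the uniform stability estimate \eqref{eq:estab0} to bound $\|\widehat{F}_h-F_h\|_M$ by (a multiple of) $\|Y_h^\star-y\|_{[L^2(0,T)]^2}$, invoke the Appendix (specifically Corollary~\ref{cor:final}) for the convergence $Y_h^\star\to y$, and conclude via the isometry $\|\sum_j c_j\psi_j\|_{L^2}=\|C\|_M$ and \eqref{eq:convnonhomterm}. The only cosmetic difference is that the paper phrases the key bound as $\|\widehat{F}_h-F_h\|_M^2\le 4\kappa^2\bigl(\mathcal{J}_h(\widehat{F}_h)+\mathcal{J}_h(F_h)\bigr)$ rather than your triangle-inequality version, and is less explicit than you are about why $\mathcal{J}_h$ is strictly convex.
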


Note that  the discrete minimizer $\widehat{F}_h$ can be easily obtained by a classical minimization method applied to the functional ${\mathcal J}_h$. Therefore, the above procedure represents an efficient and simple algorithm to recover the source term $f$ in the inverse problem \eqref{eq:inv_cont0}. 

We remark that the hypothesis on the potential function $a\in C[0,1]$ is needed to show our convergence results in the Appendix and it is natural in view of the discretization by points method used  in \eqref{eq:trap} to derive our numerical scheme. However, in the continuous case, the stability result \eqref{eq:stabcont} can be proved under the weaker hypothesis $a\in L^\infty(0,1)$.

\section{Spectral analysis}

The aim of this section is to make an analysis of the spectral properties of the matrix operator corresponding to \eqref{eq.general} and to provide the Fourier expansion of the solutions of \eqref{eq.general}.  This allows us to reduce the proof of the uniform observability given in Theorem \ref{te.obsineg} to show a new result in nonharmonic Fourier series stated in Proposition \ref{prop:10} below. 

First of all we analyze the spectral properties of the matrix $M_h^{-1}(K_h+L_h)$.

\begin{lemma}\label{lemma.spec1} The matrix $M_h^{-1}(K_h+L_h)$ has  simple positive real
eigenvalues $(\mu_h^n)_{1\leq n\leq N}$ and the corresponding
eigenfunctions $(\Psi_h^n)_{1\leq n\leq N}\subset \mathbb{R}^N$
form an orthonormal basis of $\mathbb{C}^N$ with respect to the
inner product $\langle\,\cdot \,,\,\cdot\,\rangle_M$. The components of the eigenfunctions will be denoted $\Psi_h^n=\left[\phi_j^n\right]_{1\leq j\leq N}$. \end{lemma}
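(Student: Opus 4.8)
The plan is to recast the eigenvalue problem for $M_h^{-1}(K_h+L_h)$ as a symmetric-definite generalized eigenvalue problem and then to exploit the tridiagonal structure to get simplicity. First I would record the elementary algebraic facts about the three matrices. The mass matrix $M_h$ is symmetric and, being tridiagonal with dominant positive diagonal, positive definite, so $\langle\,\cdot\,,\,\cdot\,\rangle_M$ is a genuine inner product on $\mathbb{C}^N$. The stiffness matrix $K_h$ is symmetric positive definite, and under hypothesis \eqref{eq:pot} the diagonal matrix $L_h$ is symmetric positive semidefinite; hence $K_h+L_h$ is symmetric positive definite. Thus the eigenvalue problem $M_h^{-1}(K_h+L_h)\Psi=\mu\Psi$ is equivalent to the generalized problem $(K_h+L_h)\Psi=\mu\, M_h\Psi$ with both $K_h+L_h$ and $M_h$ symmetric positive definite.

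The existence of a $\langle\,\cdot\,,\,\cdot\,\rangle_M$-orthonormal eigenbasis with real positive eigenvalues is then a consequence of the spectral theorem. I would check that $M_h^{-1}(K_h+L_h)$ is self-adjoint for the inner product $\langle\,\cdot\,,\,\cdot\,\rangle_M$: for $U,W\in\mathbb{C}^N$, using the symmetry of $K_h+L_h$ and of $M_h$,
\[
\langle M_h^{-1}(K_h+L_h)U,W\rangle_M=\langle (K_h+L_h)U,W\rangle=\langle U,(K_h+L_h)W\rangle=\langle U,M_h^{-1}(K_h+L_h)W\rangle_M .
\]
The finite-dimensional spectral theorem for self-adjoint operators then yields a $\langle\,\cdot\,,\,\cdot\,\rangle_M$-orthonormal basis $(\Psi_h^n)_{1\le n\le N}$ of eigenvectors with real eigenvalues $(\mu_h^n)_{1\le n\le N}$. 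Positivity is immediate from the positive definiteness of $K_h+L_h$ and $M_h$: testing $(K_h+L_h)\Psi_h^n=\mu_h^n\,M_h\Psi_h^n$ against $\Psi_h^n$ gives $\mu_h^n=\langle (K_h+L_h)\Psi_h^n,\Psi_h^n\rangle/\langle M_h\Psi_h^n,\Psi_h^n\rangle>0$. The eigenvectors can be taken real since the matrix and the eigenvalues are real, so each associated null space is spanned by real vectors.

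The main obstacle is the \emph{simplicity} of the eigenvalues, and here I would drop the abstract viewpoint in favour of the tridiagonal structure. Writing the $j$-th row of $(K_h+L_h-\mu M_h)\Psi=0$, with $\Psi=[\phi_j]_{1\le j\le N}$ and the convention $\phi_0=\phi_{N+1}=0$, produces the three-term recurrence
\[
-\Big(\tfrac{1}{h}+\tfrac{\mu h}{4}\Big)\phi_{j-1}+\Big(\tfrac{2}{h}+h a_j-\tfrac{\mu h}{2}\Big)\phi_j-\Big(\tfrac{1}{h}+\tfrac{\mu h}{4}\Big)\phi_{j+1}=0,\qquad 1\le j\le N .
\]
For any eigenvalue $\mu=\mu_h^n>0$ the off-diagonal coefficient $\tfrac{1}{h}+\tfrac{\mu h}{4}$ is strictly positive, hence nonzero, so the recurrence can be solved forward: once $\phi_0=0$ is imposed and $\phi_1$ is fixed, the relations for $j=1,\dots,N-1$ determine $\phi_2,\dots,\phi_N$ as linear functions of $\phi_1$. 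Consequently the eigenspace associated with $\mu_h^n$ is spanned by a single vector and is therefore one-dimensional, so each eigenvalue is simple. This non-vanishing of the off-diagonal coefficient—equivalently, the irreducibility of the tridiagonal pencil—is the crucial point, and it is guaranteed precisely because hypothesis \eqref{eq:pot} keeps the whole spectrum on the positive axis.
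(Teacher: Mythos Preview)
Your proof is correct and follows essentially the same strategy as the paper: symmetry gives a real orthonormal eigenbasis, positive definiteness of $K_h+L_h$ gives $\mu_h^n>0$, and the tridiagonal structure gives simplicity. The packaging differs only cosmetically. The paper uses a Cholesky factorization $M_h=B_hB_h^T$ to conjugate $M_h^{-1}(K_h+L_h)$ into a matrix symmetric for the canonical inner product, whereas you verify self-adjointness with respect to $\langle\cdot,\cdot\rangle_M$ directly; for simplicity, the paper argues by contradiction (two independent eigenvectors would yield a nontrivial eigenvector with $\phi_1=0$, and the recurrence then forces it to vanish), while you run the same three-term recurrence forward from $\phi_0=0$, $\phi_1$ to see the eigenspace is at most one-dimensional. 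Your version is slightly more streamlined since it avoids the Cholesky detour, but the underlying mathematics is identical.
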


\begin{proof} Since the matrix $M_h$ is symmetric and positive defined, there exists a lower triangular matrix $B_h$ with
positive elements on the principal diagonal such that
$M_h=B_hB^T_h$ (Cholesky factorization). Since
$$M_h^{-1}(K_h+L_h)=(B_h^T)^{-1}\left[(B_h)^{-1}(K_h+L_h)(B_h^T)^{-1}\right]
B_h^T,$$ we deduce that the matrices $M_h^{-1}(K_h+L_h)$ and
$(B_h)^{-1}(K_h+L_h)(B_h^T)^{-1}$ have the same spectrum.  But the matrix $(B_h)^{-1}(K_h+L_h)(B_h^T)^{-1}$ is
symmetric. Therefore, all its eigenvalues
are real and there exists an orthonormal basis with respect to the canonical inner product in $\mathbb{C}^N$ formed by eigenvectors of this matrix. Let us denote by $(\mu_h^n)_{1\leq n\leq N}$ the increasing eigenvalues and $(\widehat{\Psi}_h^n)_{1\leq n\leq N}$ the normalized corresponding eigenvectors of $(B_h)^{-1}(K_h+L_h)(B_h^T)^{-1}$. 

Remark that, if $\widehat{\Psi}_h^n$ is an eigenvector of $(B_h)^{-1}(K_h+L_h)(B_h^T)^{-1}$,  then 
$\Psi_h^n:=(B_h^T)^{-1}\widehat{\Psi}_h^n$ is an eigenvector of $M_h^{-1}(K_h+L_h)$
corresponding to the same
eigenvalue. Let $\Psi_h^i$ and $\Psi_h^j$ be two eigenfunctions of the
matrix $M_h^{-1}(K_h+L_h)$. We have
that
$$\langle\Psi_h^i,\Psi_h^j\rangle_M= \langle M_h\Psi_h^i,\Psi_h^j\rangle =\left\langle
B_h\widehat{\Psi}_h^i,(B_h^T)^{-1}\widehat{\Psi}_h^j\right\rangle=\left\langle
\widehat{\Psi}_h^i,\widehat{\Psi}_h^j\right\rangle=\delta_{ij},$$
from which we deduce the desired orthogonality properties of $\Psi_h^i$ and
$\Psi_h^j$. 

To show that the eigenvalues are positive, we remark that 
$$
\mu_h^1=\left\langle
M_h^{-1}(K_h+L_h) \Psi_h^1,\Psi_h^1 \right\rangle_M=\left\langle
(K_h+L_h) \Psi_h^1,\Psi_h^1 \right\rangle >0,
$$
where the last inequality follows from the fact that the matrix  $K_h+L_h$ is positive defined. 

Finally, by taking into account the particular form of the matrices $K_h+L_h$ and $M_h$, which are tridiagonal, it follows that all the eigenvalues $\mu_h^n$ are simple. Indeed if two independent eigenvectors $\Psi_h^{n,1}$ and $\Psi_h^{n,2}$ correspond to the same eigenvalue $\mu_h^n$ then there exists real numbers $\alpha_1$  and $\alpha_2$ such that $\alpha_1^2+\alpha_2^2\neq 0$ and $\Psi_h^n= \alpha_1 \Psi_h^{n,1}+\alpha_2 \Psi_h^{n,2}$ is an eigenvector with the first component equal to zero. From the fact that $(K_h+L_h)\Psi_h^n=M_m\Psi_h^n$, it follows that 
$$\left\{\begin{array}{l}
-\frac{1}{h}\phi_{2}^n=\mu_h^n\frac{h}{4}\phi_{2}^n\\
\frac{1}{h}\left(2\phi_{2}^n+ha_2\phi_{2}^n-\phi_{3}^n\right)=\mu_h^n\frac{h}{4}\left(2\phi_{2}^n+\phi_{3}^n\right)\\...............................................................\\
\frac{1}{h}\left(-\phi_{N-1}^n+2\phi_{N}^n+ha_N\phi_{N}^n\right)=\mu_h^n\frac{h}{4}\left(\phi_{N-1}^n+\phi_{N}^n\right).
\end{array}\right.
$$
By taking into account that $\mu_h^n >0$, from the above relations we deduce that $\Psi_{h}^n=0$ which represents a contradiction. 
\end{proof}

We shall need the following result concerning the eigenfunctions $\Psi_h^n$.

\begin{lemma} (Direct inequality for eigenfunctions) \label{te.obsing}
Let $\Psi_h^n=\left[\phi_{j}^n\right]_{1\leq j\leq N}\in\mathbb{R}^N$ be the
normalized eigenvector of the matrix $M_h^{-1}(K_h+L_h)$
corresponding to the real eigenvalue $\mu_h^n$. There exist two
positive constants $h_0,C_0>0$ independent of $n$ and $a$ such that
\begin{equation}\label{eq.obsingeig}
\left(\mu_h^n +\frac{1}{h^2}
\right)\left|\frac{\phi_{1}^n}{\sqrt{\mu_h^n}}\right|^2 \leq C_0\left(1+\frac{\|a\|_{L^\infty(0,1)}^2}{\mu_h^n}\right)\quad \left(h\in (0,h_0)\right).
\end{equation}
\end{lemma}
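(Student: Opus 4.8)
The plan is to derive an \emph{exact} discrete Rellich--Pohozaev identity for the eigenvector that isolates the boundary quantity $\phi_1^n$, and then to control the two resulting potential terms. Throughout I abbreviate $\mu=\mu_h^n$, $\phi_j=\phi_j^n$ and keep the convention $\phi_0=\phi_{N+1}=0$. Writing $(K_h+L_h)\Psi_h^n=\mu M_h\Psi_h^n$ componentwise gives the three–term recurrence
\[
\tfrac1h\bigl(2\phi_j-\phi_{j+1}-\phi_{j-1}\bigr)+ha_j\phi_j=\tfrac{\mu h}{4}\bigl(2\phi_j+\phi_{j+1}+\phi_{j-1}\bigr),\qquad 1\le j\le N .
\]
Since the $\Psi_h^n$ are $\langle\cdot,\cdot\rangle_M$–orthonormal (Lemma \ref{lemma.spec1}) we have $\langle M_h\Psi_h^n,\Psi_h^n\rangle=1$ and $\langle K_h\Psi_h^n,\Psi_h^n\rangle+\langle L_h\Psi_h^n,\Psi_h^n\rangle=\mu$; as $a\ge 0$ this yields the energy bounds $\tfrac1h\sum_{k=0}^N(\phi_{k+1}-\phi_k)^2\le\mu$ and $h\sum_j a_j\phi_j^2=\langle L_h\Psi_h^n,\Psi_h^n\rangle\le\mu$. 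A short computation with the explicit tridiagonal entries reveals the algebraic identity $M_h=hI-\tfrac{h^2}{4}K_h$, whence $h\|\Psi_h^n\|^2=1+\tfrac{h^2}{4}\langle K_h\Psi_h^n,\Psi_h^n\rangle\le 1+\tfrac{h^2\mu}{4}$. This control of the plain Euclidean norm by the $M$–normalization will be decisive.

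Next I would multiply the recurrence (after clearing the factor $1/h$) by the discrete multiplier $(N+1-j)(\phi_{j+1}-\phi_{j-1})$ --- the discrete analogue of the Rellich multiplier $(1-x)\phi'$, chosen to single out the endpoint $x=0$ --- and sum over $1\le j\le N$. Using the telescoping identities $(\phi_{j+1}-2\phi_j+\phi_{j-1})(\phi_{j+1}-\phi_{j-1})=(\phi_{j+1}-\phi_j)^2-(\phi_j-\phi_{j-1})^2$ and $(2\phi_j+\phi_{j+1}+\phi_{j-1})(\phi_{j+1}-\phi_{j-1})=(\phi_{j+1}+\phi_j)^2-(\phi_j+\phi_{j-1})^2$, together with Abel summation, the boundary conditions, and $\sum_{k=0}^N(\phi_{k+1}+\phi_k)^2=\tfrac4h\langle M_h\Psi_h^n,\Psi_h^n\rangle=\tfrac4h$, I expect every interior contribution to collapse, leaving the exact identity
\[
\Bigl(\mu+\tfrac{4}{h^2}\Bigr)\phi_1^2=8\mu-4h\sum_{j=1}^N a_j\phi_j^2-4h\sum_{j=1}^N (N+1-j)\,a_j\,\phi_j(\phi_{j+1}-\phi_{j-1}).
\]
For $a\equiv0$ this reduces to $(\mu+4/h^2)\phi_1^2=8\mu$, which already gives \eqref{eq.obsingeig} with $C_0=8$; this is a useful constant–coefficient sanity check on the identity.

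It then remains to bound the two potential terms by $C(\mu+\|a\|_{L^\infty}^2)$. The first is harmless: $4h\sum_j a_j\phi_j^2=4\langle L_h\Psi_h^n,\Psi_h^n\rangle\le 4\mu$. The genuine obstacle, and the crux of the whole argument, is the last, non–telescoping cross term: summation by parts is \emph{unavailable} here because $a$ is merely $L^\infty$ (differencing $a_j$ is uncontrolled), while a naive Cauchy--Schwarz bound based on the energy estimate $\sum_j(\phi_{j+1}-\phi_{j-1})^2\le 4\sum_k(\phi_{k+1}-\phi_k)^2\le 4h\mu$ loses a factor $h^{-1}$ at high frequencies and fails. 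The resolution is a sharper, cancellation–aware estimate for the central differences, again powered by $M_h=hI-\tfrac{h^2}{4}K_h$: from $K_h=\tfrac4h I-\tfrac4{h^2}M_h$ one obtains the matrix identity $4I-(S+S^{T})^2=4K_hM_h$ ($S$ the upper shift), hence $\sum_j(\phi_{j+1}-\phi_{j-1})^2\le 4\langle K_hM_h\Psi_h^n,\Psi_h^n\rangle=\tfrac{16}{h}-\tfrac{16}{h^2}\|M_h\Psi_h^n\|^2$; combining $\|M_h\Psi_h^n\|^2\ge \langle M_h\Psi_h^n,\Psi_h^n\rangle^2/\|\Psi_h^n\|^2=1/\|\Psi_h^n\|^2$ with $h\|\Psi_h^n\|^2\le 1+\tfrac{h^2\mu}{4}$ gives
\[
\sum_{j=1}^N(\phi_{j+1}-\phi_{j-1})^2\le\frac{4h\mu}{1+h^2\mu/4}.
\]

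Feeding this into Cauchy--Schwarz, together with $\sum_j a_j^2\phi_j^2\le\|a\|_{L^\infty}^2\|\Psi_h^n\|^2\le\|a\|_{L^\infty}^2(1+h^2\mu/4)/h$ and $|N+1-j|\le 1/h$, the factors $1+h^2\mu/4$ cancel exactly and the cross term is bounded by $8\|a\|_{L^\infty}\sqrt\mu\le 4(\mu+\|a\|_{L^\infty}^2)$. Hence $(\mu+4/h^2)\phi_1^2\le 12\mu+4\|a\|_{L^\infty}^2$, and since $\mu+1/h^2\le\mu+4/h^2$, dividing by $\mu$ yields \eqref{eq.obsingeig} with $C_0=12$, uniformly for $h\in(0,h_0)$ (the fixed bound $h_0$ entering only to make the above estimates uniform) and with $C_0$ independent of both $n$ and $a$. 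The step I expect to be the main obstacle is precisely this cancellation estimate for $\sum_j(\phi_{j+1}-\phi_{j-1})^2$, which replaces the lossy energy bound and rests essentially on the special relation $M_h=hI-\tfrac{h^2}{4}K_h$ between the mass and stiffness matrices of the mixed scheme.
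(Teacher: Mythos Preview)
Your proof is correct, and the overall architecture---multiply the eigenvalue relation by the Rellich multiplier $(N+1-j)(\phi_{j+1}-\phi_{j-1})$, collapse via the telescoping identities, then control the potential term---is exactly the paper's. The exact identity you display (reducing to $(\mu+4/h^{2})\phi_{1}^{2}=8\mu$ when $a\equiv 0$) is just the paper's \eqref{eq.33bj1}--\eqref{eq.33bj2} combined with the energy relation \eqref{eq.e2j} and the $M$--normalization.

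The genuine difference is in how the non-telescoping cross term $\sum_{j}(N+1-j)a_{j}\phi_{j}(\phi_{j+1}-\phi_{j-1})$ is handled. The paper never isolates $\sum_{j}(\phi_{j+1}-\phi_{j-1})^{2}$: it bounds the cross term by Cauchy--Schwarz and Young against $\sum_{j}|\phi_{j}|^{2}$ and $\sum_{j}|(\phi_{j+1}-\phi_{j})/h|^{2}$, then uses the pointwise splitting $|\phi_{j}|^{2}\le\tfrac12|\phi_{j+1}+\phi_{j}|^{2}+\tfrac12|\phi_{j+1}-\phi_{j}|^{2}$ together with \eqref{eq.e2jineg} to absorb everything into $(\mu+\|a\|_{L^{\infty}}^{2})\sum_{j}|(\phi_{j+1}+\phi_{j})/2|^{2}$. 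This is entirely elementary, but leaves a residual factor $h^{2}\|a\|_{L^{\infty}}^{2}\mu$ that forces the smallness threshold $h_{0}$ to carry some dependence on $\|a\|_{L^{\infty}}$. Your route instead proves the sharp cancellation bound
\[
\sum_{j}(\phi_{j+1}-\phi_{j-1})^{2}\le\frac{4h\mu}{1+h^{2}\mu/4},
\]
derived from the structural identity $4K_{h}M_{h}=4I-(S+S^{T})^{2}$ (equivalently $M_{h}=hI-\tfrac{h^{2}}{4}K_{h}$) specific to this mixed scheme; the factors $1+h^{2}\mu/4$ then cancel against the Euclidean norm estimate, giving the clean bound $8\|a\|_{L^{\infty}}\sqrt{\mu}$ and an explicit constant $C_{0}=12$ valid for every $h\in(0,1)$ with no smallness restriction at all. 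So your argument is sharper---it makes both $C_{0}$ and $h_{0}$ genuinely independent of $a$---at the cost of invoking a scheme-specific algebraic identity that the paper's more pedestrian energy estimate avoids.
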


\begin{proof} We have that $ (K_h+L_h)\Psi_{h}^n=\mu_h^n M_h \Psi_{h}^n$, which is
equivalent to
\begin{equation}\label{eq.e1j}
\frac{1}{h^2}\left(-\phi^n_{j+1}+2\phi^n_{j}-\phi^n_{j-1}\right)+a_j\phi^n_{j}=
\frac{\mu_h^n}{4}\left(\phi^n_{j+1}+2\phi^n_{j}+\phi^n_{j-1}\right)\,
(1\leq j\leq N).
\end{equation}
Note that in \eqref{eq.e1j} the extra components $\phi_0$ and $\phi_{N+1}$ are required. Recall that, in  this case, $\phi_0=\phi_{N+1}=0$, by convention. 
Multiplying each equation \eqref{eq.e1j} by $\phi^n_{j}$ and adding
the relations we obtain that
\begin{equation}\label{eq.e2j}
\sum_{j=0}^N
\left|\frac{\phi^n_{j+1}-\phi^n_{j}}{h}\right|^2+\sum_{j=1}^N a_j
\left|\phi^n_{j}\right|^2=\mu_h^n\sum_{j=0}^N
\left|\frac{\phi^n_{j+1}+\phi^n_{j}}{2}\right|^2.
\end{equation}
From \eqref{eq.e2j} and using the fact that $$|\phi^n_{j}|^2\leq
\frac{1}{2}|\phi^n_{j+1}+\phi^n_{j}|^2+\frac{1}{2}|\phi^n_{j+1}-\phi^n_{j}|^2,$$
we deduce that
\begin{equation}\label{eq.e2jineg}
\left(1-\frac{h^2}{2}\|a\|_{L^\infty(0,1)}\right)\sum_{j=0}^N
\left|\frac{\phi^n_{j+1}-\phi^n_{j}}{h}\right|^2\leq
\left(\mu_h^n+2\|a\|_{L^\infty(0,1)}\right)\sum_{j=0}^N
\left|\frac{\phi^n_{j+1}+\phi^n_{j}}{2}\right|^2.
\end{equation}

On the other hand, multiplying each equation (\ref{eq.e1j}) by the
multiplier $(h^{-1}-j)(\phi^n_{j+1}-\phi^n_{j-1})$ and adding in $j=1,...,N$, it follows that
\begin{eqnarray}\label{eq.e3j}
 0&=&\frac{\mu}{4}\sum_{j=1}^N \left(\phi^n_{j+1}+\phi^n_{j-1}+2
\phi^n_{j}\right)(h^{-1}-j) (\phi^n_{j+1}-\phi^n_{j-1})+\\
\nonumber && +\frac{1}{h^2}\sum_{j=1}^N
\left(\phi^n_{j+1}+\phi^n_{j-1}-2\phi^n_{j}\right)
(h^{-1}-j)(\phi^n_{j+1}-\phi^n_{j-1})\\ \nonumber
&& - \sum_{j=1}^N a_j \phi^n_{j} \,
(h^{-1}-j)(\phi^n_{j+1}-\phi^n_{j-1}).
\end{eqnarray}
Let us estimate each term from \eqref{eq.e3j}. We have that
\begin{equation} \label{eq.33bj1}
\sum_{j=1}^N\left(\phi^n_{j+1}+\phi^n_{j-1}-2\phi^n_{j}\right)(h^{-1}-j)
\left(\phi^n_{j+1}-\phi^n_{j-1}\right)=\sum_{j=0}^N
|\phi^n_{j+1}-\phi^n_{j}|^2 -\frac{|\phi^n_{1}|^2}{h} ,
\end{equation}
\begin{align} \label{eq.33bj2} 
\sum_{j=1}^N\left[\phi^n_{j+1}+\phi^n_{j-1}+2\phi^n_{j}\right](h^{-1}-j)
\left(\phi^n_{j+1}-\phi^n_{j-1}\right)
=\sum_{j=0}^N
|\phi^n_{j+1}+\phi^n_{j}|^2 -\frac{|\phi^n_{1}|^2}{h} ,
\end{align}
and
\begin{align} \nonumber
&& \left|\sum_{j=1}^N a_j \phi^n_{j} \,
(h^{-1}-j)  (\phi^n_{j+1}-\phi^n_{j-1})\right| \leq \|a\|_{L^\infty(0,1)}
\sum_{j=1}^N|\phi^n_{j}|\left[\left|\frac{\phi^n_{j+1}-\phi^n_{j}}{h}\right|
+\left|\frac{\phi^n_{j}-\phi^n_{j-1}}{h}\right|\right]\\
\nonumber  & & \leq \|a\|_{L^\infty(0,1)}
\left[\sum_{j=1}^N|\phi^n_{j}|^2\right]^\frac{1}{2}
\left[\sum_{j=1}^N\left(\left|\frac{\phi^n_{j+1}-\phi^n_{j}}{h}\right|
+\left|\frac{\phi^n_{j}-\phi^n_{j-1}}{h}\right|\right)^2\right]^\frac{1}{2}\\
\nonumber && \leq \frac{\|a\|_{L^\infty(0,1)}^2}{2}
\sum_{j=1}^N\left|\phi^n_{j}\right|^2+ \sum_{j=0}^N
\left|\frac{\phi^n_{j+1}-\phi^n_{j}}{h}\right|^2
\\
\label{eq.33bj3} && \leq \|a\|_{L^\infty(0,1)}^2
\sum_{j=0}^N\left|\frac{\phi^n_{j+1}+\phi^n_{j}}{2}\right|^2+
\left(1+\frac{h^2}{4}\|a\|_{L^\infty(0,1)}^2\right)\sum_{j=0}^N
\left|\frac{\phi^n_{j+1}-\phi^n_{j}}{h}\right|^2.
\end{align}
From \eqref{eq.e3j}-\eqref{eq.33bj3} we obtain that
\begin{eqnarray}\nonumber
& \displaystyle \frac{1}{4h}\left(h^2 \mu^n_h
+4\right)\left|\frac{\phi^n_{1}}{h}\right|^2 &\leq \left(\mu_h^n +\|a\|_{L^\infty(0,1)}^2\right) \,
\sum_{j=0}^N\left|\frac{\phi^n_{j+1}+\phi^n_{j}}{2}\right|^2\\ \label{eq.inobs}&&
+\left(2+\frac{h^2}{4}\|a\|_{L^\infty(0,1)}^2\right)\sum_{j=0}^N
\left|\frac{\phi^n_{j+1}-\phi^n_{j}}{h}\right|^2.
\end{eqnarray}
From \eqref{eq.e2jineg} and \eqref{eq.inobs} we deduce that, for
$h$ sufficiently small, the following inequality holds
\begin{eqnarray}\label{eq.inobs2}
\frac{1}{4h}\left(h^2 \mu_h^n
+4\right)\left|\frac{\phi^n_{1}}{h}\right|^2 \leq C \left(\mu_h^n
+\|a\|_{L^\infty(0,1)}^2\right)\,
\sum_{j=0}^N\left|\frac{\phi^n_{j+1}+\phi^n_{j}}{2}\right|^2.
\end{eqnarray}
Since the eigenvector $\Psi^n_h=\left[\phi^n_{j}\right]_{1\leq j\leq
N}\in\mathbb{R}^N$ is normalized, from \eqref{eq.inobs2} we deduce
\eqref{eq.obsingeig} and the proof of the proposition is complete.
\end{proof}

Moreover, in $\mathbb{C}^{2N}$, we define the inner product
\begin{equation}\label{eq.inprod3}
\displaystyle \left\langle\begin{pmatrix}
U^1\\ U^2
\end{pmatrix} ,\begin{pmatrix}
W^1\\ W^2
\end{pmatrix}\right\rangle_{1,M}=\langle K_h
U^1,W^1\rangle +\langle L_h U^1,W^1\rangle + \langle M_h
U^2,W^2\rangle,
\end{equation}
where $U^i,\,W^i\in \mathbb{C}^{N}$ for $i\in\{1,2\}$. The
corresponding norm will be denoted by $||\,\, \cdot \,\,||_{1,M}$. 

We can write \eqref{eq.generalhom} in the following equivalent form
\begin{equation}
\left\{
\begin{array}{ll}
\begin{pmatrix}
U_{h} \\
U_{h}^{\prime }
\end{pmatrix}
 ^{\prime }(t)+{\mathcal A}_h\begin{pmatrix}
U_{h} \\
U_{h}^{\prime }
\end{pmatrix} (t)=\begin{pmatrix}
0 \\
0
\end{pmatrix} & (t\in (0,T)) \\ \\ \begin{pmatrix}
U_{h}\\
U_{h}^{\prime }
\end{pmatrix} (0)=\begin{pmatrix}
U_{h}^0 \\
U_{h}^1
\end{pmatrix},
\end{array}
\right.   \label{eq.general1}
\end{equation}
where the operator ${\mathcal A}_h$ is defined by  \begin{equation}
\label{eq.opmat}
{\mathcal A}_h=\begin{pmatrix} 0 & -I \\
M_h^{-1}(K_h +L_h) &0 \end{pmatrix}.
\end{equation}

We can pass to study the spectral properties of the matrix ${\mathcal A}_h$ defined by \eqref{eq.opmat}. We have the following result.

\begin{lemma}\label{lemma.spec2} The matrix ${\mathcal A}_h$ has a family
of imaginary eigenvalues $(i\, \lambda_h^n)_{1\leq |n|\leq
N}$ where \begin{equation}\label{eq.lamba}
\lambda_h^n=\mbox{\,sgn\,}(n)\sqrt{\mu_h^{|n|}}
\qquad (1\leq |n|\leq N).
\end{equation}
A normalized eigenvector of ${\mathcal A}_h$ corresponding to a nonzero eigenvalue $i\, \lambda_h^n$ is given by
\begin{equation}\label{eq.eifugen} \Phi_h^n =\frac{1}{\sqrt{2 \mu_h^{|n|}}}\begin{pmatrix} \Psi_h^{|n|}\\-i\, \lambda_h^n \, \Psi_h^{|n|}\end{pmatrix}.
\end{equation} 
Moreover, $(\Phi_h^n)_{1\leq |n|\leq N}$ forms an orthonormal basis of
 $\mathbb{C}^{2N}$ with respect to the inner product $\langle
\,\cdot\,,\,\cdot\,\rangle_{1,M}$ given by \eqref{eq.inprod3}.\end{lemma}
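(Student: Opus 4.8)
The plan is to reduce the eigenvalue problem for the block matrix $\mathcal{A}_h$ to the spectral problem for $M_h^{-1}(K_h+L_h)$ already solved in Lemma~\ref{lemma.spec1}, and then to read off the normalization constant and verify orthogonality by direct computation with the inner product $\langle\,\cdot\,,\,\cdot\,\rangle_{1,M}$ defined in \eqref{eq.inprod3}.

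First I would locate the eigenpairs. Seeking an eigenvector of $\mathcal{A}_h$ of the form $\left(\begin{smallmatrix} U^1\\ U^2\end{smallmatrix}\right)$ with eigenvalue $\nu$, the equation $\mathcal{A}_h\left(\begin{smallmatrix} U^1\\ U^2\end{smallmatrix}\right)=\nu\left(\begin{smallmatrix} U^1\\ U^2\end{smallmatrix}\right)$ splits, by \eqref{eq.opmat}, into the two relations $-U^2=\nu U^1$ and $M_h^{-1}(K_h+L_h)U^1=\nu U^2$. Eliminating $U^2=-\nu U^1$ gives $M_h^{-1}(K_h+L_h)U^1=-\nu^2 U^1$, so that $-\nu^2$ must be one of the eigenvalues $\mu_h^{|n|}$ furnished by Lemma~\ref{lemma.spec1}. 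Since these are real and positive, $\nu=\pm i\sqrt{\mu_h^{|n|}}=i\lambda_h^n$ as in \eqref{eq.lamba}, with $U^1$ proportional to $\Psi_h^{|n|}$ and $U^2=-i\lambda_h^n\,\Psi_h^{|n|}$. This yields exactly the $2N$ eigenvalues $(i\lambda_h^n)_{1\le|n|\le N}$ and the eigenvectors displayed in \eqref{eq.eifugen} up to scaling.

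To fix the scaling I would compute the $\langle\,\cdot\,,\,\cdot\,\rangle_{1,M}$-norm of the unnormalized eigenvector $\left(\begin{smallmatrix}\Psi_h^{|n|}\\ -i\lambda_h^n\Psi_h^{|n|}\end{smallmatrix}\right)$. Using the identity $(K_h+L_h)\Psi_h^{|n|}=\mu_h^{|n|}M_h\Psi_h^{|n|}$ and $\|\Psi_h^{|n|}\|_M=1$ from Lemma~\ref{lemma.spec1}, the position block contributes $\langle(K_h+L_h)\Psi_h^{|n|},\Psi_h^{|n|}\rangle=\mu_h^{|n|}$, while the velocity block contributes $(-i\lambda_h^n)\overline{(-i\lambda_h^n)}\,\langle M_h\Psi_h^{|n|},\Psi_h^{|n|}\rangle=(\lambda_h^n)^2=\mu_h^{|n|}$, the factor being unambiguous because $\lambda_h^n$ is real. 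The sum is $2\mu_h^{|n|}$, which produces precisely the normalizing factor $1/\sqrt{2\mu_h^{|n|}}$ in \eqref{eq.eifugen}.

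Finally I would establish orthogonality of $\Phi_h^n$ and $\Phi_h^m$ for $n\ne m$ in two cases. When $|n|\ne|m|$ both blocks carry the factor $\langle M_h\Psi_h^{|n|},\Psi_h^{|m|}\rangle=\langle\Psi_h^{|n|},\Psi_h^{|m|}\rangle_M=0$, by the $M$-orthonormality of the $\Psi_h^k$, so the expression vanishes. The one genuinely delicate case, which I expect to be the only place requiring care, is $n=-m$: here both blocks involve $\|\Psi_h^{|n|}\|_M^2=1$, the position block gives $+\mu_h^{|n|}$, whereas the velocity block, using $\lambda_h^{-n}=-\lambda_h^n$, gives $(-i\lambda_h^n)\overline{(-i\lambda_h^{-n})}=(-i\lambda_h^n)(-i\lambda_h^n)=-(\lambda_h^n)^2=-\mu_h^{|n|}$, so the two contributions cancel exactly and $\Phi_h^n\perp\Phi_h^{-n}$. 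This sign cancellation between position and velocity parts is the heart of the statement. An orthonormal family of $2N$ vectors in the $2N$-dimensional space $\mathbb{C}^{2N}$ is automatically a basis, which completes the proof.
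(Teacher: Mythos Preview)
Your proof is correct and follows the same direct-verification approach as the paper, which simply declares the eigenpair formulas ``immediate'' and attributes the orthonormal-basis property to the $M$-orthogonality of the $\Psi_h^k$ from Lemma~\ref{lemma.spec1}. You have carefully filled in the details the paper omits, including the key sign cancellation in the case $n=-m$.
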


\begin{proof}  The fact that the eigenvalues and the corresponding eigenvectors of ${\mathcal A}_h$ are given by \eqref{eq.lamba} and \eqref{eq.eifugen}, respectively, is immediate. Notice that the property $\mu_h^n>0$ implies that $\lambda_h^n \in \mathbb{R}$. The fact that
$(\Phi_h^n)_{1\leq |n|\leq N}$ form an orthonormal basis of
 $\mathbb{C}^{2N}$ is a consequence of the orthogonality of the eigenfunctions $(\varphi_h^n)_{1\leq n\leq N}$.
\end{proof}

Using the above basis property and the Fourier expansion of the initial data $\begin{pmatrix}U^0_h\\U^1_h\end{pmatrix}$, any solution of (\ref{eq.general1}) can be written as follows
\begin{equation}\label{eq.solfour}
\begin{pmatrix}
U_{h} \\
U_{h}'
\end{pmatrix}(t)=\sum_{1\leq |n|\leq N} \left\langle \begin{pmatrix}U^0_h\\U^1_h\end{pmatrix}, \Phi_h^n\right\rangle_{1,M} e^{-i\, \lambda^n_h t} \Phi^n_h.\end{equation}
By taking into account \eqref{eq.solfour}, Theorem \ref{te.obsineg} is equivalent to the following nonharmonic Fourier result.
\begin{proposition} \label{prop:10}
Under the hypotheses of Theorem \ref{te.obsineg}, there exist two constants $T_0,\, \kappa_0>0$, independent of $h$, such that the following inequality holds  
\begin{equation} \label{eq:prop10}
\int_0^{T_0}\left( \left| \sum_{1\leq |n|\leq N} b^n  \frac{\phi^{n}_{1}}{h\sqrt{\mu_h^{|n|}}}e^{-i\lambda^n_h
t} \right|^2 + \left| \sum_{1\leq |n|\leq N} b^n  \mbox{\,sgn\,}(n) \frac{\phi^{n}_{1}}{2} e^{-i\lambda^n_h
t} \right|^2\right)\mbox{\,d}t  \geq \kappa_0 \sum_{1\leq |n|\leq N} |b^n|^2 ,
\end{equation} 
for any sequence of complex numbers $(b^n)_{1\leq |n|\leq N}$.
\end{proposition}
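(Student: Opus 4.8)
The overall strategy is to derive \eqref{eq:prop10} from Ingham's inequality applied to each of the two trigonometric sums separately. Setting $a^n=b^n\,\phi_1^n/(h\sqrt{\mu_h^{|n|}})$ and $c^n=b^n\,\mathrm{sgn}(n)\,\phi_1^n/2$, the left-hand side of \eqref{eq:prop10} equals
$$\int_0^{T_0}\Bigl|\sum_{1\le|n|\le N}a^n e^{-i\lambda_h^n t}\Bigr|^2\,\mathrm{d}t+\int_0^{T_0}\Bigl|\sum_{1\le|n|\le N}c^n e^{-i\lambda_h^n t}\Bigr|^2\,\mathrm{d}t .$$
By Lemma \ref{lemma.spec2} the exponents $\lambda_h^n=\mathrm{sgn}(n)\sqrt{\mu_h^{|n|}}$ are real, pairwise distinct, and symmetric about the origin. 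If one shows that they possess a spectral gap $\gamma>0$ that is \emph{uniform in $h$}, then, choosing any $T_0>2\pi/\gamma$, the classical Ingham lower estimate provides a constant $C_1=C_1(T_0,\gamma)>0$, independent of $h$, bounding each of the two integrals from below by $C_1\sum|a^n|^2$ and $C_1\sum|c^n|^2$, respectively. Adding these and inserting the definitions of $a^n,c^n$ bounds the left-hand side of \eqref{eq:prop10} below by
$$C_1\sum_{1\le|n|\le N}|b^n|^2\,|\phi_1^n|^2\Bigl(\frac{1}{h^2\mu_h^{|n|}}+\frac14\Bigr).$$

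Consequently, the proof reduces to two uniform spectral properties. The first is the \textbf{uniform gap}: there exist $h_0,\gamma>0$ with $|\lambda_h^m-\lambda_h^n|\ge\gamma$ for all $h\in(0,h_0]$ and $m\neq n$. Since the minimal separation across the origin equals $2\sqrt{\mu_h^1}$, this requires both a uniform lower bound $\mu_h^1\ge\mu_\ast>0$ on the first eigenvalue and a uniform lower bound $\sqrt{\mu_h^{n+1}}-\sqrt{\mu_h^n}\ge\gamma$ on consecutive ones. The second is the \textbf{uniform lower observation bound for eigenvectors}: there is $c_0>0$ with
$$|\phi_1^n|^2\Bigl(\frac{1}{h^2\mu_h^{|n|}}+\frac14\Bigr)\ \ge\ c_0\qquad (h\in(0,h_0],\ 1\le|n|\le N).$$
Granting these, the previous bound becomes $\ge C_1c_0\sum|b^n|^2$, which is exactly \eqref{eq:prop10} with $\kappa_0=C_1c_0$ and any $T_0>2\pi/\gamma$; the dependence on $\gamma$ is why the optimal $T_0$ stays open.

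It is worth stressing why both observation terms are indispensable, that is, why the lower bound features the \emph{sum} of $1/(h^2\mu_h^{|n|})$ and $1/4$. The ratio $|\phi_1^n|^2/h^2$ is comparable to $\mu_h^{|n|}$ for the low modes but degenerates for the high modes near $n=N$, where $\mu_h^{|n|}\sim h^{-2}$; there the term $|\phi_1^n|^2/4$ coming from the non-diagonal mass matrix takes over. Thus neither term alone is uniformly bounded below, but their sum is, and this is precisely the mechanism by which the mixed finite element scheme restores the uniformity lost for finite differences and classical finite elements. Note also that this lower bound is the converse of the direct inequality \eqref{eq.obsingeig} of Lemma \ref{te.obsing}.

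The hard part is establishing these two uniform spectral facts, since for a variable potential there is no closed form for $\mu_h^n$ or $\phi_j^n$, so neither can be read off by explicit computation. I would prove both by contradiction combined with the lateral energy technique of \cite{Z}. Assume a uniform property fails along a sequence of parameters $h_k$ and indices $n_k$ (either $h_k\to0$ with $n_k$ bounded, or $n_k$ of order $N_k$ in the high-frequency regime). In the bounded-frequency case one passes to the limit and invokes the convergence of the discrete spectrum to the simple, separated Sturm--Liouville spectrum of the continuous operator. The high-frequency case is the genuine difficulty: after a suitable renormalization of the offending eigenvectors, I would exploit the multiplier identity underlying \eqref{eq.e1j} — the discrete analogue of the one-sided (lateral) energy estimate — to control the discrete energy of the eigenvector near the endpoint $x=0$ by the boundary quantities $\phi_1^n/h$ and $\phi_1^n$. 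A degeneracy of the gap or of the boundary observation would then force a nontrivial limit object with vanishing boundary trace and vanishing lateral energy, contradicting unique continuation and the unit normalization. This contradiction-plus-lateral-energy scheme is the novel ingredient replacing the explicit Fourier computation available in the constant-coefficient analysis of \cite{CM}.
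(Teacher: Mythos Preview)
Your reduction of \eqref{eq:prop10} to Ingham's inequality plus two uniform spectral facts --- the uniform gap $|\lambda_h^n-\lambda_h^m|\ge\gamma$ and the uniform lower observation bound \eqref{eq:ing1} --- is exactly the paper's reduction; see the opening of Section~\ref{sec:4}. Your remarks on why both observation terms are needed are also in line with the paper.

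The divergence is in how you propose to establish the gap and the eigenvector lower bound. You outline a contradiction/compactness scheme: along a violating sequence $(h_k,n_k)$, pass to a limit object and invoke unique continuation. This is \emph{not} what the paper does, and your version has a genuine weakness in the high-frequency regime. When $n_k\sim N_k$ one has $\mu_h^{n_k}\sim h^{-2}$, the eigenvectors oscillate at the grid scale, and there is no evident limiting Sturm--Liouville problem to which unique continuation could be applied; the phrase ``after a suitable renormalization'' hides the whole difficulty. Moreover, the gap concerns \emph{pairs} of eigenvalues, and you do not explain what quantity, built from two nearby eigenvectors, would survive the limit and yield a contradiction.

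The paper avoids compactness altogether by a direct bootstrap at the discrete level. Fix $\tau>2$ and make the case distinction: if $|\lambda_h^n-\lambda_h^m|\ge\pi/(2\tau)$ there is nothing to prove; if not, the smallness $|\lambda_h^n-\lambda_h^m|<\pi/(2\tau)$ is precisely the hypothesis that makes a trapezoidal quadrature estimate (Lemma~\ref{le_c1}) valid for functions of the form $|b_1e^{i\nu_1t}+b_2e^{i\nu_2t}|^2$. This lemma is the crux: it kills the boundary error terms arising in the discrete lateral-energy computation (Step~1 of Proposition~\ref{prop:11}) when the solution contains only two modes, yielding a two-mode observability inequality \eqref{eq:prop10b} with a constant $C_\tau$ depending only on $\tau$ and $\|a\|_{L^\infty}$. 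From \eqref{eq:prop10b} one then \emph{extracts} a quantitative gap (Lemma~\ref{lemma:estgap} and Proposition~\ref{prop:12}) by a purely algebraic argument, using the direct inequality \eqref{eq.obsingeig} to control the weights $\phi_1^n$. The eigenvector lower bound \eqref{eq:ing1} is obtained the same way with a single mode. Thus the lateral-energy identity is used not to feed a limiting argument but to prove a finite-$h$ inequality for one- and two-mode solutions, from which both spectral facts follow constructively.
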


\begin{remark} One of the most remarkable aspects of \eqref{eq:prop10} is the independence of the constant $\kappa_0$ on the discretization parameter $h$. This property does not hold in the case of centered finite differences or finite elements schemes. Indeed, as shown in \cite{IZ}, the corresponding observability constant explodes as $h$ tends to zero. This is mainly due to the fact that there exist spurious numerical high eigenvalues which are not uniformly separated and, consequently, are not well observable from the boundary. In the case of mixed finite elements, although the high frequencies are still poorly approximated, the uniform gap property of the continuous case is maintained.

In the case of inverse problems, an observation which is not uniform with respect to $h$ does not allow to reconstruct correctly the unknown terms. Consequently, an uniform observability inequality like \eqref{eq:prop10} represents an important step in order to solve numerically an inverse problem and illustrates the utility of the mixed finite elements in this type of endeavors. 
\end{remark}

\section{Proof of the uniform observability result}\label{sec:4}

In this section we prove Proposition \ref{prop:10}. As we mentioned above this is equivalent to Theorem \ref{te.obsineg}. The proof is easily obtained by showing that there exists $h_0>0$ such that for any $h\in(0,h_0)$ the following two properties hold:
\begin{itemize}
\item A uniform bound from below for the eigenvectors, i.e.
\begin{equation} \label{eq:ing1}
\left| \frac{\phi^{n}_{1}}{h\sqrt{\mu_h^{|n|}}} \right|^2+\left| \frac{\phi^{n}_{1}}{2} \right|^2 \geq c >0, \mbox{ for all $|n|\leq N$},
\end{equation}
Note that, apart from the first term representing the natural approximation of the normal derivative at $x=0$, it is necessary to add a second term to guarantee the uniformity of the constant $c$. This is due to the poor numerical approximation of the normal derivative of the highest eigenmodes and it was also observed in the case of the classical centered finite difference scheme (see, \cite[Lemma 1.1]{IZ}). As we have already mentioned, this second term comes naturally from the mass matrix associated to the mixed finite elements scheme.

\item The following result in nonharmonic Fourier series: there exist $T_0,C>0$, independent of $N$, such that 
\begin{equation} \label{eq:ing2}
\sum_{1\leq |n|\leq N} |b^n|^2 \leq C\int_0^{T} \left| \sum_{1\leq |n|\leq N} b^n e^{-i\lambda^n_h
t} \right|^2\,{\rm d}t, 
\end{equation}
holds for any $T>T_0$ and all $(b^n)_{1\leq |n|\leq N} \subset \mathbb{C}$.
\end{itemize}

The rest of this section is devoted to prove \eqref{eq:ing1} and \eqref{eq:ing2}. We start with (\ref{eq:ing2}). According to Ingham's theorem \cite{I}, \eqref{eq:ing2} holds for $T_0=\frac{2\pi}{\gamma_0}$, where $\gamma_0$ is the spectral gap of the eigenvalues corresponding to the operator $\mathcal{A}_h$. The fact that this gap is uniform with respect to $N$ follows from the following theorem. 

\begin{theorem} \label{te:spectralgap} Assume that \eqref{eq:pot} holds. There exist two positive constants $h_0$ and $\gamma_0$ such that for any two different eigenvalues $\lambda_h^n$ and $\lambda_h^m$
of \eqref{eq.opmat}, we have that
\begin{equation}\label{eq.gap2eig}
\left|\lambda_h^n-\lambda_h^m\right|\geq \gamma_0 \qquad (h\in(0,h_0)).
\end{equation}
\end{theorem}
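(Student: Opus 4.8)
The plan is to argue by contradiction, using a dichotomy between low and high frequencies, with the lateral energy estimates already encoded in Lemma \ref{te.obsing} as the main analytic tool.

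First I would reduce the statement to consecutive eigenvalues of the same sign. Since $\lambda_h^n=\mathrm{sgn}(n)\sqrt{\mu_h^{|n|}}$, any two eigenvalues of opposite sign are separated by $\sqrt{\mu_h^{|n|}}+\sqrt{\mu_h^{|m|}}\ge 2\sqrt{\mu_h^1}$, and a uniform lower bound $\mu_h^1\ge c_0>0$ follows from the discrete Poincar\'e inequality $\langle(K_h+L_h)U,U\rangle\ge\langle K_h U,U\rangle\ge c_0\langle M_hU,U\rangle$, valid because $K_h$ alone controls $M_h$ uniformly in $h$ and $L_h\ge 0$ by \eqref{eq:pot}. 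For same-sign eigenvalues the difference of two non-consecutive values is a sum of consecutive gaps, hence larger; so it suffices to bound $\sqrt{\mu_h^{n+1}}-\sqrt{\mu_h^n}$ from below, uniformly in $n$ and $h$. Suppose this fails: there are sequences $h_k\to 0$ and indices $n_k$ with $0<\sqrt{\mu_{h_k}^{n_k+1}}-\sqrt{\mu_{h_k}^{n_k}}\to 0$, and I would split according to whether $(\mu_{h_k}^{n_k})_k$ stays bounded or tends to $+\infty$.

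In the bounded case, the normalized eigenvectors, interpolated by the bases $\{\varphi_j\}$ and $\{\psi_j\}$, form a compact family in $H_0^1(0,1)$; the energy identity \eqref{eq.e2j} together with the lateral bound behind Lemma \ref{te.obsing} supply the needed estimates. Passing to the limit, the discrete equation \eqref{eq.e1j} converges to the Sturm--Liouville problem $-\phi''+a\phi=\mu\phi$ with $\phi(0)=\phi(1)=0$, and the two consecutive discrete eigenvalues produce, in the limit, two $L^2$-orthogonal eigenfunctions for a single eigenvalue $\mu$, contradicting the simplicity of the one-dimensional Dirichlet spectrum.

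The genuinely delicate case, and the main obstacle, is $\mu_{h_k}^{n_k}\to+\infty$, where no continuous limit is available and the potential is not a uniformly bounded perturbation relative to $M_h$, whose smallest eigenvalue degenerates like $h^3$ on the highest modes. Here I would exploit that, for large $\mu_h^n$, the term $a_j\phi_j^n$ in \eqref{eq.e1j} is of lower order: the estimate \eqref{eq.obsingeig} and the balance \eqref{eq.e2jineg} show the eigenfunction behaves, to leading order, like the explicit constant-coefficient mode, for which $\sqrt{\mu_h^n}=\frac{2}{h}\tan(n\pi h/2)$ and the consecutive gap equals $\frac{2\sin(\pi h/2)}{h\cos(n\pi h/2)\cos((n+1)\pi h/2)}\ge\frac{2}{h}\sin(\pi h/2)$, a quantity bounded below by a positive constant tending to $\pi$. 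Concretely, I would track the discrete Pr\"ufer phase of the eigenfunction across the grid: the lateral energy keeps its amplitude comparable from one end to the other uniformly in $\mu$, so the phase is an increasing function of $n$ whose increment per mode is bounded below, while the $O(\|a\|_\infty/\mu)$ corrections furnished by \eqref{eq.obsingeig} are too small to close this gap as $k\to\infty$. The heart of the argument is therefore to make this phase and perturbation analysis rigorous uniformly in both $h$ and $n$, without explicit eigenfunctions, which is precisely what the lateral energy technique of \cite{Z} provides.
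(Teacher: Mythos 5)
Your overall strategy (contradiction plus a low/high-frequency dichotomy, then a Pr\"ufer-type phase analysis) is genuinely different from the paper's, and the low-frequency half is plausible, but the high-frequency case --- which you correctly identify as the crux --- is not actually proved in your writeup; it is only announced. The paper never analyzes the phase of individual eigenfunctions at all. Instead it runs a bootstrap: if $|\lambda_h^n-\lambda_h^m|<\pi/(2\tau)$, then the \emph{two-mode} solution $b^ne^{-i\lambda_h^nt}\Phi_h^n+b^me^{-i\lambda_h^mt}\Phi_h^m$ satisfies a boundary observability inequality \eqref{eq:prop10b} with a constant independent of $h,n,m$ (Proposition \ref{prop:11}); this is where the lateral energy technique of \cite{Z} enters, and the closeness hypothesis \eqref{eq.gmic} is exactly what makes the quadrature estimate of Lemma \ref{le_c1} applicable so that the error terms $R_j$ in the discrete lateral energy inequality are nonpositive. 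Then Lemma \ref{lemma:estgap}, applied to that two-mode inequality with the choice $a\alpha_p=-b\alpha_q$, combined with the direct inequality for eigenfunctions \eqref{eq.obsingeig} --- whose role is precisely to cancel the factor $1/h^2+|\lambda_h^n|^2+|\lambda_h^m|^2$ in the denominator of \eqref{gaplm} --- yields the quantitative lower bound \eqref{eq:pp2}, and $\gamma_0$ is taken as in \eqref{eq:gamma0}. So closeness of two eigenvalues is turned against itself, with no spectral asymptotics needed.

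The concrete gap in your proposal is the claim that, when $\mu_{h_k}^{n_k}\to\infty$, the eigenfunction ``behaves to leading order like the explicit constant-coefficient mode'' and that a discrete Pr\"ufer phase argument with $O(\|a\|_{L^\infty}/\mu)$ corrections can be made uniform in $h$ and $n$. Neither \eqref{eq.obsingeig} nor \eqref{eq.e2jineg} gives any pointwise or phase information about $\Psi_h^n$: the first is a trace-type bound on the single component $\phi_1^n$ and the second relates global discrete $H^1$ and $L^2$ norms. Moreover, as you yourself observe, $\langle L_hU,U\rangle/\langle M_hU,U\rangle$ can be of order $h^{-2}$ on the highest modes, so the potential is not a small relative perturbation there and neither min--max nor a naive Pr\"ufer bookkeeping obviously closes; making this step rigorous is essentially an open sub-problem in your argument, and it is exactly the difficulty the paper's bootstrap is designed to avoid. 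Two smaller points: in the low-frequency case you still owe a proof of strong $L^2$ convergence of the $M_h$-orthonormal interpolants and of consistency of the trapezoidal potential discretization, so that normalization and orthogonality survive the limit; and your contradiction argument, even if completed, would give a non-quantitative $\gamma_0$, whereas the paper's yields the explicit value \eqref{eq:gamma0}.
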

\begin{proof} Given any $\tau>2$, suppose that $\lambda_h^n$ and $\lambda_h^m$ are two different eigenvalues with the property that 
\begin{equation}
\label{eq.gmic}\left|\lambda_h^n-\lambda_h^m\right|<\frac{\pi}{2\tau }.
\end{equation}
Let $\Phi^n_h$ and $\Phi^m_h$ be two unitary eigenfunctions of the operator ${\mathcal A}_h$  corresponding to the eigenvalues $\lambda_h^n$ and $\lambda_h^m$, respectively (see Lemma \ref{lemma.spec2} for the definition of the eigenfunctions and eigenvalues of the operator ${\mathcal A}_h$).  
We'll show that the following estimate holds true:
\begin{eqnarray}  \nonumber
 |b^n|^2+|b^m|^2 &\leq & C_\tau \left[ \int_0^\tau \left|  b^n e^{-i\lambda^n_h
t}\frac{\phi^{n}_{1}}{h\sqrt{\mu_h^{|n|}}}+ b^m e^{-i\lambda^m_h
t} \frac{\phi^{m}_{1}}{h\sqrt{\mu_h^{|n|}}} \right|^2 \,{\rm d}t  \right.  \\
\label{eq:prop10b}
&& \left. + \int_0^\tau \left|  b^n e^{-i\lambda^n_h
t} \mbox{\,sgn\,}(n) \frac{\phi^{n}_{1}}{2}+ b^m e^{-i\lambda^m_h
t} \mbox{\,sgn\,}(m) \frac{\phi^{m}_{1}}{2} \right|^2\,{\rm d}t \right] ,
\end{eqnarray} 
for all $b^n,b^m\in \mathbb{C}$, where $C_\tau$ is a positive constant independent of $h$, $n$ and $m$ (depending only on $\tau $ and the potential $a$). Moreover, from \eqref{eq:prop10b} we'll deduce that the following gap property holds 
\begin{equation} \label{eq:pp2}
|\lambda_h^n-\lambda_h^m|\geq \sqrt{\frac{3}{2C_\tau C_0 \tau^3}},
\end{equation}where $C_0$ and $C_\tau$ are the constants in \eqref{eq.obsingeig} and \eqref{eq:prop10b}, respectively. The conclusion of our theorem follows by taking \begin{equation}\label{eq:gamma0}\gamma_0=\min\left\{\frac{\pi}{2\tau} ,\, \sqrt{\frac{3}{2C_\tau C_0 \tau^3}}\right\}.\end{equation} Properties  \eqref{eq:prop10b} and \eqref{eq:pp2} are proved in Propositions \ref{prop:11} and \ref{prop:12} below. 
\end{proof}

\begin{proposition} \label{prop:11}
Assume that the hypotheses of Theorem  \ref{te:spectralgap} hold and consider $\tau>2$. If $\lambda^n_h$ and $\lambda_h^m$ are two different eigenvalues of \eqref{eq.opmat} verifying \eqref{eq.gmic}, then  estimate (\ref{eq:prop10b}) holds.
\end{proposition}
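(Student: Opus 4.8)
The plan is to turn the right-hand side of \eqref{eq:prop10b} into the squared norm of a single $\mathbb{C}^2$-valued exponential sum and then analyse the associated $2\times 2$ Gram matrix. Write $\delta=\lambda_h^n-\lambda_h^m$. Under \eqref{eq.gmic} we have $|\delta|<\pi/(2\tau)$; since two eigenvalues of opposite sign satisfy $|\lambda_h^n-\lambda_h^m|=\sqrt{\mu_h^{|n|}}+\sqrt{\mu_h^{|m|}}\ge 2\sqrt{\mu_h^1}$, and $\mu_h^1$ stays bounded away from $0$ uniformly in $h$, the hypothesis \eqref{eq.gmic} can hold only when $\mathrm{sgn}(n)=\mathrm{sgn}(m)$, so I may take both indices positive. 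For $k\in\{n,m\}$ I introduce the boundary vector $v_k=\bigl(A_k,\ \mathrm{sgn}(k)\tfrac{\phi_1^k}{2}\bigr)\in\mathbb{R}^2$ with $A_k=\tfrac{\phi_1^k}{h\sqrt{\mu_h^{|k|}}}$. The algebraic point is that $\mathrm{sgn}(k)\tfrac{\phi_1^k}{2}=A_k\cdot\tfrac{h\lambda_h^k}{2}$, so $v_k=A_k\,(1,\tfrac{h\lambda_h^k}{2})$ is a rescaled ``value plus frequency'' pair, and the bracket in \eqref{eq:prop10b} is exactly $\int_0^\tau\bigl\|b^n e^{-i\lambda_h^n t}v_n+b^m e^{-i\lambda_h^m t}v_m\bigr\|^2_{\mathbb{C}^2}\,{\rm d}t$.

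The core computation is to expand this integral explicitly. With $\int_0^\tau e^{-i\delta t}\,{\rm d}t=\tau e^{-i\delta\tau/2}s$, where $s=\tfrac{\sin(\delta\tau/2)}{\delta\tau/2}$, the quadratic form in $(b^n,b^m)$ is governed by the Hermitian matrix
\begin{equation*}
\mathcal{G}=\tau\begin{pmatrix}\|v_n\|^2 & s\,e^{-i\delta\tau/2}\,\langle v_n,v_m\rangle\\ s\,e^{i\delta\tau/2}\,\langle v_n,v_m\rangle & \|v_m\|^2\end{pmatrix},
\end{equation*}
and it suffices to bound $\lambda_{\min}(\mathcal{G})$ from below, for which I would use $\lambda_{\min}(\mathcal{G})\ge\det\mathcal{G}/\mathrm{tr}\,\mathcal{G}$. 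Writing $x=\tfrac{h\lambda_h^n}{2}$, $y=\tfrac{h\lambda_h^m}{2}$ and using $v_k=A_k(1,x_k)$ together with the identity $(1+x^2)(1+y^2)-(1+xy)^2=(x-y)^2$, the determinant factors neatly as
\begin{equation*}
\det\mathcal{G}=\tau^2 (A_nA_m)^2\bigl[(x-y)^2+(1-s^2)(1+xy)^2\bigr],
\end{equation*}
with both bracketed terms nonnegative and $(x-y)^2=\tfrac{h^2}{4}\delta^2$.

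Inserting the uniform two-sided bounds on $\|v_k\|^2=A_k^2(1+x_k^2)$ (bounded below by \eqref{eq:ing1} and above by the direct inequality \eqref{eq.obsingeig} of Lemma \ref{te.obsing}, uniformly in $h$, $k$ and $a$) into $\lambda_{\min}(\mathcal{G})\ge\det\mathcal{G}/\mathrm{tr}\,\mathcal{G}$ reduces \eqref{eq:prop10b} to a lower bound on the bracket $(x-y)^2+(1-s^2)(1+xy)^2$. This is the step I expect to be the genuine obstacle: the bracket vanishes as $\delta\to0$, so the Gram matrix degenerates exactly when the two frequencies coalesce, which is unavoidable since at $\delta=0$ the two exponentials coincide and $(b^n,b^m)$ cannot be recovered from the observation. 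The degeneration, however, is quadratic and completely explicit in $\delta$, and this is what makes the constant in \eqref{eq:prop10b} uniform once it is coupled to the gap: testing the inequality against the near-null direction $b^n=1/A_n,\ b^m=-1/A_m$ collapses its right-hand side to quantities of size $\mathcal{O}(\delta^2\tau^3)$, while the left-hand side $1/A_n^2+1/A_m^2$ stays bounded below through the upper bound on $A_n,A_m$ from \eqref{eq.obsingeig}, forcing $\delta$ to be bounded away from $0$. I would therefore establish \eqref{eq:prop10b} and the spectral gap \eqref{eq:pp2} in tandem, the determinant identity above serving as the common engine, with the explicit $\delta$-dependence of the bracket reconciling the degeneracy at coalescing frequencies with the uniformity carried out in Proposition \ref{prop:12}.
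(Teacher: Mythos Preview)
Your Gram matrix computation is correct and the determinant identity $(1+x^2)(1+y^2)-(1+xy)^2=(x-y)^2$ is a nice observation. The problem is what comes after: the bound $\lambda_{\min}(\mathcal G)\ge\det\mathcal G/\mathrm{tr}\,\mathcal G$ only yields
\[
|b^n|^2+|b^m|^2\le \frac{C}{\delta^2}\,[\text{right-hand side of \eqref{eq:prop10b}}],
\]
with $C$ uniform but an unavoidable factor $\delta^{-2}$, because the bracket $(x-y)^2+(1-s^2)(1+xy)^2$ is genuinely of order $\delta^2$ for small $\delta$. Proposition~\ref{prop:11} demands a constant $C_\tau$ independent of $n,m$, hence of $\delta$; your approach does not give this. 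Your proposed ``tandem'' resolution is circular: to turn the $C/\delta^2$ bound into a uniform one you need a lower bound on $\delta$, i.e.\ the gap \eqref{eq:pp2}; but the paper deduces \eqref{eq:pp2} \emph{from} the uniform constant in \eqref{eq:prop10b} (this is exactly Proposition~\ref{prop:12} via Lemma~\ref{lemma:estgap}, which your near-null test $b^n=1/A_n,\ b^m=-1/A_m$ rediscovers). You have correctly identified the bridge from Proposition~\ref{prop:11} to Proposition~\ref{prop:12}, but not the proof of Proposition~\ref{prop:11} itself. A secondary issue is that you invoke \eqref{eq:ing1}, which in the paper's logic is established only after Proposition~\ref{prop:11}, by the same technique restricted to a single mode; this could be repaired (the multiplier identity in the proof of Lemma~\ref{te.obsing} in fact gives a two-sided bound), but fixing it does not remove the $\delta^{-2}$.

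The paper's proof works by a completely different mechanism: a discrete lateral-energy (sidewise) estimate mimicking the continuous argument of \cite{Z}. One defines $E_j(s)$ and $F_j=\tfrac12\int_{\beta x_j}^{\tau-\beta x_j}E_j\,{\rm d}s$ and shows $\tfrac{F_j-F_{j-1}}{h}\le c_a\tfrac{F_j+F_{j-1}}{2}+R_j$, where $R_j$ is a trapezoidal-quadrature remainder. The smallness hypothesis \eqref{eq.gmic} enters only here, via Lemma~\ref{le_c1}, to guarantee $R_j\le 0$ for two-mode solutions; after that a discrete Gronwall inequality gives $F_j\le F_0\exp(\cdot)$ and hence \eqref{eq:prop10b} with $C_\tau$ depending only on $\tau,\beta,a_M$. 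The point is that the constant comes from energy propagation over the time interval $\tau>2$, not from spectral separation, so it is automatically uniform in $\delta$. This propagation argument is the missing idea in your proposal.
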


\begin{remark}\label{rem:11}
 Notice that estimate (\ref{eq:prop10b}) is a version of  \eqref{eq:prop10}  when we consider a solution consisting on only two terms.  A natural idea in order to prove the discrete observability inequality  \eqref{eq:prop10} is to mimic the strategy, based on lateral energy estimates, used to show  \eqref{eq_observab} for the continuous system \eqref{eq.wave} in \cite{Z}. Let us remind its main steps:
\begin{enumerate}
\item Define the function
\begin{equation} \label{eq_cce}
E(t,x):=\frac12 (|u_t(t,x)|^2+|u_x(t,x)|^2+\|a\|_{L^\infty(0,1)}|u(t,x)|^2),
\end{equation}
where $u$ is the solution of \eqref{eq.wave}, and consider for $T>2$ and $1<\beta<T/2$ 
\begin{equation} \label{eq_cce2}
F(x)=\int_{\beta x}^{T-\beta x} E(s,x) \,{\rm d}s.
\end{equation}
Prove that there exists a constant $c_a>0$, depending only on the norm $\|a\|_{L^\infty(0,1)}$, such that  
\begin{equation} \label{eq:gr1}
F'(x)\leq c_a F(x)
\end{equation}
\item Use Gronwall's inequality to obtain a new constant $c'_a=\exp(c_a)$ such that
\begin{equation} \label{eq:gr2}
F(x)\leq c'_a F(0).
\end{equation}
Then, we apply the conservation of the energy, i.e. $\mathcal{E}(t)=\int_0^1\tilde E(t,x)\; \,{\rm d}x=\mathcal{E}(0)$, where 
$$
\tilde E(t,x):=\frac12 (|u_t(t,x)|^2+|u_x(t,x)^2+a(x)|u(t,x)|^2) \leq E(t,x),
$$
to obtain inequality \eqref{eq_observab} as follows,
\begin{eqnarray} \nonumber 
(T-2\beta) \mathcal{E}(0) &=&\int_{\beta}^{T-\beta }\int_{0}^{1}\tilde E(t,x)\,{\rm d}x\,{\rm d}t\leq
\int_{0}^{1}\int_{\beta x}^{T -\beta x}E(s,x)\,{\rm d}s\,{\rm d}x \\ \label{eq:gr3}
&=&\int_{0}^{1}F(x)\,{\rm d}x\leq c_a' F(0) =c'_a\int_{0}^{T }E(s,0)\,{\rm d}s = \frac{c'_a}{2}\int_{0}^{T }|u_{x}(s,0)|^2\,{\rm d}s.
\end{eqnarray}
\end{enumerate}
Unfortunately, when trying the same strategy for the discrete system some extra terms appear in the first step of this process which do not allow to recover the natural discrete version of the inequality (\ref{eq:gr1}) directly. However, we can deal with these extra terms when  particular solutions with only two different modes are considered, as soon as the associated eigenvalues satisfy the hypotheses \eqref{eq.gmic}. We give the details of the proof below. 
\end{remark}

{\bf Proof of Proposition \ref{prop:11}.}
We divide the proof in different steps according to the strategy outlined in Remark \ref{rem:11} above. 

\bigskip

{\bf Step 1:} Let $U_h=\left[u_j\right]_{1\leq
j\leq N}$ be a particular solutions of equation \eqref{eq.general1}.  We first introduce the discrete versions of the function $E(t,x)$ in \eqref{eq_cce} by 
\begin{equation}\label{eq:ej}
E_{j}(s)=\left| \frac{u_{j+1}(s)-u_{j}(s)}{h}\right| ^{2}+\left|
\frac{u_{j+1}^{\prime }(s)+u_{j}^{\prime }(s)}{2}\right| ^{2}+a_{M}\left| \frac{u_{j+1}(s)+u_{j}(s)}{2}\right| ^{2},
\end{equation}
where $0\leq j\leq N$ and $a_M=\max_{j=0,...,N+1} |a_j|$. Let $1<\beta<\tau/2 $ and consider the discrete version of $F(x)$ in \eqref{eq_cce2}:
\[
F_{j}=\frac{1}{2}\int_{\beta x_{j}}^{\tau -\beta x_{j}}E_{j}(s)\,{\rm d}s.
\]
In this step we prove the following discrete version of \eqref{eq:gr1}: 
\begin{eqnarray} \nonumber
&& \frac{F_{j}-F_{j-1}}{h} \leq c_a\frac{F_{j}+F_{j-1}}{2}+ \frac{c_a}{4}\left[
\int_{\beta x_{j-1}}^{\beta x_{j}} E_j(s)\,{\rm d}s + \int_{\tau-\beta
x_{j}}^{\tau-\beta x_{j-1}} E_j(s)\,{\rm d}s \right] \\ \nonumber
&&+\frac{1}{2}\left[ \frac{\frac{E_{j}+E_{j-1}}{2}(\tau -\beta x_{j})+\frac{%
E_{j}+E_{j-1}}{2}(\tau -\beta x_{j-1})}{2}-\frac{1}{h}\int_{\tau
-\beta
x_{j}}^{\tau -\beta x_{j-1}}\frac{E_{j}+E_{j-1}}{2}(s)\,{\rm d}s\right] \\ \label{eq:stp1}
&&+\frac{1}{2}\left[ \frac{\frac{E_{j}+E_{j-1}}{2%
}(\beta x_{j})+\frac{E_{j}+E_{j-1}}{2}(\beta x_{j-1})}{2}-\frac{1}{h}%
\int_{\beta x_{j-1}}^{\beta x_{j}}\frac{E_{j}+E_{j-1}}{2}(s)\,{\rm d}s\right],
\end{eqnarray}%
where $c_a=\frac{3}{2}\sqrt{a_M}\left(1+\frac{h \sqrt{a_M}}{3}\right).$

Taking into account that $\beta x_{j-1}<\beta x_j <\tau-\beta x_j <\tau - \beta x_{j-1}$, we have
\begin{eqnarray} \nonumber
\frac{F_{j}-F_{j-1}}{h} &=&\frac{1}{4h}\int_{\beta x_{j}}^{\tau -\beta x_{j}}\left(
E_{j}(s)-E_{j-1}(s)\right) \,{\rm d}s+\frac{1}{4h}\int_{\beta x_{j-1}}^{\tau -\beta
x_{j-1}}\left( E_{j}(s)-E_{j-1}(s)\right) \,{\rm d}s \\ \label{eq_le_0}
&&-\frac{1}{4h}\int_{\beta x_{j-1}}^{\beta x_{j}}\left(
E_{j-1}(s)+E_{j}(s)\right) \,{\rm d}s-\frac{1}{4h}\int_{\tau -\beta x_{j}}^{\tau
-\beta x_{j-1}}\left( E_{j-1}(s)+E_{j}(s)\right) \,{\rm d}s.
\end{eqnarray}%
The last two terms here are already in the right hand side in \eqref{eq:stp1}. Thus, we only have to estimate the first two terms in the right hand side. Using the elementary relation $|z_1|^2-|z_2|^2=\Re ((z_1+z_2)(\overline{z_1-z_2}))$ and integrating by parts we obtain
\begin{eqnarray} \nonumber
&&\frac{1}{2h}\int_{\beta x_{j}}^{\tau -\beta x_{j}}\left(
E_{j}-E_{j-1}\right) \,{\rm d}s =\Re \int_{\beta x_{j}}^{\tau -\beta x_{j}}\left( \frac{%
u_{j+1}-2u_{j}+u_{j-1}}{h^{2}}-\frac{u_{j+1}^{\prime \prime }+2u_{j}^{\prime
\prime }+u_{j-1}^{\prime \prime }}{4} \right. \\ \nonumber
&& \left. +a_{M}\frac{u_{j+1}+2u_{j}+u_{j-1}}{4}\right) \frac{\overline{u_{j+1}-u_{j-1}}%
}{2h}\,{\rm d}s +\Re \left[ \frac{u_{j+1}^{\prime }+2u_{j}^{\prime }+u_{j-1}^{\prime }}{4}%
\frac{\overline{u_{j+1}-u_{j-1}}}{2h}\right] _{\beta x_{j}}^{\tau -\beta x_{j}} \\ \nonumber
&&=  \Re \int_{\beta x_{j}}^{\tau -\beta x_{j}}\left( a_j u_j +a_{M}\frac{u_{j+1}+2u_{j}+u_{j-1}}{4}\right) \frac{\overline{u_{j+1}-u_{j-1}}%
}{2h}\,{\rm d}s \\ \label{eq_le_1}
&&+\Re \left[ \frac{u_{j+1}^{\prime }+2u_{j}^{\prime }+u_{j-1}^{\prime }}{4}%
\frac{\overline{u_{j+1}-u_{j-1}}}{2h}\right] _{\beta x_{j}}^{\tau -\beta x_{j}}.
\end{eqnarray}%
We estimate separately the two terms in the right hand side. Concerning the first term, we have on one hand
\begin{eqnarray} \nonumber
&& \displaystyle a_{M}\Re \left(\frac{u_{j+1}+2u_{j}+u_{j-1}}{4}\frac{\overline{u_{j+1}-u_{j-1}}%
}{2h} \right) \\ \nonumber
&& \quad  \leq \frac{a_M}{4}  \left| \frac{u_{j-1}+u_j}{2} +
\frac{u_{j}+u_{j+1}}{2} \right|\left|  \frac{\overline{u_{j}-u_{j-1}}}{h} +
\frac{\overline{u_{j+1}-u_{j}}}{h}\right|\\ \nonumber  
&& \quad  \leq
\displaystyle \frac{\sqrt{a_M}}{4} \left( a_M \left| \frac{u_{j-1}+u_j}{2}
\right|^2 + \left|\frac{u_{j}-u_{j-1}}{h} \right|^2 + a_M \left|
\frac{u_{j}+u_{j+1}}{2} \right|^2 + \left|\frac{u_{j+1}-u_{j}}{h}
\right|^2  \right) \\ \label{eq_le_2}  && \quad \leq \displaystyle
\frac{\sqrt{a_M}}{4} (E_{j-1}+E_j),
\end{eqnarray}
where we have used Young's inequality. On the other hand, we have
\begin{eqnarray} \nonumber
&& a_j \Re \left( u_j \frac{\overline{u_{j+1}-u_{j-1}}}{2h} \right) \leq  \frac{a_M}{4} \left| \frac{u_{j-1}+u_j}{2} + \frac{u_{j}+u_{j+1}}{2} + \frac{u_{j}-u_{j-1}}{2} + \frac{u_{j}-u_{j+1}}{2}\right| \\ \nonumber
&&\quad \times \left(  \frac{|u_{j}-u_{j-1}|}{h} + \frac{|u_{j+1}-u_{j}|}{h}\right) \\ \nonumber
&& \leq  \frac{\sqrt{a_M}}{2} \left( a_M \frac{|u_{j-1}+u_j|^2}{4} + \frac{|u_{j}-u_{j-1}|^2}{h^2}  + a_M  \frac{|u_{j}+u_{j+1}|^2}{4} + \frac{|u_{j+1}-u_{j}|^2}{h^2} \right) \\ \nonumber
&& \quad+  \frac{ha_M}{4} \left( \frac{|u_{j}-u_{j-1}|^2}{h^2} +
\frac{|u_{j+1}-u_{j}|^2}{h^2}  \right) \\ \label{eq_le_3}
&& \leq \frac{\sqrt{a_M}}{2}(1+\frac{h \sqrt{a_M}}{2}) (E_{j-1}+E_j).
\end{eqnarray}
The second term in the right hand side in (\ref{eq_le_1}) is estimated from the inequality,
\begin{eqnarray} \nonumber
&&\Re \left(\frac{u_{j+1}^{\prime }+2u_{j}^{\prime }+u_{j-1}^{\prime }}{4}\frac{%
\overline{u_{j+1}-u_{j-1}}}{2h} \right) \\ \nonumber
&\leq &\frac{1}{2}\left| \frac{u_{j+1}^{\prime }+2u_{j}^{\prime
}+u_{j-1}^{\prime }}{4}\right| ^{2}+\frac{1}{2}\left| \frac{u_{j+1}-u_{j-1}}{%
2h}\right| ^{2} \\ \nonumber
&\leq &\left| \frac{u_{j+1}^{\prime }+u_{j}^{\prime }}{4}\right| ^{2}+\left|
\frac{u_{j}^{\prime }+u_{j-1}^{\prime }}{4}\right| ^{2}+\left| \frac{%
u_{j+1}-u_{j}}{2h}\right| ^{2}+\left| \frac{u_{j}-u_{j-1}}{2h}\right| ^{2} \\  \label{eq_le_4}
&\leq &\frac{E_{j}+E_{j-1}}{4}
\end{eqnarray}%
Combining (\ref{eq_le_2})-(\ref{eq_le_4}) we estimate the right hand side in (\ref{eq_le_1}) and we obtain
\begin{eqnarray*}
&& \frac{1}{2h}\int_{\beta x_{j}}^{\tau -\beta x_{j}}\left(
E_{j}-E_{j-1}\right) \,{\rm d}s \leq \frac{3}{4}\sqrt{a_M}(1+\frac{h \sqrt{a_M}}{3}) \int_{\beta x_{j}}^{\tau -\beta x_{j}} \frac{E_{j-1}+E_j}{2}\,{\rm d}s \\
&& \quad + \frac{E_{j}+E_{j-1}}{4} (\beta x_{j}) + \frac{E_{j}+E_{j-1}}{4} (\tau -\beta x_{j}).
\end{eqnarray*}
An analogous inequality is obtained if we integrate $E_j-E_{j-1}$ in the interval $(\beta x_{j-1},\tau-\beta x_{j-1})$. Therefore, formula (\ref{eq_le_0}) is estimated by,
\begin{align*} 
\frac{F_{j}-F_{j-1}}{h} &\leq \frac{1}{2}\left[ \frac{\frac{E_{j}+E_{j-1}}{2%
}(\beta x_{j})+\frac{E_{j}+E_{j-1}}{2}(\beta x_{j-1})}{2}-\frac{1}{h}%
\int_{\beta x_{j-1}}^{\beta x_{j}}\frac{E_{j}+E_{j-1}}{2}\,{\rm d}s\right] \\ 
&+\frac{1}{2}\left[ \frac{\frac{E_{j}+E_{j-1}}{2}(\tau -\beta x_{j})+\frac{%
E_{j}+E_{j-1}}{2}(\tau -\beta x_{j-1})}{2}-\frac{1}{h}\int_{\tau -\beta
x_{j}}^{\tau -\beta x_{j-1}}\frac{E_{j}+E_{j-1}}{2}\,{\rm d}s\right] \\
&+\frac{3}{8}\sqrt{a_M}(1+\frac{h \sqrt{a_M}}{3}) \left( \int_{\beta x_{j}}^{\tau -\beta x_{j}} \frac{E_{j-1}+E_j}{2}\,{\rm d}s + \int_{\beta x_{j-1}}^{\tau -\beta x_{j-1}} \frac{E_{j-1}+E_j}{2}\,{\rm d}s \right).
\end{align*}%
Finally, the last term can be written in terms of $F_j$ as follows
$$
\int_{\beta x_{j}}^{\tau -\beta x_{j}} \frac{E_{j-1}+E_j}{2}\,{\rm d}s +
\int_{\beta x_{j-1}}^{\tau -\beta x_{j-1}} \frac{E_{j-1}+E_j}{2}\,{\rm d}s
\leq 4 \frac{F_j+F_{j-1}}{2} + \int_{\beta x_{j-1}}^{\beta x_j}
E_j \,{\rm d}s + \int_{\tau \beta x_{j}}^{\tau -\beta x_{j-1}} E_j \,{\rm d}s.
$$
This concludes the proof of \eqref{eq:stp1} and the Step 1.

\bigskip

{\bf Step 2.} We prove that there exists $h_0>0$ such that, for any $h\in(0,h_0)$,  estimate \eqref{eq:stp1} implies that   
\begin{eqnarray} \label{eq:step2}
\frac{F_{j}-F_{j-1}}{h} &\leq &c_a\frac{F_{j}+F_{j-1}}{2}, \qquad
1\leq j \leq N+1,
\end{eqnarray}
when we consider particular solutions $U_h=(u_j)_{1\leq
j\leq N}$ of equation \eqref{eq.general1} of the form
\begin{equation}\label{eq.p}
\begin{pmatrix}U_h\\U_h'\end{pmatrix}(t)=b^n
e^{-i\lambda_h^n t}\Phi^n_h + b^m e^{-i\lambda_h^m
t}\Phi^m_h.
\end{equation}
The proof of \eqref{eq:step2} relays on estimating the last two terms in the right hand side of \eqref{eq:stp1}. This is obtained by using the following technical lemma which gives a nonstandard estimate of the error in the trapezoidal quadrature formula.

\begin{lemma} \label{le_c1} Let $r>0$, $t\geq 0$ and $\nu_1$, $\nu_2$ be two different real numbers such that,
\begin{equation}\label{eq:condgap}
|\nu_2-\nu_1| \leq \frac{\pi}{2t+r}.
\end{equation}
Then, the following estimate holds
\begin{equation} \label{desig}
\frac{f(t)+f(t+r)}{2} \leq \frac{1}{r}\int_{t}^{t+ r} f(s)\,{\rm d}s ,
\end{equation}
for any function $f(t)$ of the form
$$
f(t)=\left| b_1 e^{i\nu_1 t}+b_2 e^{i\nu_2t}  \right|^2,
$$
with $b_1,b_2\in\mathbb{C}$.
\end{lemma}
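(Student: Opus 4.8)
The plan is to reduce the claim to an elementary one–variable trigonometric inequality by exploiting that $f$ is, up to an additive constant, a single cosine. First I would expand the square: writing $\nu=\nu_2-\nu_1$ and $b_1\overline{b_2}=\rho e^{i\theta}$ with $\rho=|b_1||b_2|\ge0$,
\[
f(s)=|b_1|^2+|b_2|^2+2\Re\!\left(b_1\overline{b_2}\,e^{-i\nu s}\right)=C+A\cos(\nu s-\theta),
\]
where $C=|b_1|^2+|b_2|^2$ and $A=2\rho\ge0$, and note $C\ge A$. The constant $C$ contributes equally to both sides of \eqref{desig}, hence cancels, and since $A\ge0$ it suffices to prove \eqref{desig} for the single cosine $s\mapsto\cos(\nu s-\theta)$. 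Here $\nu\neq0$ because $\nu_1\neq\nu_2$, and replacing $\nu$ by $|\nu|$ is harmless as cosine is even.

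Next I would evaluate both sides explicitly and show they factor through a common \emph{midpoint phase}. Using the antiderivative $\tfrac1\nu\sin(\nu s-\theta)$ for the right–hand side, and the identities $\cos X+\cos Y=2\cos\tfrac{X+Y}2\cos\tfrac{X-Y}2$ and $\sin X-\sin Y=2\cos\tfrac{X+Y}2\sin\tfrac{X-Y}2$ for the two boundary values, and setting $\phi:=\nu\!\left(t+\tfrac r2\right)-\theta$ and $\alpha:=\tfrac{|\nu|\,r}{2}$, the trapezoidal term becomes $\tfrac12\bigl(\cos(\nu t-\theta)+\cos(\nu(t+r)-\theta)\bigr)=\cos\phi\cos\alpha$ while the mean becomes $\tfrac1r\int_t^{t+r}\cos(\nu s-\theta)\,{\rm d}s=\cos\phi\,\tfrac{\sin\alpha}{\alpha}$. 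Consequently \eqref{desig} is equivalent to
\[
\cos\phi\left(\frac{\sin\alpha}{\alpha}-\cos\alpha\right)\ge0 .
\]

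The second factor is dispatched by a standard scalar bound: $\tfrac{\sin\alpha}{\alpha}\ge\cos\alpha$ on $(0,\tfrac\pi2]$, equivalent to $\tan\alpha\ge\alpha$, proved by comparing derivatives from $0$. This is exactly where hypothesis \eqref{eq:condgap} enters: from $|\nu_2-\nu_1|\le\frac{\pi}{2t+r}$ one gets $|\nu|\,r\le|\nu|(2t+r)\le\pi$, so $\alpha\le\tfrac\pi2$ and the range is the admissible one. The delicate point, which I expect to be the main obstacle, is the nonnegativity of the remaining factor $\cos\phi$: the same bound yields $\nu\!\left(t+\tfrac r2\right)=\tfrac{|\nu|(2t+r)}2\le\tfrac\pi2$ for the phase before the shift by $\theta$, placing the midpoint argument of the cosine in a half–period and making $f$ concave at the midpoint of $[t,t+r]$ — precisely the sign condition $\cos\phi\ge0$ that forces the trapezoidal rule to underestimate the mean. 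I would therefore organize the argument so that \eqref{eq:condgap} is invoked exactly to confine this midpoint phase to $[-\tfrac\pi2,\tfrac\pi2]$; once both factors are nonnegative, their product gives \eqref{desig} and the lemma follows.
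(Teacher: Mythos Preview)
Your reduction to a single cosine and the factorization
\[
\frac{1}{r}\int_t^{t+r}\cos(\nu s-\theta)\,{\rm d}s-\frac{\cos(\nu t-\theta)+\cos(\nu(t+r)-\theta)}{2}=\cos\phi\left(\frac{\sin\alpha}{\alpha}-\cos\alpha\right)
\]
are correct, and the paper's proof follows essentially the same path (it reduces to $f(t)=|be^{i\zeta t}+1|^2$ and arrives at the same product, implicitly with $\theta=0$). The gap is in your treatment of the factor $\cos\phi$. You correctly identify this as the ``delicate point'' and observe that \eqref{eq:condgap} gives $|\nu|(t+r/2)\le\pi/2$, but then assert that this confines the \emph{shifted} midpoint phase $\phi=\nu(t+r/2)-\theta$ to $[-\pi/2,\pi/2]$. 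It does not: the phase $\theta=\arg(b_1\overline{b_2})$ is completely unconstrained, since $b_1,b_2\in\mathbb{C}$ are arbitrary.

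In fact the lemma as stated admits counterexamples. Take $t=0$, $r=1$, $\nu_1=0$, $\nu_2=1$ (so $|\nu_2-\nu_1|=1<\pi=\pi/(2t+r)$) and $b_1=1$, $b_2=-1$; then $f(s)=|1-e^{is}|^2=2(1-\cos s)$ is convex on $[0,1]$ and
\[
\frac{f(0)+f(1)}{2}=1-\cos 1\approx 0.460\ >\ 2(1-\sin 1)\approx 0.317=\int_0^1 f(s)\,{\rm d}s.
\]
Here $\theta=\arg(-1)=\pi$, so $\phi=\tfrac12-\pi$ and $\cos\phi<0$. The paper's own proof has the very same gap: writing $f(t)=b^{2}+1+2b\cos(\zeta t)$ is only valid for real $b$, and the passage to \eqref{eq:desig1} silently drops the factor $2b$, which can be negative. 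So neither argument establishes \eqref{desig} for general $b_1,b_2\in\mathbb{C}$; an additional hypothesis controlling the phase --- for instance $\Re\bigl(b_1\overline{b_2}\,e^{-i\nu(t+r/2)}\bigr)\ge 0$, equivalently $\cos\phi\ge 0$ --- would be needed, and one would then have to check that such a condition is met in the application to $E_j$ in Step~2 of Proposition~\ref{prop:11}.
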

\begin{proof} Note that \eqref{desig} holds (with equality) if $b_1$ or $b_2$ is zero. Therefore, it is sufficient to show  \eqref{desig} for functions of the form $f(t)=\left|b e^{i\zeta t}+1  \right|^2$. In this case we have that
$$
f(t)=b^2+1+2b\cos(\zeta t),
$$
and \eqref{desig} is equivalent to 
\begin{equation}\label{eq:desig1}
 \frac{2}{\zeta r} \cos\left(\frac{\zeta(2t+r)}{2}\right)\left[ \sin\left(\frac{\zeta r}{2}\right)- \frac{\zeta r}{2}\cos\left(\frac{\zeta r}{2}\right)\right]\geq 0.
\end{equation}
It follows that, under the hypothesis \eqref{eq:condgap}, estimate \eqref{desig} holds and the proof of the lemma is complete.
\end{proof}

We now continue with the proof of estimate \eqref{eq:step2}. By the Step 1 above it suffices to see that $R_j\leq 0$, where
\begin{eqnarray} \nonumber
R_j&:=& \frac{1}{2}\left[ \frac{\frac{E_{j}+E_{j-1}}{2%
}(\beta x_{j})+\frac{E_{j}+E_{j-1}}{2}(\beta x_{j-1})}{2}-\frac{1}{h}%
\int_{\beta x_{j-1}}^{\beta x_{j}}\frac{E_{j}+E_{j-1}}{2}\,{\rm d}s\right] \\ \nonumber
&&+\frac{1}{2}\left[ \frac{\frac{E_{j}+E_{j-1}}{2}(\tau -\beta x_{j})+\frac{%
E_{j}+E_{j-1}}{2}(\tau -\beta x_{j-1})}{2}-\frac{1}{h}\int_{\tau
-\beta x_{j}}^{\tau -\beta
x_{j-1}}\frac{E_{j}+E_{j-1}}{2}\,{\rm d}s\right] \\ \label{eq_neg}
 &&+\frac{c_a}{4}\left[ \int_{\beta x_{j-1}}^{\beta x_{j}}
E_j(s)\,{\rm d}s + \int_{\tau-\beta x_{j}}^{\tau-\beta x_{j-1}} E_j(s)\,{\rm d}s
\right].
\end{eqnarray}%
In order to estimate $R_j$, remark that $E_j$ can be written as follows
$$E_j(t)=\left| \frac{u_{j+1}(t)-u_{j}(t)}{h}\right| ^{2}+\left|\frac{u_{j+1}^{\prime }(t)+
u_{j}^{\prime }(t)}{2}\right| ^{2}+a_M\left|
\frac{u_{j+1}(t)+u_{j}(t)}{2}\right| ^{2}=$$
$$= \left| b^n\frac{\phi^n_{j+1}- \phi^n_{j}}{h\sqrt{2 \mu_h^n}}e^{-i\lambda_h^n t}+b^m\frac{\phi^m_{j+1}-
\phi^m_{j}}{h\sqrt{2 \mu_h^m}}e^{-i\lambda_h^m t}\right| ^{2}$$ $$+
\left|b^n \mbox{\,sgn\,}(n) \frac{\phi^n_{j+1}+\phi^n_{j}}{2\sqrt{2}}
e^{-i\lambda_h^n t} + b^m \mbox{\,sgn\,}(m)
\frac{\phi^m_{j+1}+\phi^m_{j}}{2\sqrt{2}} e^{-i\lambda_h^m t}\right|
^{2}$$ $$+a_{M}\left|
b^n\frac{\phi^n_{j+1}+\phi^n_{j}}{2\sqrt{2 \mu_h^n}} e^{-i\lambda_h^n
t}+b^m \frac{\phi^m_{j+1}+\phi^m_{j}}{2\sqrt{2 \mu_h^m}}
e^{-i\lambda_h^m t}\right| ^{2}=
$$
$$
=\sum_{k=1}^3 \left| \alpha_{n,j}^k e^{-i\lambda_h^n
t}+\alpha_{m,j}^k e^{-i\lambda_h^m t}\right| ^{2},
$$
where, for $l\in\{n,m\}$, we set
$$\alpha_{l,j}^1= b^l \frac{\phi^l_{j+1}-\phi^l_{j}}{h\sqrt{2 \mu_h^l}},\quad
\alpha_{l,j}^2=b^l \mbox{\,sgn\,}(l)\frac{\phi^l_{j+1}+\phi^l_{j}}{2\sqrt{2}},\quad
\alpha_{l,j}^3=b^l a_M \frac{\phi^l_{j+1}+\phi^l_{j}}{2\sqrt{2 \mu_h^l}}.
$$

We deduce that $E_j$ is the
sum of three terms that satisfy the hypothesis of Lemma
\ref{le_c1}. From (\ref{desig}) with $r=\beta h$ and $t\in
\left\{\beta x_{j-1},\tau-\beta x_j \right\}$, we deduce
\begin{eqnarray} \nonumber
R_j&\leq & \frac{1-\beta}{2\beta h}
\left( \int_{\beta x_{j-1}}^{\beta x_{j}}\frac{E_{j}+E_{j-1}}{2}\,{\rm d}s
+ \int_{\tau -\beta x_{j}}^{\tau -\beta
x_{j-1}}\frac{E_{j}+E_{j-1}}{2}\,{\rm d}s\right) \\ \nonumber
&&+\frac{c_a}{4} \left[ \int_{\beta x_{j-1}}^{\beta x_{j}}
E_j(s)\,{\rm d}s + \int_{\tau-\beta x_{j}}^{\tau-\beta x_{j-1}} E_j(s)\,{\rm d}s
\right]\\ \nonumber
 &\leq & \left( \frac{1-\beta}{2\beta h}+\frac{c_a}{2}\right) \left( \int_{\beta x_{j-1}}^{\beta
x_{j}}\frac{E_{j}+E_{j-1}}{2}\,{\rm d}s + \int_{\tau -\beta x_{j}}^{\tau
-\beta x_{j-1}}\frac{E_{j}+E_{j-1}}{2}\,{\rm d}s\right). \label{eq_neg2}
\end{eqnarray}%
Note that \eqref{eq.gmic} implies that condition \eqref{eq:condgap} from Lemma \ref{le_c1} is verified.  From the fact that $\beta>1$  it follows that $R_j\leq 0$ for sufficiently small $h$ and \eqref{eq:step2} holds.

\bigskip

{\bf Step 3.} In this last step we show discrete versions of \eqref{eq:gr2} and \eqref{eq:gr3} for particular solutions of the form \eqref{eq.p}. The first one is the following discrete version of Gronwall's inequality that is easily deduced from the discrete inequality \eqref{eq:step2},
\begin{equation} \label{eq_dgi}
F_{j}\leq F_{0} \exp \left( \frac{2jhc_a}{2-hc_a}\right), \quad
1\leq j \leq N+1.
\end{equation}
Concerning the discrete version of \eqref{eq:gr3} we prove the following,
\begin{equation} \label{eq:step33}
\tilde E_h(0)\leq \frac{2}{\tau-2\beta}\exp \left(\frac{2c_a}{2-hc_a}\right)
\int_0^\tau \left[ \left( \frac{u_1(s)}{h}\right)^2 + \left(
\frac{u_1'(s)}{2}\right)^2 +a_M \left( \frac{u_1(s)}{2}\right)^2
\right] \,{\rm d}s,
\end{equation}
where $\tilde E_h(s)$ is defined by
\begin{equation}\label{eq:et}
\tilde E_h(s):=\frac12 \left[ \left\langle U'_h(s),U'_h(s)\right\rangle_M+\left\langle U_h(s),U_h(s)\right\rangle_1 \right]=h\sum_{j=0}^{N} \tilde E_j (s),
\end{equation}
and
\[
\tilde E_{j}(s):=\frac12 \left(\left|
\frac{u_{j+1}^{\prime }(s)+u_{j}^{\prime }(s)}{2}\right| ^{2}+\left| \frac{u_{j+1}(s)-u_{j}(s)}{h}\right| ^{2}+a_{j}|u_{j}(s)|^{2}\right)
\]%
Note that in \eqref{eq:et} we introduce a new energy $\tilde E_h(s)$, based on $\tilde E_j$ instead of $E_j$ defined in \eqref{eq:ej}, which has the conservation property: $$\tilde E_h(t)=\tilde E_h(0), \quad t\geq 0.$$ In fact, this is easily obtained from system \eqref{eq.dis0} by multiplying the $j-$equation by $u'_j$ and adding in $j=1,...,N$. On the other hand, since $\tilde E_j(s) \leq E_j(s)$, for $s\geq 0$, by using the conservation property of $\tilde E_h(s)$ and \eqref{eq_dgi}, we can   deduce a discrete version of \eqref{eq:gr3} as follows
\begin{eqnarray*}
(\tau-2\beta) \tilde E_h(0)&=&\int_{\beta}^{\tau-\beta} \tilde E_h(s) \,{\rm d}s=h\int_{\beta}^{\tau-\beta} \sum_{j=0}^{N} \tilde E_j (s) \,{\rm d}s \leq h\int_{\beta}^{\tau-\beta} \sum_{j=0}^{N} E_j (s) \,{\rm d}s \\
&\leq& h\sum_{j=0}^{N} \int_{\beta x_j}^{\tau-\beta x_j }  E_j (s) \,{\rm d}s= h\sum_{j=0}^{N} F_j \leq h \sum_{j=0}^{N} F_0  \exp \left( \frac{2jhc_a}{2-hc_a}\right) \\
&\leq &  F_0 \exp \left( \frac{2c_a}{2-hc_a} \right) .
\end{eqnarray*}
This completes the proof of \eqref{eq:step33} for particular solutions of the form \eqref{eq.p}. 

To finish the proof of Proposition \ref{prop:11} we observe that, due to the orthogonality and normalization of the eigenfunctions $\Phi_h^n$ in the norm $\|\cdot\|_{1,M}$ the left hand side in \eqref{eq:step33} reads,
\begin{eqnarray*}
\tilde E_h(0)&=&\frac12  \left[ \left\langle U'_h(0),U'_h(0)\right\rangle_M+\left\langle U_h(0),U_h(0)\right\rangle_1 \right]=\frac12  \left\| \left( \begin{array}{c} U_h(0) \\ U'_h(0) \end{array} \right) \right\|_{1,M}^2 \\&=&\frac12\left(|b^n|^2+|b^m|^2 \right),
\end{eqnarray*}
for particular solutions of the form \eqref{eq.p}. This
coincides with the left hand side in \eqref{eq:prop10b}.

On the other hand, the last term in the integrand of the right hand side in \eqref{eq:step33} can be bounded by the first one if $h$ is sufficiently small. Therefore, this term is bounded, up to a constant, by
$$
\int_0^\tau \left[ \left( \frac{u_1(s)}{h}\right)^2 + \left(
\frac{u_1'(s)}{2}\right)^2 
\right] \,{\rm d}s,
$$
which is the term on the right hand side in \eqref{eq:prop10b}. This completes the proof of Proposition \ref{prop:11} with a constant $C_\tau=\frac{2}{\tau-2\beta}\exp \left( \frac{2c_a}{2-hc_a} \right) $. 

\bigskip

We now turn our attention to property \eqref{eq:pp2}. We first give the following lemma.

\begin{lemma}\label{lemma:estgap} Let $\tau>2$ and $h>0$. Suppose that, given two different real numbers
$p>q$ and two nonzero complex numbers $\alpha_p,\alpha_q$, there
exits a constant $C_\tau>0$ such that we have
\begin{equation}\label{insim}
|a|^2+|b|^2\leq C_\tau \int_0^\tau \left[ \frac{1}{h^2}\left| a \alpha_p
 e^{ip t}+b \alpha_q e^{iq t}\right|^2 + \left| a \alpha_p
 p e^{i p t}+b \alpha_q q e^{iq
t}\right|^2 \right]\, \,{\rm d}t\quad (a,b\in\mathbb{C}).\end{equation}
Then, the following inequality holds
\begin{equation}\label{gaplm}
|p-q|\geq
\gamma:=\sqrt{\frac{\frac{1}{2C_\tau \tau }\left(\frac{1}{|\alpha_p|^2}+\frac{1}{|\alpha_q|^2}\right)}{1+\frac{1}{6}\tau^2
\left(\frac{1}{h^2}+2(|q|^2+|p|^2)\right)}}.
\end{equation}
\end{lemma}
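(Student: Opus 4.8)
The plan is to exploit the fact that the hypothesis \eqref{insim} is assumed for \emph{every} $a,b\in\mathbb{C}$, and to feed into it a single well-chosen test pair that makes the left-hand side a fixed positive number while forcing the right-hand integrand to be of order $(p-q)^2$. The crucial (and essentially only non-routine) step is the choice $a=1/\alpha_p$ and $b=-1/\alpha_q$, which is legitimate because $\alpha_p,\alpha_q\neq 0$. With it the left-hand side of \eqref{insim} becomes exactly $\frac{1}{|\alpha_p|^2}+\frac{1}{|\alpha_q|^2}$, while the amplitudes cancel inside the integral and the two integrands reduce to $\frac{1}{h^2}\,|e^{ipt}-e^{iqt}|^2$ and $|p\,e^{ipt}-q\,e^{iqt}|^2$. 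The point of the sign in $b=-1/\alpha_q$ is precisely that both integrands then vanish when $p=q$, so that they are controlled by the gap $|p-q|$.

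Next I would estimate the two integrands by elementary means. For the first, $|e^{ipt}-e^{iqt}|=|e^{i(p-q)t}-1|=2\,|\sin((p-q)t/2)|\le |p-q|\,t$. For the second, the decomposition $p\,e^{ipt}-q\,e^{iqt}=p\,(e^{ipt}-e^{iqt})+(p-q)e^{iqt}$ together with the previous bound gives $|p\,e^{ipt}-q\,e^{iqt}|\le |p-q|\,(|p|\,t+1)$; squaring and using $(x+y)^2\le 2(x^2+y^2)$ together with $|p|^2\le |p|^2+|q|^2$ yields $|p\,e^{ipt}-q\,e^{iqt}|^2\le 2(p-q)^2\big((|p|^2+|q|^2)t^2+1\big)$. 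Integrating over $(0,\tau)$ and using $\int_0^\tau t^2\,{\rm d}t=\tau^3/3$ bounds the whole right-hand side of \eqref{insim} by
\[
2C_\tau\,\tau\,(p-q)^2\left[1+\frac{\tau^2}{6}\left(\frac{1}{h^2}+2(|q|^2+|p|^2)\right)\right].
\]

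Combining this with \eqref{insim} gives
\[
\frac{1}{|\alpha_p|^2}+\frac{1}{|\alpha_q|^2}\le 2C_\tau\,\tau\,(p-q)^2\left[1+\frac{\tau^2}{6}\left(\frac{1}{h^2}+2(|q|^2+|p|^2)\right)\right],
\]
and solving for $(p-q)^2$ and taking square roots produces exactly the bound \eqref{gaplm}.

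I do not expect a genuine analytic obstacle here, since everything reduces to substituting one explicit pair $(a,b)$ into the assumed inequality. The only point requiring care is bookkeeping: the asymmetric decomposition and the crude enlargement $|p|^2\le |p|^2+|q|^2$ have to be arranged so that the constants reproduce precisely the coefficients $\tfrac{1}{6}\tau^2$ and the factor $2(|q|^2+|p|^2)$ that appear in the denominator of $\gamma$. A more symmetric estimate would give a slightly sharper constant, so the real task is simply to match the exact form stated in \eqref{gaplm}.
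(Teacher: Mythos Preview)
Your proof is correct and follows essentially the same approach as the paper: both substitute the test pair satisfying $a\alpha_p=-b\alpha_q$ into \eqref{insim} and estimate the resulting integrals, the only cosmetic difference being that the paper first integrates $|1-e^{i\zeta t}|^2$ exactly and then invokes $1-\tfrac{\sin t}{t}\le \tfrac{t^2}{6}$, whereas you use the equivalent pointwise bound $|e^{i\theta}-1|\le|\theta|$ before integrating. Your decomposition of the second integrand uses $p$ where the paper uses $q$, but after your enlargement $|p|^2\le |p|^2+|q|^2$ (and the paper's corresponding symmetrization) both arrive at exactly the same constant in \eqref{gaplm}.
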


\begin{proof} By taking $a\alpha_p=-b\alpha_q$, from \eqref{insim} we deduce that the following
relation holds true
$$
\int_0^\tau \left[\frac{1}{h^2}\left| 1-e^{i\, \zeta t}\right|^2
+\left|
 p - q e^{i\, \zeta t}\right|^2\right]\, \,{\rm d}t\geq \frac{1}{C_\tau}
 \left(\frac{1}{|\alpha_p|^2}+\frac{1}{|\alpha_q|^2}\right),
$$
where $\zeta=p-q$. Since $\zeta\in\mathbb{R}$, from the last
estimate it follows that
\begin{equation}\label{ze1}
|\zeta|^2 +
\left(\frac{1}{h^2}+2|q|^2\right)\left(1-\frac{\sin(\zeta
\tau)}{\zeta \tau}\right)\geq
\frac{1}{2C_\tau \tau}\left(\frac{1}{|\alpha_p|^2}+\frac{1}{|\alpha_q|^2}\right).
\end{equation}
By taking into account the inequality
$$1-\frac{\sin\, t}{t}\leq \frac{1}{6} t^2\qquad (t\geq 0)$$ we
deduce from \eqref{ze1} that \eqref{gaplm} holds.
\end{proof}

Now, we can formally state and give the proof of  \eqref{eq:pp2}.

\begin{proposition} \label{prop:12}
Assume that the hypotheses of Theorem  \ref{te:spectralgap} hold and let $\tau>2$. If $\lambda^n_h$ and $\lambda_h^m$ are two different eigenvalues of \eqref{eq.opmat} verifying \eqref{eq.gmic}, then \eqref{eq:pp2} is verified.
\end{proposition}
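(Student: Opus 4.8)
The plan is to recognize that estimate \eqref{eq:prop10b}, already proved in Proposition \ref{prop:11}, is (after a harmless weakening) an instance of the abstract hypothesis \eqref{insim} of Lemma \ref{lemma:estgap}, so that the gap bound \eqref{gaplm} applies; the remaining work is to simplify \eqref{gaplm} into the explicit uniform bound \eqref{eq:pp2} by feeding in the eigenfunction estimate \eqref{eq.obsingeig}. I would begin by fixing the identifications forced by the exponentials: put $a=b^n$, $b=b^m$ and, using \eqref{eq.lamba}, take $p=-\lambda_h^n$, $q=-\lambda_h^m$, relabelling $n$ and $m$ if necessary so that $p>q$ (legitimate since the eigenvalues are distinct and the target \eqref{eq:pp2} is symmetric in $n,m$). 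Matching the first integrals then forces $\alpha_p=\phi^n_1/\sqrt{\mu_h^{|n|}}$ and $\alpha_q=\phi^m_1/\sqrt{\mu_h^{|m|}}$, and these are nonzero: by the simplicity argument in Lemma \ref{lemma.spec1}, a nontrivial eigenvector cannot vanish in its first component, so $\phi^n_1\neq0$ and $\phi^m_1\neq0$.

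Next I would deal with the one genuine mismatch, in the second integrals. Using \eqref{eq.lamba} one computes $\alpha_p p=-\mathrm{sgn}(n)\,\phi^n_1$, which is exactly $-2$ times the coefficient $\mathrm{sgn}(n)\,\phi^n_1/2$ appearing in \eqref{eq:prop10b} (and likewise for $q$). Hence the second integrand of \eqref{insim} equals four times that of \eqref{eq:prop10b}, while the first integrands coincide; therefore the right-hand side of \eqref{insim} dominates the right-hand side of \eqref{eq:prop10b}. Since \eqref{eq:prop10b} bounds $|b^n|^2+|b^m|^2$ by its own right-hand side, it a fortiori bounds it by the larger one, so \eqref{insim} holds with the very same constant $C_\tau$. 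Lemma \ref{lemma:estgap} then gives $|\lambda_h^n-\lambda_h^m|=|p-q|\geq\gamma$ with $\gamma$ as in \eqref{gaplm}.

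It remains to bound $\gamma$ below by $\sqrt{3/(2C_\tau C_0\tau^3)}$, and this is the only real computation and the main obstacle, because $\gamma$ still depends on $h,n,m$ through $|\alpha_p|,|\alpha_q|$ and through $|p|^2=\mu_h^{|n|}$, $|q|^2=\mu_h^{|m|}$. The crux is that \eqref{eq.obsingeig} furnishes a lower bound of the right shape: $1/|\alpha_p|^2=\mu_h^{|n|}/|\phi^n_1|^2\geq C_0^{-1}\bigl(\mu_h^{|n|}+h^{-2}\bigr)$, the potential-dependent factor $1+\|a\|_{L^\infty}^2/\mu_h^{|n|}$ in \eqref{eq.obsingeig} being uniformly bounded since the lowest eigenvalue $\mu_h^1$ stays away from $0$ as $h\to0$ (so it may be absorbed into $C_0$), and similarly for $\alpha_q$. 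Consequently the numerator of \eqref{gaplm} grows like $\mu_h^{|n|}+\mu_h^{|m|}+2h^{-2}$, matching the denominator $1+\frac{\tau^2}{6}\bigl(h^{-2}+2\mu_h^{|n|}+2\mu_h^{|m|}\bigr)$.

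Finally I would carry out the cross-multiplication. Reducing the desired bound $\gamma^2\geq 3/(2C_\tau C_0\tau^3)$ to $\tau^2\bigl(\mu_h^{|n|}+\mu_h^{|m|}+2h^{-2}\bigr)\geq 3+\frac{\tau^2}{2}\bigl(h^{-2}+2\mu_h^{|n|}+2\mu_h^{|m|}\bigr)$, the eigenvalue terms cancel identically and what remains is $\frac{3\tau^2}{2h^2}-3\geq0$, i.e. $\tau^2\geq 2h^2$, which holds trivially for $\tau>2$ and $h\in(0,h_0)$. Tracking the residual factors $1/(2C_\tau\tau)$ and $3/\tau^2$ — the latter produced by the elementary inequality $1-\sin t/t\leq t^2/6$ built into \eqref{gaplm} — then yields exactly $|\lambda_h^n-\lambda_h^m|\geq\sqrt{3/(2C_\tau C_0\tau^3)}$, which is \eqref{eq:pp2} and completes the proof.
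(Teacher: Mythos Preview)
Your proof is correct and follows essentially the same approach as the paper: apply Lemma~\ref{lemma:estgap} with the identifications $\alpha_p=\phi^n_1/\sqrt{\mu_h^{|n|}}$, $\alpha_q=\phi^m_1/\sqrt{\mu_h^{|m|}}$, then feed in the direct inequality \eqref{eq.obsingeig} to simplify \eqref{gaplm} into \eqref{eq:pp2}. You are in fact more careful than the paper on several points it leaves implicit---the factor-of-four mismatch between the second integrands of \eqref{insim} and \eqref{eq:prop10b}, the need for a uniform lower bound on $\mu_h^1$ to absorb the potential term $1+\|a\|_{L^\infty}^2/\mu_h^n$ into $C_0$, and the explicit cross-multiplication showing that the eigenvalue contributions cancel and leave only the harmless condition $\tau^2\geq 2h^2$.
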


\begin{proof} By hypotheses, there exists $C_\tau>0$ such that \eqref{eq:prop10b} holds. From Lemma \ref{lemma:estgap} we obtain that
\begin{equation}\label{gaplm2}
|\lambda_h^n-\lambda_h^m|\geq
\sqrt{\frac{\frac{1}{2C_\tau \tau}\left(\frac{1}{(\phi^n_{1})^2}+\frac{1}{(\phi^m_{1})^2}\right)}{1+\frac{1}{6}\tau^2
\left(\frac{1}{h^2}+2\left(|\lambda_h^n|^2+|\lambda_h^m|^2\right)\right)}}.
\end{equation}
From Lemma \ref{te.obsing}, we have that, for $h\in(0,h_0)$,
the following relation is verified
\begin{equation}\label{gaplm3}\frac{1}{(\phi^n_{1})^2}+\frac{1}{(\phi^m_{1})^2}\geq \frac{2}{C_0}\left(\frac{2}{h^2}+|\lambda_h^n|^2
+|\lambda_h^m|^2\right).\end{equation}
From \eqref{gaplm2} and \eqref{gaplm3} we deduce that  \eqref{eq:pp2} holds and the proof of Proposition \ref{prop:12} is completed.\end{proof}

Finally, it remains to check property \eqref{eq:ing1}. This is easily obtained by considering a variant of the 
%
%
proof of Proposition \ref{prop:11} with particular solutions containing one single eigenfunction, i.e.
\begin{equation}\label{eq.p_s}
\left(\begin{array}{c}U_h\\U_h'\end{array}\right)(t)=
e^{-i\lambda_h^n t}\Phi^n_h.
\end{equation}

\begin{remark}\label{rem:top} We have proved that the uniform observability inequality \eqref{eq:obsineq} holds for any $T>T_0=\frac{2\pi}{\gamma_0}$, where $\gamma_0$ is given by \eqref{eq:gamma0}. Notice that, Proposition \ref{prop:11} shows that, when considering solutions of the discrete system with only two eigenmodes, the uniform observability inequality holds for any $\tau>2$, which represents the optimal time condition in the continuous wave equation. However,  Proposition \ref{prop:12}, based on inequality \eqref{eq:prop10b}, does not allow us to obtain the optimal value of the gap   $\gamma_0$ and, consequently, of the optimal time $T_0$.
\end{remark}

\section{Inverse source problem. Proof of Theorems \ref{te:estab0} and \ref{te:coninv}}

In this section we prove the two main results concerning the inverse source problem given in Section \ref{sec:2}.

\

\noindent{\it Proof of Theorem \ref{te:estab0}.} Due to the linearity of the problem, if $F_h=\widehat{F}_h-\widetilde{F}_h$, then $V_h=\widehat{V}_h-\widetilde{V}_h$ is the solution of system
\eqref{eq.us00}. Let us consider the solution $Z_h=\left[z_j\right]_{1\leq j\leq N}$ of the following homogeneous system  
\begin{equation}\label{eq.zh0}
\left\{
\begin{array}{l}
M_{h}Z_h''(t)+ K_h Z_h(t)+L_hZ_h(t)=0\qquad (t\in (0,T)),\\
Z_h(0)=0,\quad Z_h'(0)=F_h
\end{array}
\right.        
\end{equation}
and remark that 
\begin{equation}
V_h(t)=\int_0^t \lambda(t-s) Z_{h}(s)\,{\rm d}s \qquad (t\in(0,T)).
\end{equation}
If we denote $H^1_L(0,T;\mathbb{C})=\left\{ \zeta \in H^1(0,T;\mathbb{C})\, :\,  \zeta(0)=0 \right\}$, from the theory of Volterra integral operators (see, for instance,
Kress \cite[pp. 33–34]{kress}) it follows that there exists a constant $C>0$, independent of $h$, such that,
\begin{equation}\label{eq:inin1}
 \| \widehat{Y}_h-\widetilde{Y}_h \|_{[L^2(0,T)]^2}=\left\|  \int_0^t \lambda(t-s) \left(\begin{matrix}\smallskip \frac{z_1(s)}{h}\\  \frac{z_1'(s)}{2}\end{matrix}\right) \,{\rm d}s \right \|_{[H^1_L(0,T;\mathbb{C})]^2}   \geq C \left\| \left(\begin{matrix}\smallskip \frac{z_1}{h}\\ \frac{z_1'}{2}\end{matrix}\right)\right\|_{[L^2(0,T)]^2}.
\end{equation}
Since, according to Theorem \ref{te.obsineg},  the solution $Z_h$ of \eqref{eq.zh0} verifies the uniform observability inequality
$$
\left\| \left(\begin{matrix}\smallskip \frac{z_1}{h}\\ \frac{z_1'}{2}\end{matrix}\right)\right\|_{[L^2(0,T)]^2}^2\geq \kappa_0\, \|F_h\|_M^2,
$$
we deduce from \eqref{eq:inin1} that \eqref{eq:estab0} holds with a constant $\kappa=\frac{1}{C\kappa_0^{1/2}}$ independent of $h$.
\wfin

\begin{remark} Inequality \eqref{eq:estab0} in Theorem \ref{te:estab0} represents a stability result for the discrete inverse source problem with known intensity. It is worth mentioning that the constant $\kappa$ from inequality \eqref{eq:estab0} is independent of $h$ which is a direct consequence of the fact that the observability constant $\kappa_0$ in \eqref{eq:obsineq} has the same property. This will be of fundamental importance to demonstrate the convergence result from Theorem \ref{te:coninv} below. 
\end{remark}

\

\noindent{\it Proof of Theorem \ref{te:coninv}.} It is easy to see that ${\mathcal J}_h$ is continuous, strictly convex and coercive. Hence, ${\mathcal J}_h$ has a unique minimizer $\widehat{F}_h\in\mathbb{C}^N$.  

We consider the discretization $F_h=\left[f_j\right]_{1\leq j\leq N}$ of the source term $f$ given by \eqref{eq:fhdef} which satisfies the following convergence result
\begin{equation}\label{eq:cof}
f_h:= \sum_{j=1}^N f_{j}\psi_j \to f \mbox{ in }L^2(0,1) \mbox{ as }h\to 0.
\end{equation}
Let  $Y_h$ be the discrete observation corresponding to the solution $U_h$ of \eqref{eq.us00} with nonhomogeneous term $\lambda(t)M_h F_h$ and initial conditions $U_h^0$ and  $U_h^1$ given by \eqref{eq:datosdisc}. From Corollary \ref{cor:final} in the Appendix, it follows that 
\begin{equation}\label{eq:conv_nor_der}
\lim_{h\rightarrow 0}{\mathcal J}_h (F_h)=\frac{1}{2}\lim_{h\rightarrow 0} \left\| Y_h- y\right\|^2_{[L^2(0,T)]^2}=0.
\end{equation}
Since $\widehat{F}_h$ is a minimizer of ${\mathcal J}_h$, from \eqref{eq:conv_nor_der} we deduce that 
\begin{equation}\label{eq:conv_min}
\lim_{h\rightarrow 0}{\mathcal J}_h (\widehat{F}_h)=0.
\end{equation}

On the other hand, from \eqref{eq:estab0} in Theorem \ref{te:estab0}, we obtain that 
\begin{align*}
\|\widehat{f}_h-f_h\|_{L^2(0,1)}^2=\| \widehat{F}_h-F_h\|_M^2 &\leq \kappa^2 \| \widehat{Y}_h-Y_h \|_{[L^2(0,T)]^2}^2\leq 
4\kappa^2\left( {\mathcal J}_h (\widehat{F}_h)+{\mathcal J}_h (F_h)\right).
\end{align*}
Taking into account the independence of the constant $\kappa$ with respect to $h$, the above inequality, together with \eqref{eq:conv_nor_der} and \eqref{eq:conv_min}, implies that 
\begin{equation}\label{eq:conv_fh}
\lim_{h\rightarrow 0}\|\widehat{f}_h-f_h\|_{L^2(0,1)}=0.
\end{equation}
The above result and \eqref{eq:cof} show that the family $(\widehat{f}_h)_{h>0}$ converges to $f$ in $L^2(0,1)$ as $h$ tends to zero and the proof of the theorem is complete.
\wfin

\begin{remark}In the proof of Theorem  \ref{te:coninv}, one of the main ingredients is relation \eqref{eq:conv_nor_der} which states the convergence in $L^2(0,T)$ of the  normal derivative of the discrete solution to the continuous one, when the initial data is in the finite energy space $H_0^1(0,1)\times L^2(0,1)$. This type of result is not classical. It is known that, for the continuous problem with initial data in this energy space, the normal derivative is not more regular than $L^2(0,T)$. Moreover, this regularity it is not a consequence of the classical results of semigroup theory and it is usually referred to as ``hidden regularity''.  Therefore, the convergence \eqref{eq:conv_nor_der} represents a sharp result which needs a special treatment inspired in the multiplier techniques used for hyperbolic type equations. Its proof is given at the end of the Appendix in Theorem   \ref{te:codernor}.

\end{remark}

\section{Numerical experiments}

In this section we show some numerical experiments illustrating the convergence result of the inverse problem described above. To recover the source term form the boundary data we have implemented a classical gradient method to minimize the functional \eqref{eq:jn}. In particular we have taken  $\lambda \equiv 1$, the initial data zero, $T=3$ and the stop criterion is chosen when the gradient of the functional is lower than $10^{-6}$. 

\subsection{Smooth potential} \label{ssubsec:1}
In these experiments we consider that  the potential is given by $a(x)=1+0.5\sin(2\pi x)$.  Table \ref{table1} shows the $L^2-$norm of the difference between the source term and the reconstruction in two different situations, namely when the source term is a discontinuous function:
$$f(x)=20(x-1/2)^2\chi_{(1/4,3/4)}(x),$$ or a smooth function: $$g(x)=x(1-x)\left[5(x+0.1)^2+1/(x+0.1)\right],$$ respectively (see Figure \ref{fig:1}).  The three particular values chosen for the discretization parameter $h$ in Table \ref{table1} indicate at least a linear convergence rate of the method,  which becomes larger in the smoother case. 

\begin{figure}
    \centering
    \begin{tabular}{cc}
    \includegraphics[width=5cm]{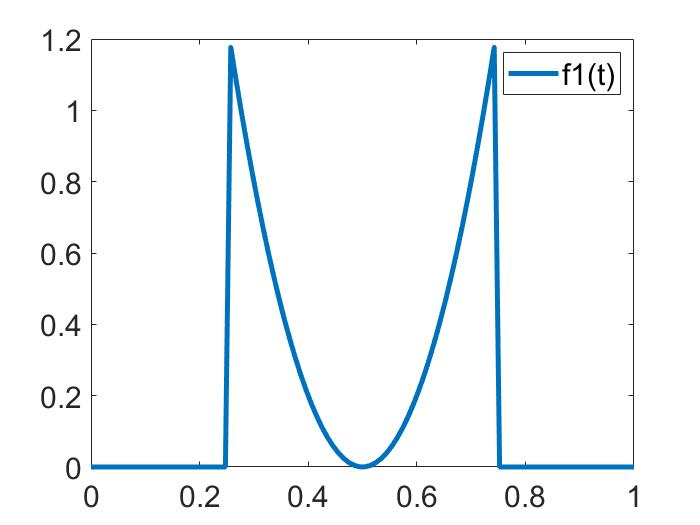} &
    \includegraphics[width=5cm]{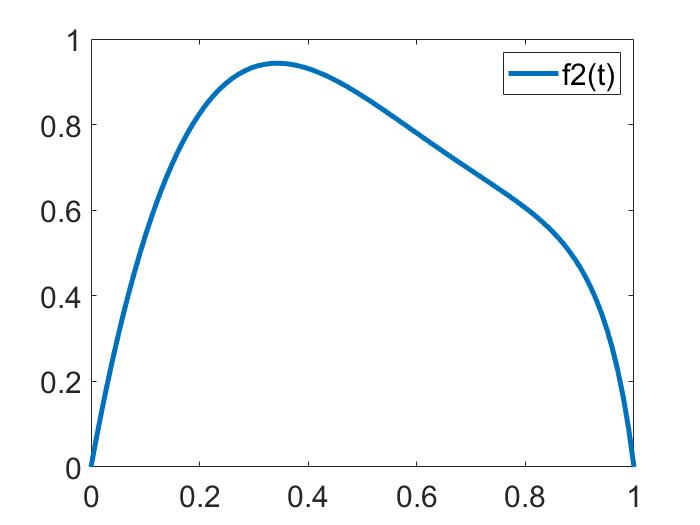} 
    \end{tabular}
    
    \caption{The two different source terms considered: a discontinuous one $f(x)$ (left) and a smooth one $g(x)$ (right).}
    \label{fig:1}
\end{figure}

\begin{table} \label{table:1}
\centering
\begin{tabular}{ |c|c|c| } 
				\hline
				$ h $ & $\| f-\widehat f_{h}\|_{L^2} $& $\| g - \widehat g_{h}\|_{L^2} $\\
				\hline
				$ 10^{-1} $	&$ 2.02\times10^{-1}  $& $ 3.13\times 10^{-2}  $\\ 
				$ 10^{-2} $ & $ 6.12\times10^{-2}  $ &  $ 7.09\times 10^{-4}  $\\
				$ 10^{-3} $&  $1.92\times10^{-2}$   & $ 4.05\times 10^{-6}$ \\ 
				\hline
    \end{tabular}
    \caption{Error in the numerical reconstruction of the source term for different values of $h$ and for two different source terms.}\label{table1}
\end{table}

\subsection{Noisy observation} To illustrate the stability of the method, we have considered three different noisy data. More precisely, instead of the observation  $y$ in \eqref{eq:jn}, we have taken
\begin{equation}\label{eq:nosydata}
y_{\delta}(t)=(1+\delta \zeta(t))y(t),
\end{equation}
where $\zeta(t)\in {\mathcal N}(0,1)$ and $\delta\in\{0.05,\,0.1,\,0.25\}$ gives  different values of relative error in the data. Here ${\mathcal N}(0,1)$ represents the normal distribution with zero mean and standard deviation one. The numerical results are summarized in Table \ref{table2} and show that the relative noise on the measurement has an influence on the error proportional
to its amplitude. This is a consequence of the uniform stability property proved in Theorem \ref{te:estab0}.

\begin{table} \label{table:2}
\centering
\begin{tabular}{ |c|c|c| c|} 
				\hline
				$ h $ & $\| f-\widehat f_{h,0.05}\|_{L^2} $& $\| f-\widehat f_{h,0.10}\|_{L^2} $& $\| f-\widehat f_{h,0.25}\|_{L^2} $\\
				\hline
				$ 10^{-1} $	&$ 1.96\times10^{-1}    $& $ 2.37\times 10^{-1} $&
                $ 2.72\times 10^{-1} $\\ 
				$ 10^{-2} $ & 
                $ 5.28\times10^{-2}  $ &  
                $ 7.53\times 10^{-2}$&$7.58\times10^{-2} $\\
				$ 10^{-3} $&  $2.00\times10^{-2} $  & $ 3.03\times 10^{-2} $  &
                $7.09 \times 10^{-2}$
                \\ 
				\hline
    \end{tabular}
    \caption{Error in the numerical reconstruction of the source term for different values of $h$ and for three different noisy source terms. Here $\widehat{f}_{h,\delta}$ is the numerical reconstruction of the source with each of the noisy data from \eqref{eq:nosydata}.}\label{table2}
\end{table}

\subsection{Discontinuous potential} 
We have tested the algorithm with a discontinuous potential function $a(x)=30\chi_{[0,0.5]}(x)+10\chi_{(0.5,1]}(x)$. Although the convergence of the method has been proved only for continuous potentials, the numerical results given in Table \ref{table3} are similar to those obtained in the continuous case.  This allows to conjecture that the convergence still holds for potentials with a finite number of discontinuities.

\begin{table} \label{table:3}
\centering
\begin{tabular}{ |c|c|c| } 
				\hline
				$ h $ & $\| f-\widehat f_{h}\|_{L^2} $& $\| g - \widehat g_{h}\|_{L^2} $\\
				\hline
				$ 10^{-1} $	&$ 1.62\times10^{-1}  $& $ 8.73\times 10^{-2}  $\\ 
				$ 10^{-2} $ & $ 4.38\times10^{-2}  $ &  $ 6.89\times 10^{-4}  $\\
				$ 10^{-3} $&  $1.42\times10^{-2}$   & $ 6.98\times 10^{-6}$ \\ 
				\hline
    \end{tabular}
    \caption{Error in the numerical reconstruction of the source term for different values of $h$ and for two different source terms in a case of discontinuous potential $a(x)=30\chi_{[0,0.5]}(x)+10\chi_{(0.5,1]}(x)$.}\label{table3}
\end{table}

\subsection{Smaller time observation} In Figure \ref{fig:2} we show how the reconstruction deteriorates when considering smaller observation times. For this experiment we have considered the source $g$ and the smooth potential $a$ from section \ref{ssubsec:1}. Note that for $T=2$, which is the minimal time for which both the continuous observability inequality \eqref{eq_observab} and the stability result \eqref{eq:stabcont} hold, the reconstruction is not accurate in the whole domain. This indicates that the minimal time $T_0$ required for the convergence of the numerical method in Theorem \ref{te:coninv} may be strictly larger than 2 (see Remark \ref{rem:top}).  
\begin{figure}[h]
    \centering
    \includegraphics[width=7cm]{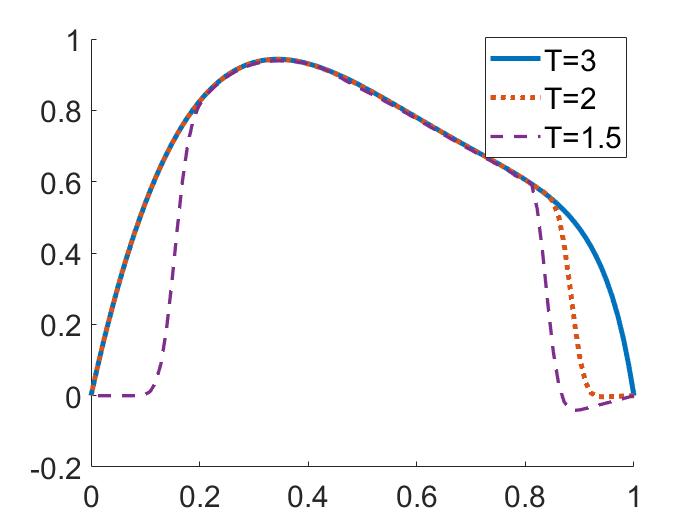} 
    \caption{Reconstruction of the source $g(x)$ for different time observations $T$.}
    \label{fig:2}
\end{figure}
\

\noindent \textbf{Acknowledgment:} The first author is supported by the Spanish Grant PID2021-124195NB-C31.

\

\section{Appendix}

In this appendix we show a series of convergence results for the mixed finite elements discretization method  introduced at the beginning of this paper. Given a non-negative potential $a\in C[0,1]$, we consider the one dimensional wave equation with Dirichlet boundary conditions,
\begin{equation}\label{eq.wave_ap12}
\left\{ \begin{array}{ll} w''(t,x)-w_{xx}(t,x)+a(x)w(t,x)= g(t,x)&
t\in(0,T),\,\, x\in(0,1)\\ w(t,0)=w(t,1)=0,\,\, & t\in(0,T)\\
w(0,x)=w^0(x),\,\, w'(0,x)=w^1(x)& x\in(0,1),\end{array}\right.
\end{equation}
where $(w^0,w^1)\in H^1_0(0,1)\times L^2(0,1)$ are initial data and $g\in L^2(0,T;L^2(0,1))$  is a nonhomogeneous term.  The corresponding variational formulation of  problem \eqref{eq.wave_ap1} reads as follows:
\begin{equation}\label{eq:varcont0}
\left\{\begin{array}{c}
w\in C\left( [0,T]; H_0^1(0,1)\right)\cap C^1 \left( [0,T]; L^2(0,1)\right),\\ \\
-\displaystyle \int_0^T  \left(w'(t), \varphi \right)_{L^2}p' (t)\,{\rm d}t + \int_0^T  \left( w(t),\varphi \right)_{H_0^1}p(t)\,{\rm d}t+\int_0^T \left( a w (t), \varphi \right)_{L^2}p (t)\,{\rm d}t\\ \\ =\displaystyle \int_0^T \left( g(t),\varphi \right)_{L^2}p (t)\,{\rm d}t\qquad \left(\varphi\in H^1_0(0,1) ,\quad p\in {\mathcal D}(0,T)\right),\\ \\
w(0)=w^0,\quad  w'(0)=w^1.
\end{array}
\right.
\end{equation}

It is known that, if $g\in L^2(0,T;L^2(0,1))$ and $(w^0,w^1)\in H^1_0(0,1)\times L^2(0,1)$, then \eqref{eq:varcont0} has a unique solution (see, for instance,  \cite[Chapter XVIII, Theorems 3, 4]{DL}). 

Now, we pass to study the space discretization of the above continuous problem. Firstly, we define the following orthogonal projection operators:
\begin{equation}\label{eq:proj1}
\begin{array}{l}
\mathbb{P}_1: H^1_0(0,1) \rightarrow X_{h,1}:=\mbox{Span}\{\varphi_1,\dots , \varphi_N\}\subset H^1_0(0,1), \\  \\ 
\mathbb{P}_2:L^2(0,1)\rightarrow X_{h,0}:=\mbox{Span}\{\psi_1,\dots , \psi_N\}\subset L^2(0,1),
\end{array}
\end{equation}
where $\left\{\varphi_j\right\}_{1\leq j\leq N}$ and $\left\{\psi_j\right\}_{1\leq j\leq N}$ are the two ``discrete basis functions'' from $H_0^1(0,1)$ and $L^2(0,1)$, respectively, introduced at the beginning of Section \ref{sec:2} to define the mixed finite elements method. We have that the following basic result holds.
\begin{proposition} \label{prop.csim} Given $w\in H_0^1(0,1)$ and $v\in L^2(0,1)$ we have that
\begin{equation}\label{eq:proj2}
\begin{array}{l}
\mathbb{P}_1 w= \displaystyle \sum_{j=1}^N w_j\varphi_j, \mbox{ with }w_j=w(x_j),
\\
\mathbb{P}_2 v= \displaystyle \sum_{j=1}^N v_j\psi_j, \mbox{ with }\frac{h}{4}\left(v_{j-1}+2v_j+v_{j+1}\right)=\frac{1}{2}\int_{x_{j-1}}^{x_{j+1}}v(x)\,{\rm d}x,
\end{array}
\end{equation}
where, by convention, $v_0=v_{N+1}=0$. Moreover, the following convergence results hold
\begin{equation}\label{eq:copr}
\begin{array}{c}\lim_{h\rightarrow 0}\mathbb{P}_1 w=w \mbox{ in }H_0^1(0,1),\\ \\
\lim_{h\rightarrow 0}\mathbb{P}_2 v=\lim_{h\rightarrow 0}\mathbb{P}_3 v=v \mbox{ in }L^2(0,1),\end{array}
\end{equation}where the operator $\mathbb{P}_3:L^2(0,1)\rightarrow X_{h,0}$ is defined by
\begin{equation}\label{eq:proj3}
\mathbb{P}_3 v= \displaystyle \sum_{j=1}^N \widetilde{v}_j\psi_j, \mbox{ with }\widetilde{v}_j=\frac{1}{2h}\int_{x_{j-1}}^{x_{j+1}}v(x)\,{\rm d}x,
\end{equation}
and it has the property that  $\|\mathbb{P}_3\|_{{\mathcal L}(L^2(0,1))}\leq 1$.
\end{proposition}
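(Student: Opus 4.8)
The plan is to split the statement into three tasks: (i) the explicit identification of the coefficients of $\mathbb{P}_1 w$ and $\mathbb{P}_2 v$; (ii) the operator bound $\|\mathbb{P}_3\|_{\mathcal L(L^2(0,1))}\le 1$; and (iii) the three convergence statements in \eqref{eq:copr}. Throughout I would use that $\mathbb{P}_1$ and $\mathbb{P}_2$ are, by definition, \emph{orthogonal} projections in $H_0^1(0,1)$ and $L^2(0,1)$, so each is characterized by its Galerkin relations against the respective basis functions.

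For (i), the relation $(\mathbb{P}_1 w,\varphi_k)_{H_0^1}=(w,\varphi_k)_{H_0^1}$ reads $\int_0^1(\mathbb{P}_1 w)_x(\varphi_k)_x\,{\rm d}x=\int_0^1 w_x(\varphi_k)_x\,{\rm d}x$. Since in one dimension $w\in H_0^1(0,1)$ is absolutely continuous and $(\varphi_k)_x$ is piecewise constant ($\pm 1/h$ on the two cells adjacent to $x_k$), integrating the right-hand side by parts on each cell gives $\frac1h(2w(x_k)-w(x_{k-1})-w(x_{k+1}))$, i.e. exactly $(K_h W)_k$ with $W=[w(x_j)]_j$; the left-hand side equals $(K_h[w_j])_k$ by the stiffness entries recorded in Section \ref{sec:2}. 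As $K_h$ is invertible, this forces $w_j=w(x_j)$. The same scheme applied to $\mathbb{P}_2$, now using the mass entries $\int_0^1\psi_i\psi_j\,{\rm d}x$ (the matrix $M_h$), turns $(\mathbb{P}_2 v,\psi_k)_{L^2}=(v,\psi_k)_{L^2}=\frac12\int_{x_{k-1}}^{x_{k+1}}v\,{\rm d}x$ into $(M_h[v_j])_k=\frac12\int_{x_{k-1}}^{x_{k+1}}v\,{\rm d}x$, which is \eqref{eq:proj2}.

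For (ii), the key observation is that $\mathbb{P}_3$ factors through cell averages. Writing $c_j=\frac1h\int_{x_{j-1}}^{x_j}v\,{\rm d}x$ for the mean of $v$ on the $j$-th cell, one has $\widetilde v_j=\tfrac12(c_j+c_{j+1})$ and hence, on the cell $(x_{j-1},x_j)$, $\mathbb{P}_3 v=\tfrac14 c_{j-1}+\tfrac12 c_j+\tfrac14 c_{j+1}$, a convex combination of neighbouring cell means. Thus $\mathbb{P}_3=A_hQ_h$, where $Q_h$ is the $L^2$-orthogonal projection onto cell-wise constants (so $\|Q_h\|\le1$) and $A_h$ is the averaging with the convex stencil $(\tfrac14,\tfrac12,\tfrac14)$. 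Jensen's inequality applied cellwise gives $\|A_h c\|_{L^2}\le\|c\|_{L^2}$, and composing yields $\|\mathbb{P}_3\|_{\mathcal L(L^2(0,1))}\le1$.

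For (iii), I would exploit the uniform bounds together with density. By (ii) the operators $\mathbb{P}_3$ are uniformly bounded, and for $v$ continuous $\mathbb{P}_3 v\to v$ uniformly (each $c_j\to v$ locally), hence in $L^2$; a standard $3\varepsilon$ argument then extends $\mathbb{P}_3 v\to v$ to all $v\in L^2(0,1)$. Since $\mathbb{P}_3 v\in X_{h,0}$ while $\mathbb{P}_2 v$ is the $L^2$-best approximation of $v$ in the \emph{same} space $X_{h,0}$, we get $\|v-\mathbb{P}_2 v\|_{L^2}\le\|v-\mathbb{P}_3 v\|_{L^2}\to0$, which delivers the convergence of $\mathbb{P}_2$ for free (and, incidentally, the density of $\bigcup_h X_{h,0}$ in $L^2$). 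The convergence $\mathbb{P}_1 w\to w$ in $H_0^1(0,1)$ is obtained analogously: by (i), $\mathbb{P}_1 w$ is the piecewise-linear nodal interpolant, for which the classical estimate $\|w-\mathbb{P}_1 w\|_{H^1}\le Ch\|w\|_{H^2}$ gives convergence on the dense set $H^2(0,1)\cap H_0^1(0,1)$, and the bound $\|\mathbb{P}_1\|_{\mathcal L(H_0^1(0,1))}\le1$ (orthogonal projection) extends it to all of $H_0^1(0,1)$. I expect the only genuinely delicate point to be exactly this qualitative convergence for data with no extra regularity, where no rate is available and the argument must proceed via a uniform operator bound plus approximation by a smoother dense class; the comparison $\|v-\mathbb{P}_2 v\|\le\|v-\mathbb{P}_3 v\|$ is what makes the direct analysis of the non-interpolatory projection $\mathbb{P}_2$ unnecessary, which is presumably the reason for introducing the auxiliary operator $\mathbb{P}_3$.
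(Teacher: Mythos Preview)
Your proof is correct and complete. The paper does not actually supply a proof of this proposition---it simply says ``the results are relatively well-known and we skip the proof''---so your argument in fact fills in what the paper omits; in particular, the factorization $\mathbb{P}_3=A_hQ_h$ through cell means together with the best-approximation inequality $\|v-\mathbb{P}_2 v\|_{L^2}\le\|v-\mathbb{P}_3 v\|_{L^2}$ is exactly the clean route one would expect, and it explains both the operator bound and why the auxiliary $\mathbb{P}_3$ is introduced at all. (One cosmetic point: the stencil $(\tfrac14,\tfrac12,\tfrac14)$ is only literally correct on interior cells; on the two boundary cells the weights sum to $\tfrac12$, which only strengthens the Jensen estimate, so nothing changes.)
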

\begin{proof} The results are relatively well-known and we skip the proof.\end{proof}

On the other hand, we define three vectorial operators:
$$\widetilde{\mathbb{P}}_1:H_0^1(0,1)\to \mathbb{C}^N, \quad \widetilde{\mathbb{P}}_1w=W_h,$$ $$\widetilde{\mathbb{P}}_2:L^2(0,1)\to \mathbb{C}^N, \quad \widetilde{\mathbb{P}}_2v=h M_h^{-1}V_h,$$ 
$$\widetilde{\mathbb{P}}_3:L^2(0,1)\to \mathbb{C}^N, \quad \widetilde{\mathbb{P}}_3v=V_h,$$where, for each $j\in\{1,\dots,N\}$, the $j$-th component of the vectors $W_h=\left[w_j\right]_{1\leq j\leq N}$ and $V_h=\left[v_j\right]_{1\leq j\leq N}$ are given by $w_j=w(x_j)$ and  $\displaystyle v_{j}=\frac{1}{2h}\int_{x_{j-1}}^{x_{j+1}}v(x)\,{\rm d}x$, respectively.

\begin{remark}\label{rem:matconv0} Since the matrices $K_h$ and $M_h$ defined at the beginning of Section \ref{sec:2} are given by $\left[(\varphi_j,\varphi_k)_{H_0^1}\right]_{1\leq j,k\leq N}$ and $\left[(\psi_j,\psi_k)_{L^2}\right]_{1\leq j,k\leq N}$, respectively,  it follows that
\begin{equation}\label{eq:retip}
\left\langle K_h \widetilde{\mathbb{P}}_1w,\widetilde{\mathbb{P}}_1w\right\rangle=\|{\mathbb{P}}_1w \|_{H_0^1}^2,\qquad  \left\langle M_h \widetilde{\mathbb{P}}_2v,\widetilde{\mathbb{P}}_2v\right\rangle =\|{\mathbb{P}}_2v \|^2_{L^2},\end{equation}
for any $w\in H_0^1(0,1)$ and $v\in L^2(0,1)$. Moreover, according to  Proposition \ref{prop.csim}, it results that 
\begin{equation}\label{eq:cotip}\lim_{h\rightarrow 0} \left\langle \widetilde{\mathbb{P}}_3v,\widetilde{\mathbb{P}}_3v\right\rangle = \|v\|_{L^2}^2.\end{equation}
\end{remark}

Now, given $\begin{pmatrix}w^0\\w^1\end{pmatrix}\in H^1_0(0,1)\times L^2(0,1)$ and $g\in L^2(0,T;L^2(0,1))$, we consider their approximations:
\begin{equation}\label{eq:disdatain}
w_h^0=\mathbb{P}_1 w^0, \quad  w_h^1=\mathbb{P}_2 w^1, \quad g_h(t)=\mathbb{P}_2 g(t),\,\, t\in[0,T]. 
\end{equation}
Moreover, we define the vectors 
\begin{equation}\label{eq:disdatain2}
W^0_h= \widetilde{\mathbb{P}}_1w^0, \quad W^1_h =\widetilde{\mathbb{P}}_2 w^1,\quad G_h(t)=\widetilde{\mathbb{P}}_2 g(t) ,\,\, t\in[0,T].\end{equation} 
Notice that the components of the vector $W^0_h\in\mathbb{C}^N$ are the coefficients of the approximation $w_h^0$ in the basis $\{ \varphi_j \}_{j=1}^N$. Moreover, the vectors $W^1_h$ and $G_h(t)$ from $\mathbb{C}^N$ contain the coefficients of $w_h^1$ and $g_h(t)$ with respect to the basis $\{ \psi_j \}_{j=1}^N$, respectively. As explained in Section \ref{sec:2}, by using the mixed finite elements method, we are led to the following space semi-discrete version of   \eqref{eq.wave_ap1}:
\begin{equation}\label{eq.generalf}
\left\{\begin{array}{ll}
M_h W_h''(t)+K_h W_h(t)+L_h W_h(t)=M_h G_h(t) \qquad (t\in (0,T))\\ 
W_h(0)=W_h^0,\,\, W'_h(0)=W_h^1.
\end{array}
\right.
\end{equation}

Concerning the solutions of \eqref{eq.generalf}, we have the following result:
\begin{proposition} System \eqref{eq.generalf} has a unique solution $W_h=\left[w_j\right]_{1\leq j\leq N}\in  W^{1,\infty}\left(0,T; \mathbb{C}^N \right)\cap W^{2,2}\left(0,T; \mathbb{C}^N \right)$ with the following property
\begin{align} 
&\left\langle M_h W'_h(t),W_h'(t)\right\rangle+ \left\langle(K_h+L_h) W_h(t),W_h(t)\right\rangle\nonumber \\ &\leq C(T)\left[ \left\langle M_h W^1_h,W_h^1\right\rangle+ \left\langle(K_h+L_h) W_h^0,W_h^0\right\rangle + \int_0^T \left\langle M_h G_h(s),G_h(s)\right\rangle \,{\rm d}s\right], \label{eq:energyd}
\end{align}
where $C(T)$ is a positive constant depending only on $T$. Moreover, if $G_h=0$, then the following energy is conserved for $t\in[0,T]$:
\begin{equation}\label{eq:encon}
E_h(t):=\frac{1}{2}\left(\left\langle M_h W'_h(t),W_h'(t)\right\rangle+ \left\langle(K_h+L_h) W_h(t),W_h(t)\right\rangle\right)=\left\| \left(\begin{array}{c}W_h(t)\\W_h'(t)\end{array}\right)\right\|_{1,M}^2.
\end{equation}
\end{proposition}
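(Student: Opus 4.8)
The plan is to regard \eqref{eq.generalf} as a linear second order system of ordinary differential equations in $\mathbb{C}^N$ and to derive \eqref{eq:energyd} by the classical energy (multiplier) method. Since $M_h$ is symmetric and positive definite (its Cholesky factorization was used in Lemma \ref{lemma.spec1}), it is invertible, and \eqref{eq.generalf} is equivalent to $W_h''+M_h^{-1}(K_h+L_h)W_h=G_h$. Setting $Z_h=(W_h,W_h')^{T}$ and using the matrix ${\mathcal A}_h$ from \eqref{eq.opmat}, this reads $Z_h'+{\mathcal A}_h Z_h=(0,G_h)^{T}$, and the variation of constants (Duhamel) formula
\[
Z_h(t)=e^{-t{\mathcal A}_h}Z_h(0)+\int_0^t e^{-(t-s){\mathcal A}_h}\begin{pmatrix}0\\G_h(s)\end{pmatrix}\,{\rm d}s
\]
defines its unique solution. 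Because $g\in L^2(0,T;L^2(0,1))$, the discrete forcing $G_h=\widetilde{\mathbb{P}}_2 g$ lies in $L^2(0,T;\mathbb{C}^N)\subset L^1(0,T;\mathbb{C}^N)$, so the integral makes sense and yields $Z_h\in C([0,T];\mathbb{C}^{2N})$; in particular $W_h,W_h'\in C([0,T];\mathbb{C}^N)\subset W^{1,\infty}$. Feeding this back, $W_h''=G_h-M_h^{-1}(K_h+L_h)W_h$ is the sum of an $L^2$ and a continuous function, so $W_h''\in L^2(0,T;\mathbb{C}^N)$ and $W_h\in W^{2,2}(0,T;\mathbb{C}^N)$, as claimed.

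Next I would establish the energy identity. Taking the inner product of the equation with $W_h'(t)$ and using the symmetry of $M_h$ and of $K_h+L_h$, both leading terms become exact time derivatives, giving $E_h'(t)=\langle M_h G_h(t),W_h'(t)\rangle$ for a.e.\ $t$, with $E_h$ as in \eqref{eq:encon}. This differentiation is legitimate since $W_h'$ is absolutely continuous ($W_h''\in L^2\subset L^1$), so each quadratic term is absolutely continuous in $t$ with the expected a.e.\ derivative. When $G_h=0$ the right hand side vanishes, $E_h$ is constant, and the conservation property follows; moreover, by the definition \eqref{eq.inprod3} of $\langle\cdot,\cdot\rangle_{1,M}$, the two-term form $\langle M_h W_h',W_h'\rangle+\langle(K_h+L_h)W_h,W_h\rangle$ equals $\|(W_h,W_h')\|_{1,M}^2$, which is the identity recorded in \eqref{eq:encon}.

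To obtain \eqref{eq:energyd}, I would bound the forcing term by the Cauchy--Schwarz inequality in the $\langle\cdot,\cdot\rangle_M$ inner product followed by Young's inequality, together with the positivity of $K_h+L_h$:
\[
\langle M_h G_h,W_h'\rangle\leq \tfrac12\langle M_h W_h',W_h'\rangle+\tfrac12\langle M_h G_h,G_h\rangle\leq E_h(t)+\tfrac12\langle M_h G_h,G_h\rangle .
\]
Hence $E_h'(t)\leq E_h(t)+\tfrac12\langle M_h G_h(t),G_h(t)\rangle$, and Gronwall's inequality gives $E_h(t)\leq e^{T}\big(E_h(0)+\tfrac12\int_0^T\langle M_h G_h,G_h\rangle\,{\rm d}s\big)$ for every $t\in[0,T]$. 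Multiplying by $2$ and recalling that $2E_h(t)$ and $2E_h(0)$ are exactly the two-term quadratic expressions appearing in \eqref{eq:energyd}, this yields \eqref{eq:energyd} with $C(T)=e^{T}$.

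Since the problem is finite dimensional and linear, there is no genuine analytic obstacle here. The only points demanding a little care are verifying that an $L^2$-in-time forcing produces exactly the regularity $W^{1,\infty}\cap W^{2,2}$, and justifying the almost-everywhere energy identity for a solution that is only $W^{2,2}$ in time; both reduce to elementary properties of absolutely continuous functions of one real variable.
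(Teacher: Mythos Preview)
Your proof is correct and follows exactly the standard energy-multiplier route the paper has in mind; indeed the paper itself writes ``It is elementary and we omit it.'' The only (harmless) inaccuracy is the factor $1/2$ in \eqref{eq:encon}: by the definition \eqref{eq.inprod3} one has $\|(W_h,W_h')\|_{1,M}^2=\langle M_hW_h',W_h'\rangle+\langle(K_h+L_h)W_h,W_h\rangle=2E_h(t)$, so the equality in \eqref{eq:encon} is off by a factor of two in the paper's statement, not in your argument.
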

\begin{proof} It is elementary and we omit it.\end{proof}
\begin{remark}\label{rem:poindis}By using the Fourier expansion of the solution $W_h(t)$ in the orthonormal basis consisting in the eigenvectors of the matrix $K_h$ we obtain that the following inequality holds:
\begin{equation}\label{eq:boh02}
\frac{4}{h}\sin^2\left(\frac{\pi h}{2}\right)\left\langle W_h(t),W_h(t)\right\rangle \leq \left\langle K_hW_h(t),W_h(t)\right\rangle \qquad (t\in[0,T]).
\end{equation}
Relation \eqref{eq:boh02} represents a discrete version on Poincar\'e inequality. Since $h\in (0,1)$, it follows that $\frac{4}{h^2}\sin^2\left(\frac{\pi h}{2}\right)\geq 4$ and consequently
\begin{equation}\label{eq:boh03}
4 h \left\langle W_h(t),W_h(t)\right\rangle \leq \left\langle K_hW_h(t),W_h(t)\right\rangle \qquad (t\in[0,T]).
\end{equation}
\end{remark}
If $W_h=\left[w_j\right]_{1\leq j\leq N}$ is the vectorial solution of \eqref{eq.generalf}, we introduce the functions
\begin{equation}\label{eq:df}
\left\{\begin{array}{l}w_h(t)= \displaystyle \sum_{j=1}^N w_j(t) \varphi_j \in H_0^1(0,1),\\ 
v_h(t)=  \displaystyle \sum_{j=1}^N w'_j(t) \psi_j \in L^2(0,1).
\end{array}\right.\end{equation}
With this notation, we have that the following relations hold: 
\begin{align}
K_{h,j}W_h(t)&=\frac{1}{h}\left(w_{j-1}(t)-2w_j(t)+w_{j+1}(t)\right)=\left( w_h(t), \varphi_j\right)_{H_0^1},\label{eq:uh}\\ 
M_{h,j}W'_h(t) &=\frac{1}{4}\left(w_{j-1}'(t)+2w'_j(t)+w'_{j+1}(t)\right)=\left( w'_h(t), \psi_j\right)_{L^2}=\left( v_h(t), \psi_j\right)_{L^2}.\label{eq:uhprim}
\end{align}
In \eqref{eq:uh}-\eqref{eq:uhprim} and in the sequel, $K_{h,j}$ and $M_{h,j}$ denote the $j$-th row of the matrices $K_h,\,M_h\in{\mathcal M}_N(\mathbb{C})$, respectively.  
It follows that  \eqref{eq.generalf} is equivalent to each of the following variational equations
\begin{equation}\label{eq:vardis00}
\left\{
\begin{array}{ll}
\left \langle M_hW''(t),\Theta_h\right\rangle+\left \langle K_hW(t),\Theta_h\right\rangle+\left\langle L_hW(t),\Theta_h\right\rangle =\left \langle M_h G_h(t),\Theta_h\right\rangle,\\ W_h(0)=W^0_h,\quad W'_h(0)=W^1_h,
\end{array}
\right.
\end{equation}
\begin{equation}\label{eq:vardis11}
\left\{
\begin{array}{ll}
\left\langle v_h'(t),\theta_h\right\rangle_{H^{-1},H_0^1}+\left(w_h(t),\theta_h\right)_{H_0^1}+\left(a w_h(t),\theta_h\right)_{L^2} =\left( g_h(t),\theta_h\right)_{L^2},\\  w_h(0)=w^0_h,\quad v_h(0)=w^1_h,
\end{array}
\right.
\end{equation}
for $t\in(0,T)$ and for each $\theta\in H_0^1(0,1)$, where $\Theta_h =\widetilde{\mathbb{P}}_1\theta \in \mathbb{C}^N$ and $\theta_h=\mathbb{P}_1\theta =\displaystyle \sum_{j=1}^N \Theta_{h,j} \varphi_j.$

The following result will be very useful in the proof of our convergence properties.
\begin{lemma}\label{lemma:matrice} Let $Q_h=[q_{ij}(h)]_{1\leq i,j\leq N}\in{\mathcal M}_N(\mathbb{C})$ be a tridiagonal matrix  ($q_{ij}(h)=0$ if  $|i-j|>1$) with the following additional properties:
\begin{enumerate}
\item $\max_{1\leq j\leq N}\left\{|q_{j j-1}(h)|,\, |q_{j j+1}(h)|\right\}\to 0 \mbox{ as }h\to 0,$
\item $\frac{1}{h}\max_{1\leq j\leq N}\left\{|q_{j j-1}(h)+q_{jj}(h)+q_{j j+1}(h)|\right\}\to 0 \mbox{ as }h\to 0$.
\end{enumerate}
If $Y_h=[y_j(h)]_{1\leq j\leq N}$ and $\Theta_h=[\theta_j(h)]_{1\leq j\leq N}$ are vectors from $\mathbb{C}^N$ with the property that there exists a positive constant $C$, independent of $h$, such that 
\begin{equation}\label{eq:no1max}\max\left\{ \left\langle K_h Y_h,Y_h\right\rangle,\, \left\langle K_h \Theta_h,\Theta_h\right\rangle\right\}\leq C,\end{equation}
then we have that 
\begin{equation}\label{eq:limax}
 \left\langle Q_h Y_h,\Theta_h\right\rangle \to 0 \mbox{ as }h\to 0.
\end{equation}
\end{lemma}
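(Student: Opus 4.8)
The plan is to expand the bilinear form $\langle Q_h Y_h,\Theta_h\rangle$ row by row and to isolate two distinct mechanisms by which the off-diagonal and the diagonal parts of $Q_h$ are made small. Writing $s_i := q_{i,i-1}(h)+q_{ii}(h)+q_{i,i+1}(h)$ for the $i$-th row sum and using that $Q_h$ is tridiagonal, I would rewrite
\begin{align*}
\langle Q_h Y_h,\Theta_h\rangle = \sum_{i=1}^N \Big[ q_{i,i-1}(y_{i-1}-y_i) + q_{i,i+1}(y_{i+1}-y_i) + s_i\,y_i\Big]\overline{\theta_i},
\end{align*}
so that the off-diagonal contributions involve the discrete gradient $y_{i\pm1}-y_i$, which is controlled by $\langle K_hY_h,Y_h\rangle$, while the remaining term carries the full row sum $s_i$, which is controlled by the second hypothesis.

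The two quantitative ingredients I would use are, first, the elementary summation-by-parts identity $\langle K_hW,W\rangle=\frac1h\sum_{j=0}^N|w_{j+1}-w_j|^2$ (with the convention $w_0=w_{N+1}=0$), which together with \eqref{eq:no1max} gives $\sum_{j=0}^N|y_{j+1}-y_j|^2\le Ch$ and the analogous bound for $\Theta_h$; and second, the discrete Poincaré inequality \eqref{eq:boh03}, which yields $\|Y_h\|^2\le \frac{1}{4h}\langle K_hY_h,Y_h\rangle\le \frac{C}{4h}$, and likewise $\|\Theta_h\|^2\le\frac{C}{4h}$. For the off-diagonal terms I would apply Cauchy--Schwarz together with hypothesis~1: denoting $\varepsilon_1(h):=\max_j\{|q_{j,j-1}(h)|,|q_{j,j+1}(h)|\}$,
\begin{align*}
\Big| \sum_{i=1}^N q_{i,i-1}(y_{i-1}-y_i)\overline{\theta_i}\Big| \le \varepsilon_1(h)\Big(\sum_{i}|y_{i-1}-y_i|^2\Big)^{1/2}\|\Theta_h\| \le \varepsilon_1(h)\,(Ch)^{1/2}\Big(\tfrac{C}{4h}\Big)^{1/2},
\end{align*}
where the factors $h^{1/2}$ and $h^{-1/2}$ cancel exactly, leaving a bound of order $\varepsilon_1(h)\to 0$; the $q_{i,i+1}$ term is handled identically. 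For the row-sum term I would set $\varepsilon_2(h):=\frac1h\max_i|s_i|$ and estimate $|\sum_i s_iy_i\overline{\theta_i}|\le h\,\varepsilon_2(h)\,\|Y_h\|\,\|\Theta_h\|\le h\,\varepsilon_2(h)\cdot\frac{C}{4h}=\frac{C}{4}\varepsilon_2(h)\to0$, the factor $h$ supplied by hypothesis~2 again cancelling the $h^{-1}$ growth of the Euclidean norms.

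The subtle point, and essentially the only obstacle, is that the individual vectors $Y_h$ and $\Theta_h$ are \emph{not} bounded in the Euclidean norm: \eqref{eq:no1max} controls them only in the $K_h$-seminorm, and in fact $\|Y_h\|$ and $\|\Theta_h\|$ may grow like $h^{-1/2}$. A crude estimate would therefore diverge, and the conclusion hinges on tracking the exact powers of $h$: hypothesis~1 must be paired with the discrete-gradient bound $\sum|y_{i\pm1}-y_i|^2\le Ch$, while hypothesis~2 supplies precisely the extra factor $h$ needed to absorb the $h^{-1}$ blow-up of $\|Y_h\|\,\|\Theta_h\|$ in the diagonal contribution. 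Once the row splitting above is in place, collecting the three estimates proves \eqref{eq:limax}.
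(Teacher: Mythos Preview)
Your proof is correct and follows essentially the same approach as the paper: both decompose $\langle Q_hY_h,\Theta_h\rangle$ into off-diagonal terms involving the differences $y_{i\pm1}-y_i$ plus a row-sum term $s_iy_i$, and then pair hypothesis~1 with the $K_h$-bound on the discrete gradient and hypothesis~2 with the discrete Poincar\'e inequality. The only cosmetic difference is that the paper bounds each piece via Young's inequality $|ab|\le\tfrac12(a^2+b^2)$ (arriving at $\varepsilon_1(h)(\langle K_hY_h,Y_h\rangle+h\|\Theta_h\|^2)$ and $\varepsilon_2(h)(h\|Y_h\|^2+h\|\Theta_h\|^2)$) while you use Cauchy--Schwarz directly, but the $h$-bookkeeping and the conclusion are identical.
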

\begin{proof} Arguing as in Remark \ref{rem:poindis}, from \eqref{eq:no1max} we deduce that 
\begin{equation}\label{eq:no0max}
h \max\left\{ \left\langle Y_h,Y_h\right\rangle,\, \left\langle \Theta_h,\Theta_h\right\rangle\right\} \leq C.
\end{equation}
A straightforward computation shows that 
\begin{align*}
\left| \left\langle Q_h Y_h,\Theta_h\right\rangle\right|&=\left| \sum_{j=1}^N\left(q_{jj-1}(h)(y_{j-1}(h) -y_j(h))+q_{jj+1}(h)(y_{j+1}(h) -y_j(h))\right)\overline{\theta}_j(h)\right.\\&\left.+ \sum_{j=1}^N\left(q_{jj-1}(h)+q_{jj}(h)+q_{jj+1}(h)\right)y_j(h)\overline{\theta}_j(h)\right|\\ &\leq \max_{1\leq j\leq N}\left\{|q_{j j-1}(h)|,\, |q_{j j+1}(h)|\right\} \left(\left\langle K_h Y_h,Y_h\right\rangle+h \left\langle\Theta_h,\Theta_h\right\rangle\right) \\&+ \frac{1}{h}\max_{1\leq j\leq N}\left\{|q_{j j-1}(h)+q_{jj}(h)+q_{j j+1}(h)|\right\} \left(h\left\langle Y_h,Y_h\right\rangle+h \left\langle\Theta_h,\Theta_h\right\rangle\right).
\end{align*}
By taking into account properties 1. and 2. of the coefficients $q_{ij}(h)$  and the uniform boundedness relations \eqref{eq:no1max} and \eqref{eq:no0max}, from the above inequality it follows that \eqref{eq:limax} holds.
\end{proof}
\begin{remark}\label{rem:tresmat} Let $a\in C [0,1]$ and consider the following three matrices from ${\mathcal M}_N(\mathbb{R})$, $$S_h=\left[(a\varphi_j,\varphi_k)_{L^2}\right]_{1\leq j,k\leq N},\quad R_h=\left[(a\psi_j,\psi_k)_{L^2}\right]_{1\leq j,k\leq N},\quad L_h={\rm diag}\left(\left[ha(x_j)\right]_{1\leq j\leq N}\right).$$ Then the matrices $S_h-L_h$ and $R_h-L_h$ are tridiagonal and, under the hypothesis $a\in C [0,1]$, verify properties 1. and 2. from Lemma \ref{lemma:matrice}. Consequently, if $W_h(t)=\left[w_j(t)\right]_{1\leq j\leq N} $ is the solution of system \eqref{eq.generalf} (verifying the boundedness property \eqref{eq:energyd}) and  $\Theta_h=\widetilde{\mathbb{P}}_1\theta$ for some $\theta \in H_0^1(0,1)$, then the following properties hold uniformly on $t\in[0,T]$:
\begin{align}\label{eq:c3great}
 \left \langle S_hW_h (t),\Theta_h\right\rangle- \left \langle L_h W_h (t),\Theta_h\right \rangle \rightarrow 0 \mbox{ as }h\rightarrow 0,\\ \nonumber \\
\label{eq:c4great}
 \left\langle R_hW_h (t),\Theta_h\right\rangle- \left\langle L_h W_h (t),\Theta_h\right\rangle \rightarrow 0 \mbox{ as }h\rightarrow 0.
\end{align}
\end{remark}

\begin{remark} If in the above lemma we take $a\equiv 1$ and we take into account that in this particular case $R_h=M_h$,  we deduce that the following property holds uniformly on $t\in[0,T]$
\begin{equation}\label{eq:c1great}
 \left\langle M_hW_h (t),\Theta_h\right\rangle- \left\langle S_h W_h (t),\Theta_h\right\rangle \rightarrow 0 \mbox{ as }h\rightarrow 0.
\end{equation}
\end{remark}

Now we are ready to show the principal weak convergence result. 

\begin{proposition}[\bf Weak convergence of solutions]\label {prop:convweak} The family $(u_h)_{h>0}$ defined in \eqref{eq:df} is weakly* convergent in $W^{1,\infty}(0,T;L^2(0,1))\cap L^{\infty}(0,T;H^{1}_0(0,1))$ and weakly convergent in  $W^{2,2}(0,T;H^{-1}(0,1))$ to the unique solution  $u$ of \eqref{eq:varcont0}.
\end{proposition}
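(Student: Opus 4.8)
The plan is to run the classical Galerkin compactness scheme: extract uniform bounds from the energy estimate, pass to a weak-$*$ limit along a subsequence, identify that limit as a solution of \eqref{eq:varcont0}, and finally use uniqueness to upgrade subsequential convergence to convergence of the whole family. First I would apply the energy estimate \eqref{eq:energyd} to the solution $W_h$ of \eqref{eq.generalf} with the discrete data \eqref{eq:disdatain2}. Recalling that $\langle K_h W_h,W_h\rangle=\|w_h\|_{H^1_0(0,1)}^2$ and $\langle M_hW_h',W_h'\rangle=\|v_h\|_{L^2(0,1)}^2$, and that by \eqref{eq:copr} the quantities $\|\mathbb{P}_1w^0\|_{H^1_0}$, $\|\mathbb{P}_2w^1\|_{L^2}$ and $\int_0^T\|\mathbb{P}_2g(s)\|_{L^2}^2\,{\rm d}s$ remain bounded as $h\to0$ (the potential contribution $\langle L_hW_h^0,W_h^0\rangle\ge0$ being controlled in the same way), the right-hand side of \eqref{eq:energyd} is bounded uniformly in $h$. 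Hence $(w_h)_{h>0}$ is bounded in $L^\infty(0,T;H^1_0(0,1))$ and $(v_h)_{h>0}$ is bounded in $L^\infty(0,T;L^2(0,1))$. Since these spaces are duals of separable Banach spaces, I can extract a subsequence with $w_h\rightharpoonup^* u$ in $L^\infty(0,T;H^1_0(0,1))$ and $v_h\rightharpoonup^* z$ in $L^\infty(0,T;L^2(0,1))$.

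The crucial and least routine point is to identify the limit velocity as the weak-$*$ limit of $v_h$ rather than of $w_h'$: the two are expanded in the different bases $\{\psi_j\}$ and $\{\varphi_j\}$, and $w_h'$ is \emph{not} bounded in $L^\infty(0,T;L^2(0,1))$ uniformly in $h$ because the non-diagonal mass matrix $M_h$ degenerates spectrally at high frequencies and does not control the standard one. To bypass this I would use the identity $M_{h,j}W_h(t)=(w_h(t),\psi_j)_{L^2}$, which follows from $(\varphi_i,\psi_j)_{L^2}=(\psi_i,\psi_j)_{L^2}$, together with \eqref{eq:uhprim}, to obtain $\frac{d}{dt}(w_h(t),\psi_j)_{L^2}=(v_h(t),\psi_j)_{L^2}$ for every $j$. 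Testing this against $p\in\mathcal{D}(0,T)$ and against $\tilde\theta_h=\sum_j\Theta_{h,j}\psi_j$ with $\Theta_h=\widetilde{\mathbb{P}}_1\theta$, $\theta\in H^1_0(0,1)$, and letting $h\to0$ (using $\tilde\theta_h\to\theta$ in $L^2(0,1)$ by \eqref{eq:copr} together with the two weak-$*$ limits) yields $-\int_0^T(u,\theta)_{L^2}p'\,{\rm d}t=\int_0^T(z,\theta)_{L^2}p\,{\rm d}t$. This is precisely $u'=z$, so in particular $u\in W^{1,\infty}(0,T;L^2(0,1))$.

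It then remains to pass to the limit in the discrete variational formulation \eqref{eq:vardis00}. I would multiply by $p\in\mathcal{D}(0,T)$, integrate over $(0,T)$, and integrate the acceleration term by parts in time, writing $\langle M_hW_h',\Theta_h\rangle=(v_h,\tilde\theta_h)_{L^2}$; by the previous step this converges to $-\int_0^T(u',\theta)_{L^2}p'\,{\rm d}t$. The stiffness term equals $(w_h,\theta_h)_{H^1_0}$ by \eqref{eq:uh}, with $\theta_h=\mathbb{P}_1\theta\to\theta$ in $H^1_0(0,1)$, so it converges to $\int_0^T(u,\theta)_{H^1_0}p\,{\rm d}t$; the potential term $\langle L_hW_h,\Theta_h\rangle$ is replaced, up to a remainder vanishing uniformly in $t$ by \eqref{eq:c3great} of Remark \ref{rem:tresmat}, by $(aw_h,\theta_h)_{L^2}=(w_h,a\theta_h)_{L^2}$, which converges to $\int_0^T(au,\theta)_{L^2}p\,{\rm d}t$ since $a\theta_h\to a\theta$ strongly in $L^2(0,1)$ while $w_h\rightharpoonup^* u$; and the source term $(g_h,\tilde\theta_h)_{L^2}$ converges to $\int_0^T(g,\theta)_{L^2}p\,{\rm d}t$. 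Thus $u$ satisfies the variational identity of \eqref{eq:varcont0}, and the initial conditions $u(0)=w^0$, $u'(0)=w^1$ are recovered by repeating the computation with a $p$ that does not vanish at $t=0$ and using $w_h(0)\to w^0$ in $H^1_0(0,1)$ and $v_h(0)\to w^1$ in $L^2(0,1)$. Since \eqref{eq:varcont0} has a unique solution, $u$ is independent of the extracted subsequence and the whole family converges.

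I expect the genuine obstacle to be the identification $z=u'$ described above: because the mixed scheme represents position and velocity in different finite element spaces and $M_h$ is non-diagonal and spectrally degenerate, the naive candidate $w_h'$ for the limit velocity is uncontrolled, and $u'$ must instead be captured through $v_h$ via the consistency identity. The second place where the specific structure of the method (and not a generic argument) is needed is the treatment of the potential term, which relies on the tridiagonal-matrix estimate of Lemma \ref{lemma:matrice} through Remark \ref{rem:tresmat}; everything else reduces to combining weak-$*$ convergence with the strong convergence of the projected test functions.
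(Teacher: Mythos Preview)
Your plan is correct and matches the paper's Galerkin compactness scheme almost step for step. Two minor differences are worth noting. First, you do not address the $W^{2,2}(0,T;H^{-1}(0,1))$ convergence asserted in the proposition; the paper handles this by reading a uniform $L^2(0,T;H^{-1})$ bound on $v_h'$ directly off the variational equation \eqref{eq:vardis11}, extracting a further weak limit $v_h'\rightharpoonup z$, and then identifying $z=w''$. Second, for the identification $u'=z$ the paper passes $\langle M_hW_h,\Theta_h\rangle\to(w,\theta)_{L^2}$ via the tridiagonal matrix lemma (Remark \ref{rem:tresmat} with $a\equiv1$, i.e.\ \eqref{eq:c1great}), whereas your route through the identity $\langle M_hW_h,\Theta_h\rangle=(w_h,\tilde\theta_h)_{L^2}$ is slightly more direct and avoids that lemma at this step---just be aware that $\tilde\theta_h=\sum_j\theta(x_j)\psi_j$ is not literally one of the projections in \eqref{eq:copr}, so its $L^2$-convergence to $\theta$ needs its own one-line argument (immediate since $\theta\in H^1_0(0,1)\subset C[0,1]$).
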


\begin{proof}  From \eqref{eq:energyd} and  \eqref{eq:vardis11} we deduce that the families $(w_h)_{h>0}$, $(v_h)_{h>0}$ and $(v_h')_{h>0}$ are uniformly bounded in $L^\infty\left(0,T;H_0^1(0,1)\right)$, $L^\infty\left(0,T;L^2(0,1)\right)$ and $L^2\left(0,T;H^{-1}(0,1)\right)$, respectively. It follows that there exist $w$, $v$ and $z$ such that (for some subsequences denoted in the same way)  we have 
\begin{equation}\label{eq:w*c}
\left\{\begin{array}{l}w_h \stackrel{\ast}{\rightharpoonup} w \mbox{ as }h\rightarrow 0 \mbox{ in }L^\infty\left(0,T;H_0^1(0,1)\right) ,\\ 
v_h(t) \stackrel{\ast}{\rightharpoonup} v \mbox{ as }h\rightarrow 0 \mbox{ in }L^\infty\left(0,T;L^2(0,1)\right),\\
v_h'(t)\rightharpoonup z \mbox{ as }h\rightarrow 0  \mbox{ in }L^2\left(0,T;H^{-1}(0,1)\right).
\end{array}\right.\end{equation}

Let $p\in{\mathcal D}(0,T)$, $\theta\in H_0^1(0,1)$, $\theta_h=\mathbb{P}_1\theta$ and $\Theta_h=\widetilde{\mathbb{P}}_1\theta$.  By taking into account  \eqref{eq:copr}, \eqref{eq:df} and \eqref{eq:w*c}, we obtain the following convergence properties as $h\to 0$:
\begin{equation}\label{eq:ch1}
\int_0^T \left\langle K_h W_h(t),\Theta_h\right\rangle p(t)\,{\rm d}t =\int_0^T \left(w_h(t),\theta_h\right)_{H_0^1} p(t)\,{\rm d}t \rightarrow \int_0^T \left(w(t),\theta \right)_{H_0^1} p(t)\,{\rm d}t.
\end{equation}
By taking into account the definition of the matrix $S_h$ from Remark \ref{rem:tresmat}, we obtain that \begin{equation}\label{eq:nuh2}
 \left\langle S_h W_h (t),\Theta_h\right\rangle=(a w_h(t),\theta_h)_{L^2}.
\end{equation}
Therefore, \eqref{eq:copr}, \eqref{eq:c3great} and \eqref{eq:w*c}  imply that 
\begin{equation}\label{eq:cal2}
\int_0^T \left\langle L_h W_h(t),\Theta_h\right\rangle p(t)\,{\rm d}t  \rightarrow \int_0^T \left(a w(t),\theta \right)_{L^2} p(t)\,{\rm d}t.
\end{equation}

On the other hand, we also have that 
\begin{align}\label{eq:cvl2}
&\int_0^T \left\langle M_hW_h'(t),\Theta_h\right\rangle p(t)\,{\rm d}t =\int_0^T \left(v_h(t),\theta_h\right)_{L^2}p(t)\,{\rm d}t \rightarrow \int_0^T \left(v(t),\theta \right)_{L^2} p(t)\,{\rm d}t.
\end{align}
By taking into account \eqref{eq:nuh2}, from  \eqref{eq:c1great} we deduce that 
\begin{equation}\label{eq:c2great}
\left\langle M_hW_h (t),\Theta_h\right\rangle \rightarrow (w(t),\theta)_{L^2}\mbox{ as }h\rightarrow 0,
\end{equation}
uniformly in $t\in[0,T]$, and therefore
\begin{align}\label{eq:cupl2}
&\int_0^T \left\langle M_hW_h'(t),\Theta_h\right\rangle p(t)\,{\rm d}t =-\int_0^T \left\langle M_hW_h (t),\Theta_h\right\rangle p'(t)\,{\rm d}t \rightarrow -\int_0^T \left(w(t),\theta \right)_{L^2} p'(t)\,{\rm d}t.
\end{align}
This shows that $w\in W^{1,\infty}(0,T;L^2(0,1))$ and $w'=v$. Moreover, the family $(w_h)_{h>0}$ is weakly* convergent to $w$ in $W^{1,\infty}(0,T;L^2(0,1)).$  Finally, we remark that
\begin{align}\label{eq:czhm1}
&\int_0^T \left\langle M_hW_h''(t),\Theta_h\right\rangle p(t)\,{\rm d}t =\int_0^T \left\langle v_h'(t),\theta_h\right\rangle_{H^{-1},H_0^1}p(t)\,{\rm d}t \rightarrow \int_0^T \left\langle z(t),\theta \right\rangle_{H^{-1},H_0^1} p(t)\,{\rm d}t,
\end{align}
which together with the following relation resulting again from \eqref{eq:c2great}, 
\begin{align}\label{eq:cupphmn}
&\int_0^T \left\langle M_hW_h''(t),\Theta_h\right\rangle p(t)\,{\rm d}t =\int_0^T \left\langle M_hW_h (t),\Theta_h\right\rangle p''(t)\,{\rm d}t \rightarrow \int_0^T \left(w(t),\theta \right)_{L^2} p''(t)\,{\rm d}t,
\end{align}
implies that $w\in W^{2,2}(0,T; H^{-1}(0,1))$, $w''=z$ and the family $(w_h)_{h>0}$ is weakly convergent to $w$ in $W^{2,2}(0,T;H^{-1}(0,1)).$ Since we also have that 
$$ \int_0^T \left\langle M_h G_h(t),\Theta_h\right\rangle p(t) \,{\rm d}t = \int_0^T \left(\mathbb{P}_2g(t),\theta_h\right)_{L^2}p(t) \,{\rm d}t \rightarrow  \int_0^T \left( g(t),\theta \right)_{L^2} p(t) \,{\rm d}t ,$$
it follows that $w\in W^{2,2}(0,T;H^{-1}(0,1))\cap W^{1,\infty}(0,T;L^{2}(0,1))\cap L^{\infty}(0,T;H^{1}_0(0,1))$ verifies the variational relation in \eqref{eq:varcont0}.

In order to show that $u$ verifies the initial conditions from \eqref{eq:varcont0}, we consider the variational formulation \eqref{eq:vardis00}, 
we multiply by a function $p\in C^2[0,T]$ with $p(0),\,p'(0)\neq 0$ and $p(T)=p'(T)=0$ and integrate by parts.  We obtain that 
\begin{multline}\label{eq:disin0}
\left\langle M_hW_h^1,\Theta_h\right\rangle p(0)-\left\langle M_hW_h^0,\Theta_h\right\rangle p'(0)\\ =\int_0^T \left[\left\langle M_hW_h(t),\Theta_h\right\rangle p''(t)+\left\langle (K_h+L_h)W_h(t),\Theta_h\right\rangle p(t)-\left\langle M_h G_h(t),\Theta_h\right\rangle p(t)\right]\,{\rm d}t.
\end{multline}
By passing to the limit as $h$ goes to zero in \eqref{eq:disin0} and integrating by parts, it follows that 
\begin{align*}
&\left( w^1,\theta\right)_{L^2}p(0) -  \left(w^0,\theta\right)_{L^2}p'(0)\\&=\int_0^T \left[\left(w(t),\theta\right)_{L^{2}}p''(t)+\left(w(t),\theta\right)_{H_0^1}p(t)+\left(a w(t),\theta\right)_{L^2}p(t)-\left( g(t),\theta\right)_{L^2}p(t)\right]\,{\rm d}t\\ &=-\left(w(0),\theta\right)_{L^2}p'(0)+\left( w'(0),\theta\right)_{L^2}p(0) \\ &+
\int_0^T \left[\left\langle w''(t),\theta\right\rangle_{H^{-1},H_0^1}+\left(w(t),\theta\right)_{H_0^1}+\left(a w(t),\theta\right)_{L^2}-\left( g(t),\theta\right)_{L^2}\right]p(t)\,{\rm d}t.
\end{align*}
From the above relation and  \eqref{eq:varcont0}, we deduce that $w'(0)=w^1$ and $w(0)=w^0$. 

According to \cite[Th\'eor\`eme 8.1, 8.2]{LM}, the limit function $w$ belongs to $C([0,T],H_0^1(0,1))\cap C^1([0,T],L^2(0,1))$, which concludes the proof of the proposition.

\end{proof}

\begin{remark}
In fact, the matrices $S_h$, $R_h$ and $L_h$ from Remark \ref{rem:tresmat} allow to give three possible discretizations of the potential term $a$ which are all convergent to the same limit. Indeed, if $W_h(t)=\left[w_j(t)\right]_{1\leq j\leq N} $ is the solution of system \eqref{eq.generalf} and  $\Theta_h=\widetilde{\mathbb{P}}_1\theta$ for some $\theta \in H_0^1(0,1)$,  from \eqref{eq:w*c}  and \eqref{eq:nuh2}  it follows that 
\begin{equation}\label{eq:c22great}
\left\langle S_hW_h (t),\Theta_h\right\rangle \rightarrow (a w(t),\theta)_{L^2},\mbox{ as }h\rightarrow 0.
\end{equation}
Now, by taking into account \eqref{eq:c22great},  \eqref{eq:c3great} and \eqref{eq:c4great} , we deduce that
$$
\left\langle L_hW_h (t),\Theta_h\right\rangle \rightarrow (a w(t),\theta)_{L^2}\mbox{ and }\left\langle R_hW_h (t),\Theta_h\right\rangle \rightarrow (a w(t),\theta)_{L^2},\mbox{ as }h\rightarrow 0.
$$
We could have chosen any of the above discretizations of the potential term, but the one involving the the matrix $L_h$ is the simplest and allowed us to show the observability inequality given  by Theorem \ref{te.obsineg}. Although in view of  \eqref{eq.waverew1} the most natural discretization of the potential term would be the one associated to the matrix $S_h$, the proof of the corresponding discrete observability inequality is more involved and needs further investigation.
\end{remark}

The following strong convergence result also holds.

\begin{proposition}[\bf Strong convergence of solutions]\label{prop.cosol} The families $(w_h)_{h>0}$ and $(v_h)_{h>0}$ defined in \eqref{eq:df}  verify the following relations as $h$ tends to zero:
\begin{equation}\label{eq:cosol}
w_h\rightarrow w \mbox{ in }L^2(0,T;H_0^1(0,1)) \mbox{ and } v_h\rightarrow w' \mbox{ in }L^2(0,T;L^2(0,1)),
\end{equation}
where  $w$  is the unique solution of \eqref{eq:varcont0}. \end{proposition}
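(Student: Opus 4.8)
The plan is to upgrade the weak convergence of Proposition~\ref{prop:convweak} to strong convergence by the classical energy method. Since the weak$^\ast$ limits there are in particular weak limits in the Hilbert space $L^2(0,T;H^1_0(0,1))\times L^2(0,T;L^2(0,1))$, we have $(w_h,v_h)\rightharpoonup(w,w')$ weakly in that space, and it therefore suffices to prove convergence of the corresponding squared norms,
\begin{equation}\label{eq:planA}
\int_0^T \|w_h(t)\|_{H_0^1}^2\,{\rm d}t \to \int_0^T \|w(t)\|_{H_0^1}^2\,{\rm d}t,\qquad \int_0^T \|v_h(t)\|_{L^2}^2\,{\rm d}t \to \int_0^T \|w'(t)\|_{L^2}^2\,{\rm d}t,
\end{equation}
because in a Hilbert space weak convergence together with norm convergence implies strong convergence. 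Here I use that, by \eqref{eq:df} and the definitions of $K_h$ and $M_h$, one has $\|w_h(t)\|_{H_0^1}^2=\langle K_hW_h(t),W_h(t)\rangle$ and $\|v_h(t)\|_{L^2}^2=\langle M_hW_h'(t),W_h'(t)\rangle$.

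First I would establish the discrete energy identity. Multiplying \eqref{eq.generalf} by $W_h'(t)$ in the canonical inner product and writing $E_h(t)=\frac12\left(\langle M_hW_h'(t),W_h'(t)\rangle+\langle (K_h+L_h)W_h(t),W_h(t)\rangle\right)$ as in \eqref{eq:encon}, one gets $\frac{d}{dt}E_h(t)=\langle M_hG_h(t),W_h'(t)\rangle=(g_h(t),v_h(t))_{L^2}$, hence
\begin{equation}\label{eq:planB}
E_h(t)=E_h(0)+\int_0^t (g_h(s),v_h(s))_{L^2}\,{\rm d}s,
\end{equation}
the continuous counterpart being $\mathcal{E}(t)=\mathcal{E}(0)+\int_0^t (g(s),w'(s))_{L^2}\,{\rm d}s$ with $\mathcal{E}(t)=\frac12\left(\|w'(t)\|_{L^2}^2+\|w(t)\|_{H_0^1}^2+\int_0^1 a|w(t)|^2\,{\rm d}x\right)$. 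I would then check that $E_h(t)\to\mathcal{E}(t)$ for each $t$: the initial term converges because $\mathbb{P}_1w^0\to w^0$ in $H_0^1$ and $\mathbb{P}_2w^1\to w^1$ in $L^2$ by \eqref{eq:copr}, together with \eqref{eq:retip} and the comparison $\langle L_hW_h^0,W_h^0\rangle=\langle S_hW_h^0,W_h^0\rangle+o(1)=(a\,\mathbb{P}_1w^0,\mathbb{P}_1w^0)_{L^2}+o(1)$ provided by Remark~\ref{rem:tresmat} applied to the fixed data vector; while the forcing integral converges since $g_h=\mathbb{P}_2g\to g$ strongly in $L^2(0,T;L^2)$ and $v_h\rightharpoonup w'$ weakly there. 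The uniform bound \eqref{eq:energyd} makes $E_h(\cdot)$ equibounded, so dominated convergence gives $\int_0^T E_h(t)\,{\rm d}t\to\int_0^T\mathcal{E}(t)\,{\rm d}t$.

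The final, and most delicate, step is to separate this limit into its three nonnegative constituents. Set $a_h=\int_0^T\langle K_hW_h,W_h\rangle\,{\rm d}t$, $b_h=\int_0^T\langle M_hW_h',W_h'\rangle\,{\rm d}t$ and $c_h=\int_0^T\langle L_hW_h,W_h\rangle\,{\rm d}t$, so that $a_h+b_h+c_h=2\int_0^T E_h\to A+B+C$, the three continuous integrals. Weak lower semicontinuity of the norms gives $\liminf a_h\ge A$ and $\liminf b_h\ge B$; for the potential term I would first use Remark~\ref{rem:tresmat} and \eqref{eq:nuh2} (uniformly in $t$, as $\langle K_hW_h(t),W_h(t)\rangle$ is bounded by \eqref{eq:energyd}) to replace $c_h$ by $\int_0^T(a\,w_h,w_h)_{L^2}\,{\rm d}t$ up to $o(1)$, and then invoke weak lower semicontinuity of the convex functional $u\mapsto\int_0^T\int_0^1 a|u|^2$, which is licit precisely because $a\ge0$ by \eqref{eq:pot}, to obtain $\liminf c_h\ge C$. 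Since the three liminf bounds sum exactly to the limit of $a_h+b_h+c_h$, an elementary subsequence argument (every subsequence has a further one along which each of the bounded sequences converges, and the limits are forced to be $A$, $B$, $C$) yields $a_h\to A$, $b_h\to B$, $c_h\to C$ separately; in particular \eqref{eq:planA} holds and the strong convergence \eqref{eq:cosol} follows.

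I expect the main obstacle to be the potential term and, more conceptually, the realization that strong compactness of $(w_h)$ can be bypassed here. A direct Aubin--Lions argument is awkward in the mixed setting because $v_h=\sum_j w_j'\psi_j$ is \emph{not} the time derivative $\partial_t w_h=\sum_j w_j'\varphi_j$, and the mass matrices associated with $\{\varphi_j\}$ and $\{\psi_j\}$ fail to be uniformly spectrally equivalent, so $\partial_t w_h$ need not stay bounded in $L^2(0,T;L^2)$. The energy-plus-lower-semicontinuity scheme avoids this entirely, relying only on the weak limits already established, the identity \eqref{eq:planB}, and the sign condition $a\ge0$, which is exactly what makes the three-term splitting close.
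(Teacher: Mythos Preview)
Your proof is correct and follows essentially the same energy-method route as the paper: derive the discrete and continuous energy identities, use the already-established weak convergence and the convergence of data and forcing, and exploit $a\ge 0$ to close. The only difference is bookkeeping: the paper expands $\int_0^T\!\big(\|w'-v_h\|_{L^2}^2+\|w-w_h\|_{H_0^1}^2+(a(w-w_h),w-w_h)_{L^2}\big)\,{\rm d}t$ directly and shows it vanishes, whereas you first prove $\int_0^T E_h\to\int_0^T\mathcal{E}$ and then split via lower semicontinuity---the same ingredients, repackaged.
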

\begin{proof}
Multiplying  \eqref{eq.generalf} by $W_h'$ and integrating by parts, we deduce that 
\begin{multline}
\left \langle M_h W_h'(t),W_h'(t)\right\rangle+\left\langle K_h W_h(t),W_h(t)\right\rangle+\left\langle L_h W_h(t),W_h(t)\right\rangle\\ =2\int_0^t \left\langle M_h G_h(s),W_h'(s)\right\rangle\,{\rm d}s + \left\langle M_h W_h^1,W_h^1\right\rangle+\left\langle K_h W_h^0,W_h^0\right\rangle+\left\langle L_h W_h^0,W_h^0\right\rangle,
\end{multline}
which is equivalent to 
\begin{multline}\label{eq:endisba}
\left(v_h(t),v_h(t)\right)_{L^2}+\left(w_h(t),w_h(t)\right)_{H_0^1}+\left(a w_h(t),w_h(t)\right)_{L^2}=2\int_0^t \left\langle M_h G_h(s),W_h'(s)\right\rangle\,{\rm d}s \\ + \left(v_h^0,v_h^0\right)_{L^2}+\left(w_h^0,w_h^0\right)_{H_0^1} + \left(a w_h^0,w_h^0\right)_{L^2}
+\left\langle(L_h-R_h) W_h^0,W_h^0\right\rangle-\left\langle(L_h-R_h) W_h(t),W_h(t)\right\rangle.
\end{multline}

On the other hand, the following relation is verified by the solution $u$ of \eqref{eq:varcont0}: 
\begin{multline}\label{eq:enconba}
\left(w'(t),w'(t)\right)_{L^2}+\left(w(t),w(t)\right)_{H_0^1}+\left(a w(t),w(t)\right)_{L^2}\\ =2\int_0^t \left( g(s),w'(s)\right)_{L^2}\,{\rm d}s + \left(w^1,w^1\right)_{L^2}+\left(w^0,w^0\right)_{H_0^1} + \left(a w^0,w^0\right)_{L^2}.
\end{multline}

By adding \eqref{eq:endisba} and \eqref{eq:enconba} we deduce that
\begin{align*}
&\frac{1}{2}\int_0^T\left(\left\|w'(t)-v_h(t)\right\|^2_{L^2}+\left\|w(t)-w_h(t)\right\|^2_{H_0^1}+\left(a w(t)-aw_h(t),w(t)-w_h(t)\right)_{L^2}\right)\,{\rm d}t\\&=-\int_0^T\left(\left(w'(t),v_h(t)\right)_{L^2}+ \left(w(t),w_h(t)\right)_{H_0^1}+\left(a w(t),w_h(t)\right)_{L^2}\right)\,{\rm d}t\\
&+\int_0^T\left(\int_0^t \left\langle M_h G_h(s),W_h'(s)\right\rangle\,{\rm d}s\right)\,{\rm d}t+\int_0^T\int_0^t \left(f(s),w'(s)\right)_{L^2}\,{\rm d}s \\
&+\frac{T}{2}\left(\left(v_h^0,v_h^0\right)_{L^2}+\left(w_h^0,w_h^0\right)_{H_0^1} + \left(a w_h^0,w_h^0\right)_{L^2}+ \left(w^1,w^1\right)_{L^2}+\left(w^0,w^0\right)_{H_0^1} + \left(a w^0,w^0\right)_{L^2}\right)\\&+\frac{T}{2}\left\langle (L_h-R_h) W_h^0,W_h^0\right\rangle-\int_0^T \left\langle (L_h-R_h) W_h(t),W_h(t)\right\rangle\,{\rm d}t.\end{align*}

The proposition is proved if we show that the right hand side tends to zero. This is a consequence of  \eqref{eq:enconba} and of the following convergence results: $v_h\rightharpoonup w'$ in $L^2(0,T;L^2(0,1))$,  $w_h\rightharpoonup w$ in $L^2(0,T;H_0^1(0,1))$ (from Proposition \ref{prop:convweak}),  $v^0_h\rightarrow w^1$ in $L^2(0,1)$, $w^0_h\rightarrow w^0$ in $H_0^1(0,1)$ (from Proposition \ref{prop.csim}), 
\begin{equation}\label{eq.conve10}
\frac{T}{2}\left\langle (L_h-R_h) W_h^0,W_h^0\right\rangle-\int_0^T \left\langle (L_h-R_h) W_h(t),W_h(t)\right\rangle\,{\rm d}t\rightarrow 0,
\end{equation}
(similarly to \eqref{eq:c4great}) and 
$$
\int_0^T \int_0^t \left\langle M_h G_h(s),W_h'(s)\right\rangle\,{\rm d}s \,{\rm d}t =\int_0^T \int_0^t\left(\mathbb{P}_2g(s),v_h(s)\right)_{L^2}\,{\rm d}s \,{\rm d}t   \rightarrow 
\int_0^T \int_0^t \left( g(s),w'(s)\right)_{L^2}\,{\rm d}s
$$
(which follows from \eqref{eq:copr} and Proposition \ref{prop:convweak}). The proof of the proposition is complete.
\end{proof}

\begin{remark}\label{rem:cpu} An argument similar to the one presented in the proof of Proposition \ref{prop.cosol} allows us to deduce the following pointwise in time convergence properties :
\begin{equation}
w_h(t)\rightarrow w(t) \mbox{ in }H_0^1(0,1) \mbox{ and } v_h(t)\rightarrow w'(t) \mbox{ in }L^2(0,1)\qquad (t\in[0,T]).
\end{equation}
\end{remark}

Finally, we give our main convergence result concerning the normal derivatives of the solutions.

\begin{theorem}[\bf Convergence of normal derivatives]\label{te:codernor} Assume that $W_h$ is the solution of \eqref{eq.generalf} with initial data $\begin{pmatrix}
W_{h}^0 \\
W_{h}^1
\end{pmatrix}$ and non-homogeneous term $G_h$ given by \eqref{eq:disdatain2}. If $w$  is the unique solution of \eqref{eq:varcont0}, then
\begin{equation}\label{eq:conor}
\frac{w_1(\,\cdot\,)}{h} \to w_x(0,\,\cdot\,) \mbox{ and } w_1'(\,\cdot\,) \to 0 \mbox{ in } L^2(0,T) \mbox{ as }h\rightarrow 0.
\end{equation}
\end{theorem}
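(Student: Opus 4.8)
The plan is to transport to the discrete level the multiplier (Rellich--Pohozaev) proof of the ``hidden regularity'' of the normal derivative for the continuous equation, and then to combine the resulting identity with the strong convergence of the solutions established in Propositions \ref{prop:convweak}--\ref{prop.cosol} and Remark \ref{rem:cpu}. Throughout, $W_h=[w_j]$ is the solution of \eqref{eq.generalf}, $w$ the solution of \eqref{eq:varcont0}, and I use the multiplier associated with $q(x)=1-x$, i.e. its nodal vector $Q_h=[\,1-x_j\,]_{1\le j\le N}$ and the discrete multiplier $P_j(t)=(h^{-1}-j)\bigl(w_{j+1}(t)-w_{j-1}(t)\bigr)$, a discretization of $2q\,w_x$. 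First I would establish a discrete Rellich identity: multiplying the $j$-th equation of \eqref{eq.dis0} by $P_j$, summing over $1\le j\le N$ and integrating over $(0,T)$, the discrete summations by parts \eqref{eq.33bj1}--\eqref{eq.33bj2} (applied to $W_h$ and to $W_h'$, respectively) produce on the left-hand side exactly the two nonnegative boundary contributions, so that
\begin{equation}\label{eq:plan_rellich}
\int_0^T\left(\left|\frac{w_1(t)}{h}\right|^2+\left|\frac{w_1'(t)}{2}\right|^2\right)\,{\rm d}t=\mathcal R_h,
\end{equation}
where $\mathcal R_h$ gathers the discrete kinetic and potential energies $\frac h4\int_0^T\sum_j|w_{j+1}'+w_j'|^2$ and $\frac1h\int_0^T\sum_j|w_{j+1}-w_j|^2$, the time-boundary terms evaluated at $t=0$ and $t=T$, and the contributions of $L_h$ and of the source $M_hG_h$. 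The crucial structural point, inherited from the non-diagonal mass matrix, is that the velocity term enters \eqref{eq:plan_rellich} with the \emph{same} sign as $|w_1/h|^2$: in the continuous Rellich identity the flux $\tfrac12 q(0)|w'(t,0)|^2$ vanishes because $w'(t,0)=0$, whereas the discrete boundary velocity $w_1'/2$ does not vanish and survives as the nonnegative term $|w_1'/2|^2$. Using the energy estimate \eqref{eq:energyd} together with the strong convergences $w_h\to w$, $v_h\to w'$ of Proposition \ref{prop.cosol} and the pointwise-in-time convergence of Remark \ref{rem:cpu} for the terms at $t=0,T$, each piece of $\mathcal R_h$ converges to the corresponding term of the continuous Rellich identity, whose value is $\int_0^T|w_x(t,0)|^2\,{\rm d}t$. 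Hence $\mathcal R_h$ is bounded, so $w_1/h$ and $w_1'$ are bounded in $L^2(0,T)$, and moreover
\begin{equation}\label{eq:plan_normlimit}
\left\|\frac{w_1}{h}\right\|^2_{L^2(0,T)}+\left\|\frac{w_1'}{2}\right\|^2_{L^2(0,T)}\longrightarrow \int_0^T|w_x(t,0)|^2\,{\rm d}t .
\end{equation}

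Next I would identify the weak limits. The key algebraic fact is that, for the nodal vector $Q_h=[\,1-x_j\,]$, a direct discrete summation by parts gives the \emph{exact} relation $\langle K_hW_h(t),Q_h\rangle=w_1(t)/h$. Testing the matrix equation \eqref{eq.generalf} against $Q_h$, multiplying by $p\in\mathcal D(0,T)$, integrating, and integrating the second-order term by parts twice in time yields
\[
\int_0^T\frac{w_1(t)}{h}\,p(t)\,{\rm d}t=\int_0^T\Bigl[\langle M_hG_h,Q_h\rangle\,p-\langle M_hW_h,Q_h\rangle\,p''-\langle L_hW_h,Q_h\rangle\,p\Bigr]\,{\rm d}t .
\]
Passing to the limit with the help of the convergences $\langle M_hW_h,Q_h\rangle\to(w,q)_{L^2}$ and $\langle L_hW_h,Q_h\rangle\to(aw,q)_{L^2}$ (Remark \ref{rem:tresmat} and \eqref{eq:c2great}, in which the lower-order factor $q=1-x$ poses no boundary-layer difficulty) and $\langle M_hG_h,Q_h\rangle\to(g,q)_{L^2}$, the right-hand side converges to the limit of the analogous continuous computation: writing $w_{xx}=w''+aw-g$ and integrating by parts twice in $x$ against $q$ (using $q(0)=1$, $q(1)=0$, $q''=0$), one obtains exactly $\int_0^T w_x(t,0)\,p(t)\,{\rm d}t$. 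Combined with the $L^2$-boundedness from the previous step, this shows $w_1/h\rightharpoonup w_x(\,\cdot\,,0)$ weakly in $L^2(0,T)$. For the second component, $w_1=h\,(w_1/h)\to0$ in $L^2(0,T)$ because $w_1/h$ is bounded, and $\int_0^T w_1'\,p=-\int_0^T w_1\,p'\to0$ for every $p\in\mathcal D(0,T)$, so $w_1'\rightharpoonup0$.

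Finally I would upgrade to strong convergence. Since the left-hand side of \eqref{eq:plan_normlimit} is the sum of the two nonnegative quantities $\|w_1/h\|^2$ and $\|w_1'/2\|^2$, and their sum converges to $\|w_x(\,\cdot\,,0)\|^2_{L^2(0,T)}$, while the weak convergences just proved give, by lower semicontinuity, $\liminf\|w_1/h\|^2\ge\|w_x(\,\cdot\,,0)\|^2$, the nonnegativity of $\|w_1'/2\|^2$ forces both $\|w_1/h\|^2\to\|w_x(\,\cdot\,,0)\|^2$ and $\|w_1'/2\|^2\to0$. Norm convergence together with the weak convergences yields the strong convergences $\frac{w_1}{h}\to w_x(\,\cdot\,,0)$ and $w_1'\to0$ in $L^2(0,T)$ claimed in \eqref{eq:conor}; the uniqueness of the limits makes the extraction of subsequences unnecessary.

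I expect the main obstacle to be the rigorous derivation of the discrete Rellich identity \eqref{eq:plan_rellich} with the correct \emph{sum} structure of the two boundary terms, together with the control of the numerous consistency and commutator errors generated by the non-diagonal mass matrix, the trapezoidal discretization \eqref{eq:trap} of the potential, and the multiplier $P_j$ itself; these must all be shown to be of lower order and to vanish as $h\to0$, in the same spirit as the estimates \eqref{eq.33bj1}--\eqref{eq.33bj3} of Lemma \ref{te.obsing}. The convergence $\mathcal R_h\to\int_0^T|w_x(t,0)|^2\,{\rm d}t$ relies crucially on the energy-space strong convergence of Proposition \ref{prop.cosol} and the pointwise-in-time convergence of Remark \ref{rem:cpu}, which is precisely why those statements were prepared beforehand.
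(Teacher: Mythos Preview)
Your proposal is correct and follows essentially the same route as the paper: the discrete Rellich identity obtained with the multiplier $(h^{-1}-j)(w_{j+1}-w_{j-1})$ (the paper's Step~1), the resulting uniform bound and identification of the weak limit of $w_1/h$ via the exact relation $\langle K_hW_h,Q_h\rangle=w_1/h$ with $Q_h=[1-x_j]$ (the paper's Steps~2--3), and finally the passage to the limit term by term in the identity to get convergence of norms and hence strong convergence (the paper's Step~4). The only minor difference is that you also identify the weak limit of $w_1'$ explicitly, whereas the paper dispenses with this since $\|w_1'/2\|^2\to0$ already follows from the norm identity and lower semicontinuity applied to $w_1/h$ alone.
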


\begin{proof} {\bf Step 1 (an identity):} In this first step we show that the following relation holds:
\begin{eqnarray}\nonumber
-\int_0^T\left(\frac{h}{8} \sum_{j=0}^N |w'_{j+1}+w'_{j}|^2 -
\frac{1}{8}|w'_1|^2\right)\,{\rm d}t -\int_0^T \left(\frac{1}{2h}\sum_{j=0}^N |w_{j+1}-w_{j}|^2
-\frac{1}{2h^2}|w_1|^2\right)\,{\rm d}t\\  \label{eq:car1}
+\Re (X_h(t)) \big{|}_0^T+ \int_0^T \Re \left(\sum_{j=1}^N (h a_j w_j - \widetilde{g}_j)(1-jh)\frac{\overline{w}_{j+1}-\overline{w}_{j-1}}{2h}\right)\,{\rm d}t=0,
\end{eqnarray}
where 
\begin{equation}\label{eq:xht}
X_h(t)= \frac{h}{4} \sum_{j=1}^N (w_{j-1}'+2w_j'+w_{j+1}')(1-hj)\frac{\overline{w}_{j+1}-\overline{w}_{j-1}}{2h}.
\end{equation}
In fact, \eqref{eq:car1} is easily deduced by multiplying the $j-$th equation in \eqref{eq.generalf} by the discrete multiplier $(1-hj)(\overline{w}_{j+1}-\overline{w}_{j-1})/(2h)$, adding in $j=1,...,N$ and integrating in time. 
For example, when considering the first term in \eqref{eq.generalf} we obtain,
\begin{align*}
& \int_0^T\Re\left[ \frac{h}4 \sum_{j=1}^N (w_{j-1}''+2w_j''+w_j'')(1-hj)\frac{\overline{w}_{j+1}-\overline{w}_{j-1}}{2h}\right] \,{\rm d}t = \Re (X_h(t)) \big{|}_0^T \\
& - \frac18 \int_0^T\Re\left[\sum_{j=1}^N \left((w_{j-1}'+w_j')+(w_{j}'+w_{j+1}') \right) \left((\overline{w}_{j}'+\overline{w}_{j+1}') -(\overline{w}_{j-1}'+\overline{w}_j')\right) (1-hj) \; \right]\,{\rm d}t \\
& =\Re (X_h(t)) \big{|}_0^T - \frac18 \int_0^T\sum_{j=1}^N \left[|w_{j+1}'+w_j'|^2 - |w_{j-1}'+w_j'|^2 \right] (1-hj)  \,{\rm d}t\\
& =\Re (X_h(t)) \big{|}_0^T -\int_0^T\left( \frac{h}8\sum_{j=0}^N |w_{j+1}'+w_j'|^2 - \frac{1}{8}|w_1|^2\right)\,{\rm d}t. 
\end{align*}
Analogously, for the second term in \eqref{eq.dis0},
\begin{align*}
&\int_0^T \Re\left[ \frac{1}h \sum_{j=1}^N (-w_{j-1}+2w_j-w_j)(1-hj)\frac{\overline{w}_{j+1}-\overline{w}_{j-1}}{2h}\right] \,{\rm d}t \\
& =\frac1{2h^2} \int_0^T\Re\left[\sum_{j=1}^N \left[ (w_{j}-w_{j-1})-(w_{j+1}-w_{j}) \right] \left[ (\overline{w}_{j}+\overline{w}_{j-1})+(\overline{w}_{j+1}+\overline{w}_{j}) \right] (1-hj) \right]\,{\rm d}t \\
& = \frac1{2h^2} \int_0^T\sum_{j=1}^N \left[|w_{j}-w_{j-1}|^2 - |w_{j+1}+w_j|^2 \right] (1-hj) \,{\rm d}t\\
& = \int_0^T\left(-\frac1{2h}\sum_{j=0}^N |w_{j+1}-w_j|^2 + \frac{1}{2h^2}|w_1|^2\right)\,{\rm d}t.
\end{align*}
We conclude that \eqref{eq:car1} holds. 

{\bf Step 2 (uniform boundedness):} From \eqref{eq:car1} a straightforward computation shows that there exists a constant $C>0$ such that  
\begin{equation} \label{eq_caso1}
\int_0^T \left(\left| \frac{w_1(t)}{h}\right|^2 +\left|\frac{w_1'(t)}{2}\right|^2 \right) \,{\rm d}t \leq C \left( \int_0^T \left(  a_M  E_h (s) +  \| G_h(t) \|_M^2 \right)\,{\rm d}s  +E_h (T)+E_h (0)\right) ,
\end{equation}
where $E_h$ is given by \eqref{eq:encon}.  For example, to estimate $X_h$ we combine Young's inequality and the following two estimates:
\begin{eqnarray*}
    && \frac{h}{4^2} \sum_{j=1}^N |w_{j-1}'+2w_{j}'+w_{j+1}'|^2 \leq \frac{h}{4^2} \sum_{j=1}^N 2\left[|w_{j-1}'+w_{j}'|^2+|w_{j}'+w_{j+1}'|^2\right] \leq \| W'\|^2_M.\\
    && h\sum_{j=1}^N \left| \frac{w_{j+1}-w_{j-1}}{2h} \right|^2 \leq h\sum_{j=1}^N \frac12 \left[\left| \frac{w_{j+1}-w_j}{h} \right|^2 + \left| \frac{w_{j}-w_{j-1}}{h} \right|^2\right]  \leq  \| W\|_1^2.
\end{eqnarray*}
From \eqref{eq_caso1}, taking into account \eqref{eq:energyd}, we easily deduce that there exists a constant $C>0$, independent of $h$, such that
\begin{equation} \label{eq:bound_c}
\int_0^T \left(\left| \frac{w_1(t)}{h}\right|^2 +\left|\frac{w_1'(t)}{2}\right|^2\right) \,{\rm d}t\leq C\left( \|(W_h^0,W_h^1)\|_{1,M}^2 + \int_0^T \| G_h(t) \|_M^2 \,{\rm d}t\right). 
\end{equation}

{\bf Step 3 (weak convergence):} From the previous step, it follows that there exists a subsequence, still denoted by $h$, such that 
$$
\frac{w_1(\,\cdot\,)}{h} \to z(\,\cdot\,)  \mbox{ weakly in }L^2(0,T) \mbox{ as } h \to 0,
$$
for some $z\in L^2(0,T)$. In this step we identify this limit and show that $z(t)=w_x(0,t)$.  

Multiplying the $j$-equation in \eqref{eq.generalf} by $(1-hj)p(t)$, where $p\in C_0^\infty(0,T)$, and adding in $j$ we easily obtain 
\begin{eqnarray*}
    0&=&\int_0^T\frac{h}4\sum_{j=1}^N (2w_j+w_{j+1}+w_j)(1-hj)p''(t)\,{\rm d}t +\int_0^T \frac{w_1}{h}p(t) \,{\rm d}t \\
    && +\int_0^T \left(h\sum_{j=1}^Na_jw_j(1-hj)-\sum_{j=1}^N \widetilde{g}_j(1-hj)\right) p(t) \,{\rm d}t \\
    &=& \int_0^T \langle M_h W_h ,Y_h\rangle \;  p''(t)\,{\rm d}t +\int_0^T \frac{w_1}{h}p(t)  \,{\rm d}t \\
    && + \int_0^T \left( \langle L_hW_h,Y_h\rangle - \langle M_h G_h,Y_h\rangle \right) p(t) \,{\rm d}t,
\end{eqnarray*}
where $Y_h=[1-x_j]_{1\leq j\leq N}$. Now, taking into account that 
\begin{eqnarray*}
    && \langle M_h W_h ,Y_h\rangle \to (w,1-x)_{L^2},\,   \langle L_hW_h,Y_h\rangle \to (aw,1-x)_{L^2},\, \langle M_h G_h,Y_h\rangle \to (g,1-x)_{L^2},
\end{eqnarray*}
as $h\to 0$ in $L^\infty(0,T)$, we obtain 
\begin{align} \nonumber
   \lim_{h\to 0} \int_0^T \frac{w_1}{h}p(t) \,{\rm d}t  &= -\int_0^T (w,1-x)_{L^2} p''(t) \,{\rm d}t - \int_0^T \left((aw,1-x)_{L^2}-(g,1-x)_{L^2} \right)p(t) \,{\rm d}t \\ \label{eq:we_co}
    & =\int_0^T w_x(t,0) \;  p(t) \,{\rm d}t,
\end{align}
 where the last equality is obtained by multiplying the equation of $w$ by $(1-x)p(t)$ and integrating. From \eqref{eq:we_co} we obtain the weak convergence of $w_1(\,\cdot\,)/h$ to $w_x(\,\cdot\, ,0)$ in $L^2(0,T)$.

{\bf Step 4 (convergence of norms):} In this step we prove that
\begin{equation}\label{eq:dnnorconv}
\left\|\frac{w_1(\,\cdot\,)}{h}\right\|^2_{L^2(0,T)} + \left\|\frac{w_1'(\,\cdot\,)}{2}\right\|^2_{L^2(0,T)} \to \| w_x(0,\,\cdot\,)\|_{L^2(0,T)}^2  \mbox{ as } h \to 0.
\end{equation}
Once this is proved, the lower semicontinuity of the norm with respect to the weak topology established in Step 3 above gives 
$$
 \| w_x(0,\,\cdot\,)\|_{L^2(0,T)}^2 \leq \liminf_{h\to 0}  \left(\left\|\frac{w_1(\,\cdot\,)}{h}\right\|^2_{L^2(0,T)}  + \left\|\frac{w_1'(\,\cdot\,)}{2}\right\|^2_{L^2(0,T)} \right) = \| w_x(0,\,\cdot\,)\|_{L^2(0,T)}^2 . 
$$
This means in particular,
\begin{eqnarray*}
    && \lim_{h\to 0} \left\|\frac{w_1(\,\cdot\,)}{h}\right\|^2_{L^2(0,T)}  = \| w_x(0,\,\cdot\,)\|_{L^2(0,T)}^2,\quad 
      \lim_{h\to 0} \left\|\frac{w_1'(\,\cdot\,)}{2}\right\|^2_{L^2(0,T)} =0.
\end{eqnarray*}
This norm convergence, together with the result from Step 3, allow to deduce that \eqref{eq:conor} holds and the proof of the Proposition is complete.  Therefore, it remains to prove that \eqref{eq:dnnorconv} is verified.  Note that in \eqref{eq:car1} we already have two terms representing the $L^2(0,T)-$norms of $w_1/h$  and $w_1'/2$. We only have to analyze the convergence of the remaining terms as $h\to 0$. 

From Proposition \ref{prop.cosol} we deduce that 
\begin{align}\label{eq:comul1}
&\int_0^T \left(\frac{h}{4}\sum_{j=0}^N (w_{j+1}'+w_{j}')^2\right) \,{\rm d}t = \int_0^T \|v_h(t)\|^2_{L^2}\,{\rm d}t \rightarrow \int_0^T\|w'(t)\|_{L^2}^2\,{\rm d}t\mbox{ as }h\rightarrow 0,\\
&\int_0^T \left(\frac{1}{h}\sum_{j=0}^N (w_{j+1}-w_{j})^2\right) \,{\rm d}t=\int_0^T \|w_h(t)\|^2_{H_0^1}\,{\rm d}t \rightarrow \int_0^T\|w(t)\|_{H_0^1}^2\,{\rm d}t\mbox{ as }h\rightarrow 0.
\end{align}

Concerning the term $X_h(t)$ for $t=0,T$, we prove that 
\begin{equation}\label{eq:comul2}
X_h(t) \to \int_0^1 w'(t,x)(1-x)\overline{w}_x(t,x)\,{\rm d}x\mbox{ as }h\rightarrow 0.
\end{equation}
Firstly, by using \eqref{eq:proj3} and \eqref{eq:df}, we easily deduce that the following relation is verified:
\begin{equation}\label{eq:fxht}X_h(t) -\int_0^1 \mathbb{P}_3v_h(t,x)(1-x)(\overline{w}_{h})_x(t,x) \,{\rm d}x=-h \left\langle M_h W_h'(t),K_h W_h(t)\right\rangle.\end{equation}
Since, for each $t\in[0,T]$,  we have that
\begin{equation}\label{eq:likein} \left| h\left\langle M_h W_h'(t),K_h W_h(t)\right\rangle  \right| \leq h\|M_h\|^\frac{1}{2} \| M_h^\frac{1}{2}W_h'(t)\| \|K_h\|^{\frac{1}{2}}\|K_h^\frac{1}{2} W_h(t)\| ,\end{equation}
from the energy estimate \eqref{eq:energyd} and the definition of the matrices $K_h$ and $M_h$  
we obtain that the right hand side term in \eqref{eq:fxht} tends to $0$ as $h\to 0$. On the other hand, taking into account the convergence results from Proposition \ref{prop.csim} and Remark \ref{rem:cpu}, it follows  that  \eqref{eq:comul2} results by passing to the limit in \eqref{eq:fxht}. 

Now we pass to evaluate the potential term in \eqref{eq:car1}. We remark that \eqref{eq:df} allows us to deduce the following relations
\begin{align}
&\left|\int_0^1 a(x) (1-x) w_h(t,x) \overline{w}_{h,x}(t,x)\,{\rm d}x - \sum_{j=1}^N a_j w_j(t)(1-jh) \frac{\overline{w}_{j+1}(t)-\overline{w}_{j-1}(t)}{2}\right|\nonumber \\ & =\left|\sum_{j=1}^N \widetilde{q}_{jj-1}\left(\overline{w}_{j-1}(t)-\overline{w}_{j}(t)\right) w_j(t) +\sum_{j=1}^N \widetilde{q}_{jj+1}\left(\overline{w}_{j}(t)-\overline{w}_{j+1}(t)\right) w_j(t)\right| \nonumber  \\ & \leq h\max_{1\leq j\leq N}\left\{ |\widetilde{q}_{j j+1}(h)|,|\widetilde{q}_{jj-1}(h)|\right\} \sum_{j=1}^N \left(\left|\frac{\overline{w}_{j-1}(t)-\overline{w}_{j}(t)}{h}\right|^2 +\left| w_j(t) \right|^2\right),\label{eq:amu}
\end{align}
where $$\begin{array}{cl}\widetilde{q}_{jj-1}(h)= 
-\frac{1}{h^2}\int_{x_{j-1}}^{x_j}(1-x) a(x) (x-x_{j-1})\,{\rm d}x +\frac{1}{2}a_j (1-x_j), \\ \\
\widetilde{q}_{jj+1}(h)=\frac{1}{h^2}\int_{x_{j}}^{x_{j+1}}(1-x) a(x) (x_{j+1}-x)\,{\rm d}x-\frac{1}{2}a_j (1-x_j).
\end{array}$$
Since $a\in C[0,1]$, we deduce that, for any $\varepsilon>0$, there exists $h_\varepsilon>0$ such that 
\begin{equation}\label{eq:lia1}
\max_{1\leq j\leq N}\left\{ |\widetilde{q}_{j j+1}(h)|,|\widetilde{q}_{jj-1}(h)|\right\}<\varepsilon\qquad (h\in(0,h_\varepsilon)).
\end{equation}
Taking into account \eqref{eq:energyd}, \eqref{eq:boh03} and \eqref{eq:lia1},  we deduce  that the right hand side term in \eqref{eq:amu} vanishes as $h\to 0$. Proposition \ref{prop.cosol} implies  that the following convergence result holds as $h$ tends to $0$:
\begin{align}
\int_0^T \sum_{j=1}^N a_j w_j(t)(1-jh) \frac{\overline{w}_{j+1}(t)-\overline{w}_{j-1}(t)}{2}\,{\rm d}t  \rightarrow \int_0^T\int_0^1 a(x) (1-x)w(t,x)\overline{w}_x(t,x)\,{\rm d}x\,{\rm d}t.\label{eq:comul3}
\end{align}
Finally, let us remark that, according to Proposition \ref{prop.csim}, the family $\left(\widetilde{g}_h(t)\right)_h$, defined by $$\widetilde{g}_h(t)=\mathbb{P}_3g= \sum_{j=1}^N \frac{1}{h}\widetilde{g}_j(t) \psi_j,\qquad \widetilde{g}_j(t)=\frac{1}{2}\int_{x_{j-1}}^{x_{j+1}}g(t,x)\,{\rm d}x  \quad \left(1\leq j\leq N\right),$$ converges to $g(t)$ in $L^2(0,1)$. Moreover, we have that 
\begin{align*}
&\int_0^1 \widetilde{g}_h(t,x) (1-x) \overline{u}_{h,x}(t,x)\,{\rm d}x - \sum_{j=1}^N \widetilde{g}_j(t) (1-jh) \frac{\overline{w}_{j+1}(t)-\overline{w}_{j-1}(t)}{2h}\\ &=\sum_{j,k=1}^N \frac{\widetilde{g}_j(t)}{h}\overline{w}_k(t)  \int_0^1(1- x) \psi_{j}(x)\varphi_{k,x}(x)\,{\rm d}x - \sum_{j=1}^N \widetilde{g}_j(t)(1-jh) \frac{\overline{w}_{j+1}(t)-\overline{w}_{j-1}(t)}{2h}\\& =\sum_{j=1}^N \frac{\widetilde{g}_j(t)}{2h}\left(\left(-(1-jh)-\frac{h}{2}\right)\overline{w}_{j-1}(t) +h\overline{w}_{j}(t)+\left((1-jh)-\frac{h}{2}\right)\overline{w}_{j+1}(t)  \right)\\ &- \sum_{j=1}^N \widetilde{g}_j(t)(1-jh) \frac{\overline{w}_{j+1}(t)-\overline{w}_{j-1}(t)}{2h} =\frac{h}{4}\left\langle M_h G_h(t),K_h W_h(t)\right\rangle.
\end{align*}
By using the same argument as in \eqref{eq:likein},  we obtain that the last term  above vanishes as $h$ goes to zero. From Proposition \ref{prop.cosol} it results that the following convergence result holds as $h$ tends to $0$:
\begin{align}
&\int_0^T  \sum_{j=1}^N \widetilde{g}_j(t) w_j(t)(1-jh) \frac{\overline{w}_{j+1}(t)-\overline{w}_{j-1}(t)}{2h} \,{\rm d}t \rightarrow \int_0^T\int_0^1 g(t,x) (1-x)\overline{u}_x(t,x)\,{\rm d}x\,{\rm d}t.\label{eq:comul4}
\end{align}
From the following relation verified by the solution of the wave equation
\begin{align*}
&-\frac{1}{2}\int_0^T \left( \|w'(t)\|_{L_2}^2 +\|w(t)\|_{H_0^1}^2\right)\,{\rm d}t +\frac{1}{2}\int_0^T |w_x(t,0)|^2 \,{\rm d}t +\left.\Re \int_0^1 w'(t,x)(1-x)\overline{w}_x(t,x)\right|_0^T\\&+\int_0^T\int_0^1\Re\left[ (a(x) w(t,x)- g(t,x))(1-x)\overline{w}_x(t,x)\right]\,{\rm d}x\,{\rm d}t =0,
\end{align*}
by using \eqref{eq:car1}, \eqref{eq:comul1},  \eqref{eq:comul2},  \eqref{eq:comul3} and  \eqref{eq:comul4} we deduce that \eqref{eq:dnnorconv} holds true. 
\end{proof}

We end this section with the convergence result for the discrete observation.
\begin{corollary}\label{cor:final} Let $\begin{pmatrix}w^0\\w^1\end{pmatrix}\in H^2 (0,1)\cap H^1_0(0,1)\times H^1_0(0,1)$ and  $y$ be the observation \eqref{eq:obscon00} of the continuous inverse problem \eqref{eq:inv_cont0}. If $Y_h$ is the discrete observation defined by \eqref{eq:obsdis00}, where  $F_h=\widetilde{\mathbb{P}}_2 f$ and $\begin{pmatrix}U_h^0\\U_h^1\end{pmatrix}$  are given by \eqref{eq:datosdisc}, the following convergence result holds:
\begin{equation}\label{eq:conv_nor_der_final} 
\lim_{h\rightarrow 0} \left\| Y_h- y\right\|^2_{[L^2(0,T)]^2}=0.
\end{equation}
\end{corollary}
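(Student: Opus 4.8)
The plan is to reduce the statement to a direct application of Theorem \ref{te:codernor} for the mixed finite element approximation of $w'$. Recall from the decomposition introduced before \eqref{eq:inv_cont01}--\eqref{eq:inv_cont02} that $w'=v'+u$, where $v$ and $u$ solve \eqref{eq:inv_cont01} and \eqref{eq:inv_cont02}, respectively. At the discrete level I would introduce $P_h:=V_h'+U_h$, where $V_h$ and $U_h$ are the solutions of \eqref{eq.us00} and \eqref{eq.us000}. Writing $P_h=[p_j]_{1\leq j\leq N}$ with $p_j=v_j'+u_j$, the two components of the discrete observation \eqref{eq:obsdis00} become exactly $\frac{1}{h}p_1$ and $\frac{1}{2}p_1'$ (the latter because $p_1'=v_1''+u_1'$). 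Hence $Y_h=\left(\frac{p_1}{h},\,\frac{p_1'}{2}\right)^{\top}$, and the whole result will follow once I show that $P_h$ is the mixed finite element solution of the wave equation satisfied by $w'$.

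To this end I would differentiate the equation in \eqref{eq.us00} in time (which is legitimate since $\lambda\in H^1(0,T)$ forces $V_h\in H^3(0,T;\mathbb{C}^N)$) and add the equation in \eqref{eq.us000}, obtaining
\begin{equation*}
M_h P_h''(t)+K_h P_h(t)+L_h P_h(t)=\lambda'(t)\,M_h F_h .
\end{equation*}
For the initial data, $P_h(0)=U_h^0$ and, evaluating \eqref{eq.us00} at $t=0$ with $V_h(0)=0$ to get $V_h''(0)=\lambda(0)F_h$, one finds $P_h'(0)=\lambda(0)F_h+U_h^1$. Using the definitions \eqref{eq:fhdef} and \eqref{eq:datosdisc} together with the operators $\widetilde{\mathbb{P}}_1,\widetilde{\mathbb{P}}_2$, these data rewrite as $P_h(0)=\widetilde{\mathbb{P}}_1 w^1$, $P_h'(0)=\widetilde{\mathbb{P}}_2\bigl(\lambda(0)f+w^0_{xx}-aw^0\bigr)$, and nonhomogeneous term $M_h\widetilde{\mathbb{P}}_2(\lambda' f)$. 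Thus $P_h$ is precisely a system of the form \eqref{eq.generalf} with data given by \eqref{eq:disdatain2}, associated to the continuous data $w^1\in H^1_0(0,1)$, $\lambda(0)f+w^0_{xx}-aw^0\in L^2(0,1)$ and source $\lambda' f\in L^2(0,T;L^2(0,1))$, where the required regularities follow from $w^0\in H^2\cap H^1_0$, $w^1\in H^1_0$, $a\in L^\infty$ and $\lambda\in H^1(0,T)$.

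Now I would invoke Theorem \ref{te:codernor} applied to $P_h$. Its hypotheses being satisfied, it yields $\frac{p_1(\,\cdot\,)}{h}\to p_x(0,\,\cdot\,)$ and $p_1'(\,\cdot\,)\to 0$ in $L^2(0,T)$, where $p$ is the unique continuous solution of \eqref{eq:varcont0} with the data identified above. By linearity and uniqueness of that solution, $p$ coincides with $v'+u=w'$, so $p_x(0,\,\cdot\,)=w_x'(\,\cdot\,,0)$. Consequently the first component of $Y_h$ converges to $w_x'(\,\cdot\,,0)$ and the second to $0$ in $L^2(0,T)$, i.e. $Y_h\to y$ in $[L^2(0,T)]^2$, which is exactly \eqref{eq:conv_nor_der_final}.

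The only genuinely delicate point is the identification of $P_h=V_h'+U_h$ as a bona fide discretization of $w'$: one must justify the time differentiation of \eqref{eq.us00}, compute $V_h''(0)$ from the equation at $t=0$, and verify that the emerging data match the projection operators in \eqref{eq:disdatain2}. The heavy analytic work, namely the hidden-regularity-type convergence of the discrete normal derivative, is entirely absorbed into Theorem \ref{te:codernor} and need not be repeated here.
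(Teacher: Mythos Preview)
Your argument is correct, but it proceeds along a different route from the paper. The paper keeps the two pieces of the decomposition separate: it applies Theorem \ref{te:codernor} directly to the homogeneous problem \eqref{eq.us000} for $U_h$, while for $V_h'$ it introduces the auxiliary homogeneous solution $Z_h$ of \eqref{eq.zh0} and uses the Volterra representation $V_h'(t)=\lambda(0)Z_h(t)+\int_0^t\lambda'(t-s)Z_h(s)\,{\rm d}s$ (mirroring the structure already used in Theorem \ref{te:estab0}); Theorem \ref{te:codernor} is then applied to $Z_h$, and the convergence is transferred to $V_h'$ through the integral formula and the analogous continuous identity for $v'$. By contrast, you combine everything at once into $P_h=V_h'+U_h$, check that $P_h$ is itself a solution of \eqref{eq.generalf} with data of the form \eqref{eq:disdatain2} (initial data $\widetilde{\mathbb P}_1 w^1$, $\widetilde{\mathbb P}_2(\lambda(0)f+w_{xx}^0-aw^0)$ and source $\widetilde{\mathbb P}_2(\lambda'f)$), and invoke Theorem \ref{te:codernor} a single time. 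Your route is slightly more economical and exploits the fact that Theorem \ref{te:codernor} already covers nonhomogeneous right-hand sides; the paper's route, on the other hand, only ever feeds homogeneous discrete problems into Theorem \ref{te:codernor} and reuses the Volterra machinery. One small remark: when you list the regularities needed, bear in mind that the Appendix (and in particular Theorem \ref{te:codernor}) works under $a\in C[0,1]$, not merely $a\in L^\infty$, so you should invoke that standing hypothesis rather than $a\in L^\infty$.
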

\begin{proof}Firstly, let us recall that the time derivative of the solution $w$ of equation \eqref{eq:inv_cont0} can be written as follows:
$$w'(t)=v'(t) +u(t),$$
where $v$ and $u$ are the solutions of  \eqref{eq:inv_cont01} and \eqref{eq:inv_cont02}, respectively. Consequently, the boundary observation of the continuous inverse problem \eqref{eq:inv_cont0} is given by  \begin{equation}\label{eq:obs_cont_inv} y(t)=\left(\begin{matrix} v_x'(t,0)+u_x(t,0)\\0\end{matrix}\right)\quad  (t\in(0,T)).\end{equation}  

Firstly, if  $U_h=\left[u_j\right]_{1\leq j\leq N}$ is the solution of \eqref{eq.us000}, from Theorem \ref{te:codernor} and \eqref{eq:convdatosdisc} we deduce that 
\begin{equation}\label{eq:convno1}
\begin{pmatrix}\smallskip\frac{1}{h}u_{1}(\,\cdot\, ) \\  \frac{1}{2}u_{1}'(\,\cdot\, )\end{pmatrix} \rightarrow \begin{pmatrix}\smallskip u_x(\cdot\, , 0) \\  0\end{pmatrix} \mbox{ in }[L^2(0,T)]^2.
\end{equation}

On the other hand, the solution $V_h=\left[v_j\right]_{1\leq j\leq N}$ of \eqref{eq.us00} verifies
\begin{equation}\label{eq:exp1}
V_h'(t)= \lambda(0) Z_h(t) +\int_0^t \lambda'(t-s) Z_{h}(s)\,{\rm d}s \qquad (t\in(0,T)),
\end{equation}
where $Z_h=\left[z_j\right]_{1\leq j\leq N}$ is the solution of  \eqref{eq.zh0}. By using again Theorem \ref{te:codernor} and \eqref{eq:convnonhomterm}, we deduce that 
\begin{equation}
\begin{pmatrix}\smallskip\frac{1}{h}z_{1}(\,\cdot\, ) \\  \frac{1}{2}z_{1}'(\,\cdot\, )\end{pmatrix} \rightarrow \begin{pmatrix}\smallskip z_x(\cdot\, , 0) \\  0\end{pmatrix} \mbox{ in }[L^2(0,T)]^2,\end{equation}
where $z$ is the solution of 
\begin{equation}\label{eq:duhamel}
\left\{ \begin{array}{ll} z''(t,x)- z_{xx}(t,x)+a(x) z(t,x)= 0&
t\in(0,T),\,\, x\in(0,1)\\  z(t,0)=  z(t,1)=0,\,\, & t\in(0,T)\\  z(0,x)=0,\,\,  z'(0,x)=f(x)& x\in(0,1).\end{array}\right.
\end{equation}
Since $$
v'(t)= \lambda(0) z(t) +\int_0^t \lambda'(t-s) z(s)\,{\rm d}s \qquad (t\in(0,T)),
$$ from \eqref{eq:exp1} we deduce that 
\begin{equation}\label{eq:convno2}
\begin{pmatrix}\smallskip\frac{1}{h}v'_{1}(\,\cdot\, ) \\  \frac{1}{2}v_{1}''(\,\cdot\, )\end{pmatrix} \rightarrow \begin{pmatrix}\smallskip v'_x(\cdot\, , 0) \\  0\end{pmatrix} \mbox{ in }[L^2(0,T)]^2.
\end{equation}
The convergence \eqref{eq:conv_nor_der_final}  follows from \eqref{eq:convno1} and \eqref{eq:convno2}.
\end{proof}

\end{document}